\documentclass[10pt, en]{article}
\usepackage[margin=1.5cm]{geometry}                		
\geometry{a4paper}    
 
\DeclareMathSizes{10}{9}{7}{5}

\usepackage[table]{xcolor} 		

\usepackage{graphicx}	
\usepackage{authblk}
\usepackage{abstract}

\usepackage{amssymb, amsthm, bbold, amsmath} 
\usepackage{booktabs}
\usepackage{enumitem}

\usepackage[font=large]{subcaption}
\captionsetup[subfigure]{labelformat=simple,justification=centerlast}

\usepackage{colortbl}

\newcommand{\bx}{\bold{x}}
\newcommand{\bdx}{\delta{\bold{x}}}
\newcommand{\bL}{\bold{L}}

\newtheorem{theorem}{Theorem}
\newtheorem{lemma}{Lemma}
\newtheorem{cor}{Corollary}

\title{Spectral estimates for saddle point matrices arising in weak constraint four-dimensional variational data assimilation}

\author[1]{\normalsize Ieva Dau\v{z}ickait\.{e}*}

\author[1,2]{\normalsize Amos S. Lawless}

\author[1,3]{\normalsize Jennifer A. Scott}

\author[1,2,4]{\normalsize Peter Jan van Leeuwen}


\affil[1]{\footnotesize School Of Mathematical, Physical and Computational Sciences, University of Reading,UK}

\affil[2]{ \footnotesize National Centre for Earth Observation, Reading, UK}

\affil[3]{\footnotesize Scientific Computing Department, STFC Rutherford Appleton Laboratory, UK}

\affil[4]{\footnotesize Department of Atmospheric Science, Colorado State University, USA}

\date{}

\begin{document}

\maketitle

\begin{abstract}
We consider the large-sparse symmetric linear systems of equations that arise in the solution of weak constraint four-dimensional variational data assimilation, a method of high interest for numerical weather prediction. These systems can be written as saddle point systems with a $3 \times 3$ block structure but block eliminations can be performed to reduce them to saddle point systems with a $2 \times 2$ block structure, or further to symmetric positive definite systems. In this paper, we analyse how sensitive the spectra of these matrices are to the number of observations of the underlying dynamical system. We also obtain bounds on the eigenvalues of the matrices. Numerical experiments are used to confirm the theoretical analysis and bounds.
\vspace{\baselineskip}

\noindent {\bf Keywords:} data assimilation, saddle point systems, spectral estimates, weak constraint 4D-Var, sparse linear systems.
\end{abstract}

\let\thefootnote\relax\footnotetext{*Correspondence: Ieva Dau\v{z}ickait\.{e}, Department of Mathematics and Statistics, University of Reading, PO Box 220, Reading RG6 6AX, UK. Email: i.dauzickaite@pgr.reading.ac.uk}

\section{Introduction}\label{sec1}
Data assimilation estimates the state of a dynamical system by combining observations of the system with a prior estimate. The latter is called a background state and it is usually an output of a numerical model that simulates the dynamics of the system. The impact that the observations and the background state have on the state estimate depends on their errors whose statistical properties we assume are known. Data assimilation is used to produce initial conditions in numerical weather prediction (NWP) \cite{Kalnay2002,Swinbank2010}, as well as other areas, e.g. flood forecasting \cite{CHEN2013}, research into atmospheric composition \cite{Elbern1997}, and neuroscience \cite{Moye2018}. In operational applications, the process is made more challenging by the size of the system, e.g. the numerical model may be operating on $10^8$ state variables and $10^5 - 10^6$ observations may be incorporated\cite{Nichols2010,Lawless2013}. Moreover, there is usually a constraint on the time that can be spent on calculations. 

The solution, called the analysis, is obtained by combining the observations and the background state in an optimal way. One approach is to solve a weighted least-squares problem, which requires minimising a cost function. An active research topic in this area is the weak constraint four-dimensional variational (4D-Var) data assimilation method\cite{Tremolet06,Tremolet07,AES_thesis,Bonavita2017,Fisher17,Gratton2018,Freitag2018}. It is employed in the search for states of the system over a time period, called the assimilation window. This method uses a cost function that is formulated under the assumption that the numerical model is not perfect and penalises the weighted discrepancy between the analysis and the observations, the analysis and the background state, and the difference between the analysis and the trajectory given by integrating the dynamical model.

Effective minimisation techniques need evaluations of the cost function and its gradient that involve expensive operations with the dynamical model and its linearised variant. Such approaches are impractical in operational applications.One way to approximate the minimum of the weak constraint 4D-Var is to use an inexact Gauss-Newton method \cite{Gratton2007}, in which a series of linearised quadratic cost functions with a low resolution model are minimised \cite{Courtier1994}, and the minima are used to update the high resolution state estimate. The state estimate update is found by solving sparse symmetric linear systems using an iterative method \cite{SaadBook}.

To increase the potential of using parallel computations when computing the state update with weak constraint 4D-Var, Fisher and G{\"u}rol \cite{Fisher17} introduced a symmetric $3 \times 3$ block saddle point formulation. These resulting large symmetric linear systems are solved using Krylov subspace solvers\cite{Freitag2018,SaadBook,Benzi2005}. One criteria that affects their convergence is the spectra of the coefficient matrices\cite{Benzi2005}. We derive bounds for the eigenvalues of the $3 \times 3$ block matrix using the work of Rusten and Winther\cite{RustenWinther1992}. Also, inspired by the practice in solving saddle point systems that arise from interior point methods \cite{Greif2014,Morini2017}, we reduce the $3\times 3$ block system to a $2 \times 2$ block saddle point formulation and derive eigenvalue bounds for this system. We also consider a $1 \times 1$ block formulation with a positive definite coefficient matrix, which corresponds to the standard method\cite{Tremolet06,Tremolet07}. Some of the blocks in the $3 \times 3$ and $2 \times 2$ block saddle point coefficient matrices, and the $1 \times 1$ block positive definite coefficient matrix depend on the available observations of the dynamical system. We present a novel examination of how adding new observations influences the spectrum of these coefficient matrices. 

In Section \ref{sec:varDA}, we formulate the data assimilation problem and introduce weak constraint 4D-Var with the $3\times 3$ block and $2 \times 2$ block saddle point formulations and the $1 \times 1$ block symmetric positive definite formulation. Eigenvalue bounds for the saddle point and positive definite matrices and results on how the extreme eigenvalues and the bounds depend on the number of observations are presented in Section \ref{sec:eigenvalues_theory}. Section \ref{sec:numerics} illustrates the theoretical considerations using numerical examples, and concluding remarks and future directions are presented in Section \ref{sec:conclusions}.

\section{Variational Data Assimilation}\label{sec:varDA}
The state of the dynamical system of interest at times $t_0 < t_1 < ... <t_N$ is represented by the state vectors $x_0, x_1, \dots, x_N$ with $x_i \in \mathbb{R}^n$. A nonlinear model $m_i$ that is assumed to have errors describes the transition from the state at time $t_i$ to the state at time $t_{i+1}$, i.e.
\begin{equation}\label{eq:state_evolution}
x_{i+1} = m_i (x_i) + \eta_{i+1},
\end{equation}
where $\eta_i$ represents the model error at time $t_i$. It is further assumed that the model errors are Gaussian with zero mean and covariance matrix $Q_i \in \mathbb{R}^{n \times n}$, and that they are uncorrelated in time, i.e. there is no relationship between the model errors at different times. In NWP, the model comes from the discretization of the partial differential equations that describe the flow and thermodynamics of a stratified multiphase fluid in interaction with radiation\cite{Kalnay2002}. It also involves parameters that characterize processes arising at spatial scales that are smaller than the distance between the grid points\cite{Rood2010}. Errors due to the discretization of the equations, errors in the boundary conditions, inaccurate parameters etc. are components of the model error\cite{Griffith2000}.

The background information about the state at time $t_0$ is denoted by $x^b \in \mathbb{R}^n$. $x^b$ usually comes from a previous short range forecast and is chosen to be the first guess of the state. It is assumed that the background term has errors that are Gaussian with zero mean and covariance matrix $B \in \mathbb{R}^{n \times n}$. 

Observations of the dynamical system at time $t_i$ are given by $y_i  \in \mathbb{R}^{p_i}$. In NWP, there are considerably fewer observations than state variables, i.e. $p_i << n$. Also, there may be indirect observations of the variables in the state vector and a comparison is obtained by mapping the state variables to the observation space using a nonlinear operator $\mathcal{H}_i$. For example, satellite observations used in NWP provide top of the atmosphere radiance data, whereas the model operates on different meteorological variables, e.g. temperature, pressure, wind etc. \cite{Andersson2010}
Hence, values of meteorological variables are transformed into top of the atmosphere radiances in order to compare the model output with the observations. In this case, the operator $\mathcal{H}_i$ is nonlinear and complicated. Approximations made when mapping the state variables to the observation space,
different spatial and temporal scales between the model and some observations (observations may give information at a finer resolution than the model), and pre-processing, or quality control, of the observations (see, e.g. Section 5.8 of Kalnay\cite{Kalnay2002}) comprise the representativity error\cite{Janjic2018}. The observation error is made up of the representativity error combined with the error arising due to the limited precision of the measurements. It is assumed to be Gaussian with zero mean and covariance matrix $R_i \in \mathbb{R}^{p_i \times p_i}$. The observation errors are assumed to be uncorrelated in time \cite{Lawless2013}.

\subsection{Weak constraint 4D-Var}
In weak constraint 4D-Var, the analysis $x^a_0, x^a_1, \dots, x^a_N$ is obtained by minimising the following nonlinear cost function
\begin{align}
J(x_0, x_1, \dots, x_N) & =  \frac{1}{2} (x_0 - x^b)^T B^{-1} (x_0 - x^b) + \frac{1}{2} \sum_{i=0}^{N} (y_i - \mathcal{H}_i (x_i))^T R_i^{-1} (y_i - \mathcal{H}_i (x_i)) \label{eq:4D-var_x} \\ \nonumber
& + \frac{1}{2} \sum_{i=0}^{N-1} (x_{i+1} - m_i (x_i))^T Q_{i+1}^{-1} (x_{i+1} - m_i (x_i)).
\end{align}
This cost function is referred to as the state control variable formulation. Here the control variables are defined as the variables with respect to which the cost function is minimised, i.e. $x_0, x_1, \dots, x_N$ in \eqref{eq:4D-var_x}. Choosing different control variables and obtaining different formulations of the cost function is possible \cite{Tremolet06}. If the model is assumed to have no errors (i.e. $x_{i+1} = m_i (x_i)$), the cost function simplifies as the last term in \eqref{eq:4D-var_x} is removed; this is called strong constraint 4D-Var. Rejecting this perfect model assumption and using weak constraint 4D-Var may lead to a better analysis \cite{Tremolet07}. 

Iterative gradient-based optimisation methods are used in practical data assimilation \cite{Talagrand2010,Lawless2013}. These require the cost function and its gradient to be evaluated at every iteration. In operational applications, integrating the model over the assimilation window to evaluate the cost function is computationally expensive. The gradient is obtained by the adjoint method (see, e.g., Section 2 of Lawless\cite{Lawless2013} and Section 3.2 of Talagrand\cite{Talagrand2010} for an introduction), which is a few times more computationally expensive than evaluating the cost function. This makes the minimisation of the nonlinear weak-constraint 4D-Var cost function impractical. Hence, approximations have to be made. We introduce such an approach in the next section.

\subsection{Incremental formulation}\label{sec:incremental}
Minimisation of the 4D-Var cost function in an operational setting is made feasible by employing an iterative Gauss-Newton method, as first proposed by Courtier et al. \cite{Courtier1994} for the strong constraint 4D-Var. In this approach, the solution of the weak constraint 4D-Var is approximated by solving a sequence of linearised problems, i.e. the $l$-th approximation of the state is 
\begin{equation}\label{eq:state_approx}
x_i^{(l+1)} = x_i^{(l)} + \delta x_i^{(l)},\quad i \in \{0,1,\dots,N\},
\end{equation}
where $\delta x_i^{(l)}$ is obtained as the minimiser of the linearised cost function 
\begin{align}\label{eq:incr_state}
J^{\delta} (\delta x_0^{(l)}, \delta x_1^{(l)}, \dots, \delta x_N^{(l)}) & =  (\delta x_0^{(l)} - b^{(l)})^T B^{-1} (\delta x_0^{(l)} - b^{(l)}) \\ \nonumber
& + \frac{1}{2} \sum_{i=0}^{N} (H_i^{(l)} \delta x_i^{(l)} - d_i^{(l)})^T R_i^{-1} (H_i^{(l)} \delta x_i^{(l)} - d_i^{(l)}) \\ \nonumber
& + \frac{1}{2} \sum_{i=0}^{N-1} (M_i^{(l)} \delta x_i^{(l)} - \delta x_{i+1}^{(l)} - \eta_{i+1}^{(l)})^T Q_{i+1}^{-1} (M_i^{(l)} \delta x_i^{(l)} - \delta x_{i+1}^{(l)} - \eta_{i+1}^{(l)}), 
\end{align}
where $b^{(l)} = x_0 ^{(l)}- x^b$, $d_i^{(l)} = y_i - \mathcal{H}_i (x_i^{(l)})$,  $\eta_i^{(l)} = x_i^{(l)} - m_{i-1} (x_{i-1}^{(l)})$ (as in \eqref{eq:state_evolution}) and $M_i^{(l)}$ and $H_i^{(l)}$ are the model $m_i$ and the observation operator $\mathcal{H}_i$, respectively, linearised at $x^{(l)}_i$. Minimisation of \eqref{eq:incr_state} is called the inner loop. The $l$-th outer loop consists of updating the approximation of the state \eqref{eq:state_approx}, linearising the model $m_i$ and the observation operator $\mathcal{H}_i$, and computing the values $b^{(l)}$, $d_i^{(l)}$ and $\eta_i^{(l)}$. This cost function is quadratic, which allows the use of effective minimisation techniques, such as conjugate gradients (see Chapter 5 of Nocedal and Wright\cite{Nocedal06}). In NWP, the computational cost of minimising the 4D-Var cost function is reduced by using a version of the inner loop cost function that is defined for a model with lower spatial resolution, i.e. on a coarser grid \cite{Fisher98}. We do not consider such an approach in the subsequent work, because our results on the change of the spectra of the coefficient matrices and the bounds (that are introduced in the following section) hold for models with any spatial resolution.

For ease of notation, we introduce the following four-dimensional (in the sense that they contain information in space and time) vectors and matrices. These vectors and matrices are indicated in bold.

\[ \bx^{(l)} = \left( \begin{array}{c}
x_0^{(l)} \\
x_1^{(l)} \\
\vdots \\
x_N^{(l)}
\end{array} \right), \ 
\delta \bx^{(l)} = \left( \begin{array}{c}
\delta x_0^{(l)} \\
\delta x_1^{(l)} \\
\vdots \\
\delta x_N^{(l)}
\end{array} \right),
 \bold{b}^{(l)} = \left( \begin{array}{c}
b^{(l)}\\
- \eta_1^{(l)} \\
\vdots \\
-\eta_N^{(l)}
\end{array} \right), \ 
\bold{d}^{(l)} = \left( \begin{array}{c}
y_0  - \mathcal{H}_0 (x_0^{(l)})\\
y_1  - \mathcal{H}_1 (x_1^{(l)}) \\
\vdots \\
y_N  - \mathcal{H}_N (x_N^{(l)}) 
\end{array} \right),
\]
where $\bx^{(l)}, \delta \bx^{(l)}, \bold{b}^{(l)} \in \mathbb{R}^{(N+1)n}$ and $\bold{d}^{(l)}  \in \mathbb{R}^p, p = \Sigma_{i=0}^{N} p_i $. We also define the matrices
\[ \bL^{(l)} = \left( \begin{array}{ccccc}
I         &             &            &                    & \\
-M_0^{(l)} &     I       &            &                    &  \\
          &   -M_1^{(l)}&       I     &                    &  \\
         &             & \ddots & \ddots        &  \\
         &            &              & -M_{N-1}^{(l)}   & I  \\
\end{array} \right),
\qquad
\bold{H}^{(l)} = \left( \begin{array}{cccc}
H_0^{(l)}                &             &              & \\
          &     H_1^{(l)}              &              &  \\
          &            &             \ddots  &  \\
          &            &            &               H_N^{(l)} \\
  \end{array} \right),
\]
where $I\in \mathbb{R}^{n \times n}$ is the identity matrix, $\bL^{(l)} \in \mathbb{R}^{(N+1)n \times (N+1)n}$ and $\bold{H}^{(l)} \in \mathbb{R}^{p \times (N+1)n }$.  
We define the block diagonal covariance matrices 
\[ \bold{D} = \left( \begin{array}{cccc}
B                  &             &              & \\
          &     Q_1              &              &  \\

          &            &             \ddots  &  \\
          &            &            &               Q_N  \\
\end{array} \right) \quad \text{and} \quad
\bold{R} = \left( \begin{array}{cccc}
R_0                   &             &              & \\
          &     R_1              &              &  \\
          &            &             \ddots  &  \\
          &            &            &               R_N  \\
\end{array} \right),
\]
$\bold{D} \in \mathbb{R}^{(N+1)n \times (N+1)n}$ and $ \bold{R} \in \mathbb{R}^{p \times p}$.
The state update \eqref{eq:state_approx} may then be written as
\begin{equation*}\label{eq:update_4dform}
\bx^{(l+1)} = \bx^{(l)} + \delta \bx^{(l)},
\end{equation*}  
and the quadratic cost function \eqref{eq:incr_state} becomes 
\begin{equation}\label{eq:incr_wc4d-var_state}
J^{\delta} (\delta \bx^{(l)}) = \frac{1}{2} || \bL^{(l)} \bdx^{(l)} - \bold{b}^{(l)} ||^2_{\bold{D}^{-1}} + \frac{1}{2} || \bold{H}^{(l)} \bdx^{(l)} - \bold{d}^{(l)} ||^2_{\bold{R}^{-1}},
\end{equation}
where $||\bold{a}||^2_{\bold{A}^{-1}}=\bold{a}^T\bold{A}^{-1}\bold{a}$.
We omit the superscript $(l)$ for the outer iteration in the subsequent discussions. The minimum of \eqref{eq:incr_wc4d-var_state} can be found by solving linear systems. We discuss different formulations of these in the next three subsections.

\subsubsection{$\bold{3 \times 3}$ block saddle point formulation}\label{sec:3x3}
In pursuance of exploiting parallel computations in data assimilation, Fisher and G{\"u}rol \cite{Fisher17} proposed obtaining the state increment $\bdx$ by solving a saddle point system (see also Freitag and Green\cite{Freitag2018}). New variables are introduced
\begin{align}
\boldsymbol{\lambda} & = \bold{D}^{-1} (\bold{b} - \bL \bdx) \ \in \mathbb{R}^{(N+1)n}, \label{lambda_var}\\  
\boldsymbol{\mu} & = \bold{R}^{-1} (\bold{d} - \bold{H}\bdx) \ \in \mathbb{R}^p. \label{mu_var}
\end{align}
The gradient of the cost function \eqref{eq:incr_wc4d-var_state} with respect to $\bdx$ provides the optimality constraint 
\begin{align}
\bold{0} = & \bL^T \bold{D}^{-1} (\bL \bdx - \bold{b}) + \bold{H}^T \bold{R}^{-1} ( \bold{H} \bdx- \bold{d})  \nonumber \\
=& -(\bL^T \boldsymbol{\lambda} + \bold{H}^T \boldsymbol{\mu}). \label{opt_cond}
\end{align}
Multiplying \eqref{lambda_var} by $\bold{D}$ and \eqref{mu_var} by $\bold{R}$ together with \eqref{opt_cond}, yields a coupled linear system of equations:

\begin{equation}\label{eq:saddle}
 \mathcal{A}_3  \left( \begin{array}{c}
\boldsymbol{\lambda} \\
\boldsymbol{\mu} \\
\bdx \\
\end{array} \right)
= \left( \begin{array}{c}
\bold{b} \\
\bold{d} \\
\bold{0} \\
\end{array} \right), 
\end{equation}
where the coefficient matrix is given by
\begin{equation}\label{eq:saddle_mtrx}
\mathcal{A}_3  = \left( \begin{array}{ccc}
\bold{D} & \bold{0} & \bold{L} \\
\bold{0} & \bold{R} & \bold{H} \\
\bold{L}^T & \bold{H}^T & \bold{0} \\
\end{array} \right) \in \mathbb{R}^{(2(N+1)n + p) \times (2(N+1)n + p) }. 
\end{equation}

$\mathcal{A}_3$ is a sparse symmetric indefinite saddle point matrix that has a $3 \times 3$ block form. Such systems are explored in the optimization literature \cite{Greif2014,Morini2016,Morini2017}. When solving these systems iteratively, it is usually assumed that calculations involving the blocks on the diagonal are computationally expensive, while the off-diagonal blocks are cheap to apply and easily approximated. However, in our application, operations with the diagonal blocks are relatively cheap and the off-diagonal blocks are computationally expensive, particularly because of the integrations of the model and its adjoint in $\bold{L}$ and $\bold{L}^T$.  

Recall that the sizes of the blocks $\bold{R}$, $\bold{H}$ and $\bold{H}^T$ depend on the number of observations $p$. Thus, the size of $\mathcal{A}_3$ and possibly some of its characteristics are also affected by $p$. The saddle point systems that arise in different outer loops vary in the right hand sides and the linearisation states of $\bold{L}$ and $\bold{H}$. 

Because of the memory requirements of sparse direct solvers, they cannot be used to solve the $3 \times 3$ block saddle point systems that arise in an operational setting. Iterative solvers (such as MINRES, SYMMLQ \cite{Paige1975}, GMRES \cite{Saad1986}) need to be used. These Krylov subspace methods require matrix-vector products with $\mathcal{A}_3$ at each iteration. Note that the matrix-vector product $\mathcal{A}_3 \bold{q}$,  $\bold{q}^T=(q_1^T, q_2^T, q_3^T), \  q_1, q_3 \in \mathbb{R}^{(N+1)n}, q_2 \in \mathbb{R}^p$, involves multiplying $\bold{D}$ and $\bold{L}^T$ by $q_1$, $\bold{R}$ and $\bold{H}^T$ by $q_2$, and $\bold{L}$ and  $\bold{H}$ by $q_3$. These matrix-vector products may be performed in parallel. Furthermore, multiplication of each component of each block matrix with the respective part of the vector $q_i$ can be performed in parallel. The possibility of multiplying a vector with the blocks in $\bold{L}$ and $\bold{L}^T$ in parallel is particularly attractive, because the expensive operations involving the linearised model $M_i$ and its adjoint $M^T_i$ can be done at the same time for every $i \in \{0, 1, \dots, N-1 \}$.

\subsubsection{$\bold{2 \times 2}$ block saddle point formulation}\label{sec:2x2}
The saddle point systems with $3 \times3$ block coefficient matrices that arise from interior point methods are often reduced to $2 \times 2$ block systems \cite{Greif2014,Morini2017}. The $2 \times 2$ block formulation has not been used in data assimilation before. Because of its smaller size, it may be advantageous. Therefore, we now explore using this approach in the weak constraint 4D-Var setting.

Multiplying equation \eqref{lambda_var} by $\bold{D}$ and eliminating $\boldsymbol{\mu}$ in \eqref{opt_cond} gives the following equivalent system of equations
\begin{equation}\label{2block_system}
\mathcal{A}_2 \left( \begin{array}{c}
\boldsymbol{\lambda} \\
\bdx \\
\end{array} \right)
= \left( \begin{array}{c}
\bold{b} \\
-\bold{H}^T  \bold{R}^{-1} \bold{d} \\
\end{array} \right), 
\end{equation}
where
\begin{equation}\label{2block_mtrx}
\mathcal{A}_2 =  \left( \begin{array}{cc}
\bold{D}  & \bL \\
\bL^T & -\bold{H}^T  \bold{R}^{-1} \bold{H}
\end{array} \right) \in \mathbb{R}^{2(N+1)n \times 2(N+1)n }.
\end{equation}

The reduced matrix $\mathcal{A}_2 $ is a sparse symmetric indefinite saddle point matrix with a $2 \times 2$ block form. Unlike the $3 \times 3$ block matrix $\mathcal{A}_3$, its size is independent of the number of observations. 
$\mathcal{A}_2 $ involves the matrix $\bold{R}^{-1}$, which is usually available in data assimilation applications. The computationally most expensive blocks $\bL$ and $\bL^T$ are again the off-diagonal blocks.

Solving \eqref{2block_system} in parallel might be less appealing compared to solving \eqref{eq:saddle} in parallel: for a Krylov subspace method, the $(2,2)$ block $-\bold{H}^T  \bold{R}^{-1} \bold{H}$ need not be formed separately, i.e. only operators to perform the matrix-vector products with $\bold{H}^T$, $\bold{R}^{-1}$ and $\bold{H}$ need to be stored. Hence, a matrix-vector product $\mathcal{A}_2 \bold{q}$,  $\bold{q}^T=(q_1^T, q_3^T), \  q_1, q_3 \in \mathbb{R}^{(N+1)n}$, requires multiplying $\bold{D}$ and $\bold{L}^T$ by $q_1$, $\bold{L}$ and $\bold{H}$ by $q_3$ (which may be done in parallel) and subsequently $\bold{R}^{-1}$ by $\bold{H} q_3$, followed by $-\bold{H}^T$ by $\bold{R}^{-1} \bold{H} q_3$. Hence, the cost of matrix-vector products for the $3 \times 3$ and $2 \times 2$ block formulations differs in that the former needs matrix-vector products with $\bold{R}$ while the latter requires products with $\bold{R}^{-1}$, and the $2 \times 2$ block formulation requires some sequential calculations. However, notice that the expensive calculations that involve applying the operators $\bL$ and $\bL^T$ (the linearised model and its adjoint) can still be performed in parallel.

\subsubsection{$\bold{1 \times 1}$ block formulation}\label{sec:1x1}
The $2 \times 2$ block system can be further reduced to a $1 \times 1$ block system, that is, to the standard formulation (see, e.g., Tr{\'{e}}molet \cite{Tremolet06} and A. El-Said \cite{AES_thesis} for a more detailed consideration):
\begin{equation}\label{eq:1block_system}
(\bL^T \bold{D}^{-1} \bL + \bold{H}^T  \bold{R}^{-1} \bold{H}) \bdx = \bL^T \bold{D}^{-1} \bold{b}  + \bold{H}^T  \bold{R}^{-1} \bold{d}. 
\end{equation}
Observe that the coefficient matrix 
\begin{align}\label{1block_mtrx}
\mathcal{A}_1  = & \  \bL^T \bold{D}^{-1} \bL + \bold{H}^T  \bold{R}^{-1} \bold{H} \\
					  = &\  (\bL^T \quad \bold{H}^T ) 
					  		\left( \begin{array}{cc}
							\bold{D}^{-1}  & \bold{0} \\ 
 							\bold{0} &  \bold{R}^{-1}
						   	\end{array} \right)  
						   	\left( \begin{array}{c}
							\bL \\
							\bold{H}
							\end{array} \right) \nonumber
\end{align}
is the negative Schur complement of $\left( \begin{array}{cc}
\bold{D} & \bold{0} \\
\bold{0} & \bold{R}\\
\end{array} \right) $ in $\mathcal{A}_3$. The matrix $\mathcal{A}_1$ is block tridiagonal and symmetric positive definite, hence the conjugate gradient method by Hestenes and Stiefel \cite{Hestenes52} can be used. The computations with the linearised model in $\bold{L}$ at every time step can again be performed in parallel. However, the adjoint of the linearised model in $\bold{L}^T$ can only be applied after the computations with the model are finished, thus limiting the potential for parallelism. 

\section{Eigenvalues of the saddle point formulations}\label{sec:eigenvalues_theory}
One factor that influences the rate of convergence of Krylov subspace iterative solvers for symmetric systems is the spectrum of the coefficient matrix (see, for example, Section 9 in the survey paper\cite{Benzi2005} and Lectures 35 and 38 in the textbook\cite{TrefethenBau} for a discussion). Simoncini and Szyld \cite{SimonciniSzyld2013} have shown that any eigenvalues of the saddle point systems that lie close to zero can cause the iterative solver MINRES to stagnate for a number of iterations while the rate of convergence can improve if the eigenvalues are clustered. 

In the following, we examine how the eigenvalues of the block matrices $\mathcal{A}_3$, $\mathcal{A}_2$, and $\mathcal{A}_1$ change when new observations are added. This is done by considering the shift in the extreme eigenvalues of these matrices, that is the smallest and largest positive and negative eigenvalues. We then present bounds for the eigenvalues of these matrices. 

\subsection{Preliminaries}
In order to determine how changing the number of observations influences the spectra of $\mathcal{A}_3$, $\mathcal{A}_2$, and $\mathcal{A}_1$, we explore the extreme singular values and eigenvalues of some blocks in $\mathcal{A}_3$, $\mathcal{A}_2$ and $\mathcal{A}_1$. We state two theorems that we will require. Here we employ the notation $\lambda_k(A)$ to denote the $k$-th largest eigenvalue of a matrix $A$ and subscripts $min$ and $max$ are used to denote the smallest and largest eigenvalues, respectively.

\begin{theorem}[See Section 8.1.2 of Golub and Van Loan\cite{Golub13}]\label{th:eig_bounds}
If $A$ and $C$ are $n \times n$ Hermitian matrices, then 
\begin{center}
$\lambda_k(A) + \lambda_{min}(C) \leq \lambda_k(A+C) \leq \lambda_k(A) + \lambda_{max}(C),\quad k \in \{ 1,2,\dots, n\}$.
\end{center}
\end{theorem}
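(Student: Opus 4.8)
The statement is a form of Weyl's inequality for Hermitian perturbations, and the natural route is through the Courant--Fischer min--max characterization of eigenvalues. Recall that for an $n \times n$ Hermitian matrix $A$ with eigenvalues ordered as $\lambda_1(A) \geq \lambda_2(A) \geq \cdots \geq \lambda_n(A)$,
\[
\lambda_k(A) = \max_{\substack{S \subseteq \mathbb{C}^n \\ \dim S = k}} \ \min_{\substack{x \in S \\ x \neq 0}} \frac{x^* A x}{x^* x},
\]
the maximum being over all $k$-dimensional subspaces. I would take this identity as given (it is itself standard: the lower estimate comes from choosing $S$ to be the span of eigenvectors for $\lambda_1(A),\dots,\lambda_k(A)$, and the upper estimate from intersecting an arbitrary $k$-dimensional $S$ with the $(n-k+1)$-dimensional span of the eigenvectors for $\lambda_k(A),\dots,\lambda_n(A)$, the intersection being nonzero by a dimension count).

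For the upper bound, fix $k$ and let $S$ be an arbitrary $k$-dimensional subspace of $\mathbb{C}^n$. Since $C$ is Hermitian, $x^* C x \leq \lambda_{max}(C)\, x^* x$ for every $x$, so for every nonzero $x \in S$
\[
\frac{x^*(A+C)x}{x^* x} = \frac{x^* A x}{x^* x} + \frac{x^* C x}{x^* x} \leq \frac{x^* A x}{x^* x} + \lambda_{max}(C).
\]
Minimizing over nonzero $x \in S$ and then maximizing over all $k$-dimensional $S$ gives $\lambda_k(A+C) \leq \lambda_k(A) + \lambda_{max}(C)$.

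For the lower bound I would simply apply the inequality just established to the pair of matrices $A+C$ and $-C$: noting that $\lambda_{max}(-C) = -\lambda_{min}(C)$,
\[
\lambda_k(A) = \lambda_k\big((A+C)+(-C)\big) \leq \lambda_k(A+C) + \lambda_{max}(-C) = \lambda_k(A+C) - \lambda_{min}(C),
\]
which rearranges to $\lambda_k(A) + \lambda_{min}(C) \leq \lambda_k(A+C)$. (Alternatively, one repeats the Rayleigh-quotient estimate using $x^* C x \geq \lambda_{min}(C)\, x^* x$.)

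The one place to be careful is the Courant--Fischer identity itself, specifically getting the order of $\max$ and $\min$ right so that, after splitting the Rayleigh quotient, the bound on $x^* C x$ pushes the inequality in the intended direction; everything else is a two-line estimate on Rayleigh quotients. No compactness or analytic perturbation theory is needed, since in finite dimensions the relevant extrema are attained.
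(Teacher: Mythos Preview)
Your proof is correct and is the standard argument for Weyl's inequality via the Courant--Fischer min--max characterization; the substitution $(A+C,-C)$ for the lower bound is a clean way to avoid repeating the Rayleigh-quotient estimate. Note, however, that the paper does not actually prove this statement: it is quoted as a known result with a citation to Section~8.1.2 of Golub and Van Loan, so there is no proof in the paper to compare against.
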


\begin{theorem}[Cauchy's Interlace Theorem, see Theorem 4.2 in Chapter 4 of Stewart and Sun\cite{Stewart1990}]\label{th:cauchys_interlace}
If $A$ is an $n \times n$ Hermitian matrix and $C$ is a $(n-1) \times (n-1)$ principal submatrix of $A$ (a matrix obtained by eliminating a row and a corresponding column of $A$), then 
\begin{center}
$\lambda_n(A) \leq \lambda_{n-1} (C) \leq \lambda_{n-1}(A) \leq \dots \leq \lambda_2(A) \leq \lambda_1(C) \leq \lambda_1(A)$. 
\end{center}
\end{theorem}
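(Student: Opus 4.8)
The plan is to obtain both interlacing chains from the Courant--Fischer min--max characterisation of the eigenvalues of a Hermitian matrix, together with a routine lifting of subspaces. First I would put the problem in a convenient normal form: the eigenvalues of $A$ are unchanged under a symmetric permutation $A \mapsto \Pi^T A \Pi$, and deleting ``a row and the corresponding column'' is exactly a symmetric deletion, so after choosing $\Pi$ to move the deleted index to the last position it suffices to treat
\[
A = \begin{pmatrix} C & b \\ b^* & a \end{pmatrix}, \qquad C = C^* \in \mathbb{C}^{(n-1)\times(n-1)}, \quad b \in \mathbb{C}^{n-1}, \quad a \in \mathbb{R}.
\]
For $x \in \mathbb{C}^{n-1}$ write $\hat x \in \mathbb{C}^n$ for the vector obtained by appending a zero last entry. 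The one elementary fact that drives the whole argument is that $\hat x^* A \hat x = x^* C x$ and $\hat x^* \hat x = x^* x$, so $x \mapsto \hat x$ carries every $k$-dimensional subspace $S \subseteq \mathbb{C}^{n-1}$ to a $k$-dimensional subspace $\hat S \subseteq \mathbb{C}^n$ on which the Rayleigh quotient of $A$ agrees with that of $C$ on $S$.

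Next I would prove the two families of inequalities separately, for $k \in \{1,\dots,n-1\}$. For the upper bound $\lambda_k(C) \le \lambda_k(A)$ I would use the max--min form: if $S$ attains the maximum in $\lambda_k(C) = \max_{\dim S = k}\min_{0\ne x\in S}(x^*Cx)/(x^*x)$, then $\hat S$ is an admissible $k$-dimensional competitor in $\lambda_k(A) = \max_{\dim T = k}\min_{0\ne y\in T}(y^*Ay)/(y^*y)$ on which the objective equals $\lambda_k(C)$, whence $\lambda_k(A) \ge \lambda_k(C)$. For the lower bound $\lambda_{k+1}(A) \le \lambda_k(C)$ I would instead use the min--max form: $\lambda_{k+1}(A) = \min_{\dim T = n-k}\max_{0\ne y\in T}(y^*Ay)/(y^*y)$ and $\lambda_k(C) = \min_{\dim S = n-k}\max_{0\ne x\in S}(x^*Cx)/(x^*x)$ (using $(n-1)-k+1 = n-k$); lifting a minimising $(n-k)$-dimensional $S$ gives an admissible $T = \hat S$ for $\lambda_{k+1}(A)$ with the same maximum, so $\lambda_{k+1}(A) \le \lambda_k(C)$. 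Concatenating $\lambda_{k+1}(A)\le\lambda_k(C)\le\lambda_k(A)$ for $k=1,\dots,n-1$ telescopes into exactly $\lambda_n(A)\le\lambda_{n-1}(C)\le\lambda_{n-1}(A)\le\dots\le\lambda_1(C)\le\lambda_1(A)$.

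I do not expect a real obstacle: the proof is essentially bookkeeping once Courant--Fischer is invoked, and the only place to slip is matching the subspace dimensions across the \emph{two} (max--min and min--max) forms so that the index shift comes out as $\lambda_{k+1}(A)$ versus $\lambda_k(C)$ rather than the other way around; the reduction to deleting the last coordinate is harmless. As a fallback I would keep the determinantal argument in reserve: when $C$ has simple eigenvalues $\mu_1 > \dots > \mu_{n-1}$, a Schur-complement expansion gives $\det(\lambda I - A) = \bigl(\prod_i (\lambda - \mu_i)\bigr)\bigl(\lambda - a - \sum_i |\tilde b_i|^2/(\lambda - \mu_i)\bigr)$ with $\tilde b$ the coordinates of $b$ in an eigenbasis of $C$, and a sign-change analysis of the secular function on each interval $(\mu_{i+1},\mu_i)$ and on the two unbounded intervals forces one root of $\det(\lambda I - A)$ in each, giving strict interlacing; the general, possibly degenerate, case then follows by continuity of eigenvalues under perturbation. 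This route needs that extra density step, so the Courant--Fischer proof is the one I would write out in full.
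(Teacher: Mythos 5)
Your proof is correct: both Courant--Fischer inequalities are established with the right dimension bookkeeping (the max--min step pits $k$-dimensional subspaces against $k$-dimensional subspaces, and the min--max step correctly pits $(n-1)-k+1 = n-k$ against $n-(k+1)+1 = n-k$), and concatenating $\lambda_{k+1}(A) \le \lambda_k(C) \le \lambda_k(A)$ over $k=1,\dots,n-1$ yields precisely the stated chain. The paper does not prove this theorem --- it cites it from Stewart and Sun --- and the Courant--Fischer subspace-lifting argument you give is the standard proof found in that reference, so there is no divergence to report; the Schur-complement/secular-function fallback you sketch is also sound, with the continuity step you flag being exactly what is needed to pass from the simple-spectrum case to the general one.
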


In the following lemmas we describe how the smallest and largest singular values of $(\bold{L}^T\ \bold{H}^T)$ (here $\bold{L}$ and $\bold{H}$ are as defined in Section \ref{sec:incremental}) and the extreme eigenvalues of the observation error covariance matrix $\bold{R}$ change when new observations are introduced. The same is done for the largest eigenvalues of $\bold{H}^T \bold{R}^{-1} \bold{H}$ assuming that $\bold{R}$ is diagonal. In these lemmas the subscript $k \in \{ 0, 1,\dots, (N+1)n-1\}$ denotes the number of available observations and the subscript $k+1$ indicates that a new observation is added to the system with $k$ observations, i.e. matrices $\bold{R}_k \in \mathbb{R}^{k \times k}$ and $\bold{H}_k \in \mathbb{R}^{k \times (N+1)n}$ correspond to a system with $k$ observations and $\bold{R}_{k+1}$ and $\bold{H}_{k+1}$ correspond to the system with an additional observation. We write $\bold{R}_{k+1} = \left( \begin{array}{cc}
\bold{R}_k & r \\
 r^T & \alpha \\
\end{array} \right)$ and $\bold{H}_{k+1} = \left(  \begin{matrix}
\bold{H}_k \\
h^T_{k+1}
\end{matrix}  \right)$, where $r \in \mathbb{R}^k$, $\alpha \in \mathbb{R}^1$, $\alpha > 0$ and $h_{k+1} \in \mathbb{R}^{(N+1)n}$ correspond to the new observation.

\begin{lemma}\label{comment:theta_change}
Let $\omega_{min}$ and $\omega_{max}$ be the smallest and largest singular values of $(\bold{L}^T\ \bold{H}^T_k)$, and $\phi_{min}$ and $\phi_{max}$ be the smallest and largest singular values of $(\bold{L}^T\ \bold{H}^T_{k+1})$. Then
\begin{gather*}\label{eq:LH_sing_val}
\omega_{min}^2 \leq \phi_{min}^2 \quad \text{and} \quad \omega_{max}^2 \leq \phi_{max}^2
\end{gather*}
i.e. the smallest and largest singular values of $(\bold{L}^T\ \bold{H}^T)$ increase or are unchanged when new observations are added. 
\end{lemma}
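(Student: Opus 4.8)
My approach is to reduce the statement about singular values of the wide matrix $(\bold{L}^T\ \bold{H}^T_k)$ to a statement about eigenvalues of its (smaller) Gram matrix, and then to observe that appending one observation perturbs that Gram matrix by a positive semidefinite rank-one term, after which Theorem~\ref{th:eig_bounds} finishes the job. Since $(\bold{L}^T\ \bold{H}^T_k)$ has $(N+1)n$ rows and $(N+1)n+k$ columns, its squared singular values are the eigenvalues of
$$(\bold{L}^T\ \bold{H}^T_k)\begin{pmatrix}\bold{L}\\ \bold{H}_k\end{pmatrix}=\bold{L}^T\bold{L}+\bold{H}^T_k\bold{H}_k\in\mathbb{R}^{(N+1)n\times(N+1)n},$$
so that $\omega_{min}^2=\lambda_{min}(\bold{L}^T\bold{L}+\bold{H}^T_k\bold{H}_k)$ and $\omega_{max}^2=\lambda_{max}(\bold{L}^T\bold{L}+\bold{H}^T_k\bold{H}_k)$, with the analogous identities for $\phi_{min}^2$ and $\phi_{max}^2$ using $\bold{H}_{k+1}$ in place of $\bold{H}_k$.

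Next I would use the block structure $\bold{H}_{k+1}=\left(\begin{matrix}\bold{H}_k\\ h^T_{k+1}\end{matrix}\right)$ to write $\bold{H}^T_{k+1}\bold{H}_{k+1}=\bold{H}^T_k\bold{H}_k+h_{k+1}h^T_{k+1}$, which gives
$$\bold{L}^T\bold{L}+\bold{H}^T_{k+1}\bold{H}_{k+1}=\big(\bold{L}^T\bold{L}+\bold{H}^T_k\bold{H}_k\big)+h_{k+1}h^T_{k+1}.$$
The matrix $C:=h_{k+1}h^T_{k+1}$ is Hermitian, positive semidefinite and of rank at most one, hence $\lambda_{min}(C)=0$. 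Applying Theorem~\ref{th:eig_bounds} with $A:=\bold{L}^T\bold{L}+\bold{H}^T_k\bold{H}_k$ yields $\lambda_j(A)=\lambda_j(A)+\lambda_{min}(C)\le\lambda_j(A+C)$ for every $j\in\{1,\dots,(N+1)n\}$; taking $j$ to index the smallest, respectively the largest, eigenvalue gives $\omega_{min}^2=\lambda_{min}(A)\le\lambda_{min}(A+C)=\phi_{min}^2$ and $\omega_{max}^2=\lambda_{max}(A)\le\lambda_{max}(A+C)=\phi_{max}^2$, which is the claim.

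I do not expect a genuine obstacle here; the only point requiring care is forming the Gram matrix on the correct side, i.e. using $(\bold{L}^T\ \bold{H}^T_k)(\bold{L}^T\ \bold{H}^T_k)^T$, which inherits the clean additive block structure, rather than the larger product on the other side (for which appending an observation is a bordering and one would instead invoke Cauchy interlacing, Theorem~\ref{th:cauchys_interlace}). As a sanity check the same conclusion follows directly from the Courant--Fischer characterisation: for every unit vector $v\in\mathbb{R}^{(N+1)n}$,
$$\big\|(\bold{L}^T\ \bold{H}^T_{k+1})^Tv\big\|_2^2=\|\bold{L}v\|_2^2+\|\bold{H}_kv\|_2^2+(h^T_{k+1}v)^2\ \ge\ \|\bold{L}v\|_2^2+\|\bold{H}_kv\|_2^2=\big\|(\bold{L}^T\ \bold{H}^T_k)^Tv\big\|_2^2,$$
and maximising, respectively minimising, both sides over such $v$ recovers $\omega_{max}^2\le\phi_{max}^2$ and $\omega_{min}^2\le\phi_{min}^2$ without appeal to Theorem~\ref{th:eig_bounds}.
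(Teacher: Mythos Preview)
Your proposal is correct and follows essentially the same argument as the paper: pass to the Gram matrix $\bold{L}^T\bold{L}+\bold{H}_k^T\bold{H}_k$, observe that adding an observation adds the positive semidefinite rank-one term $h_{k+1}h_{k+1}^T$, and apply Theorem~\ref{th:eig_bounds} using $\lambda_{min}(h_{k+1}h_{k+1}^T)=0$. Your additional Courant--Fischer sanity check is a nice touch but not in the paper's proof.
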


\begin{proof}
We consider the eigenvalues of $\bL^T \bL + \bold{H}^T_k \bold{H}_k$ and $\bL^T \bL + \bold{H}^T_{k+1} \bold{H}_{k+1}$, which are the squares of the singular values of $(\bold{L}^T\ \bold{H}^T_k)$ and $(\bold{L}^T\ \bold{H}^T_{k+1})$, respectively (see Section 2.4.2 of Golub and Van Loan \cite{Golub13}). We can write
\begin{equation*}
\bold{H}^T_{k+1} \bold{H}_{k+1}  = \left(  \begin{matrix}
\bold{H}_k ^T & h_{k+1} \end{matrix}  \right)
\left(  \begin{matrix}
\bold{H}_k \\
h^T_{k+1}
\end{matrix}  \right) = 
\bold{H}_k ^T \bold{H}_k +h_{k+1} h_{k+1}^T.
\end{equation*}
Then by Theorem \ref{th:eig_bounds}, 
\begin{equation*}
\omega_{min}^2 + \lambda_{min}( h_{k+1} h_{k+1}^T) \leq \phi_{min}^2, \quad k \in \{ 0,1,\dots, (N+1)n-1\},
\end{equation*}
and since $h_{k+1} h_{k+1}^T$ is a rank 1 symmetric positive semidefinite matrix, $\lambda_{min}( h_{k+1} h_{k+1}^T) = 0$. 

The proof for the largest singular values is analogous. 
\end{proof}
\begin{lemma}\label{comment:r_eigenvalues}
Consider the observation error covariance matrices $\bold{R}_k\in \mathbb{R}^{k \times k}$ and $\bold{R}_{k+1}\in \mathbb{R}^{(k +1)\times (k+1)}$. Then
\begin{gather*}
\lambda_{min}(\bold{R}_{k+1}) \leq \lambda_{min}(\bold{R}_k) \quad \text{and} \quad \lambda_{max}(\bold{R}_k) \leq \lambda_{max}(\bold{R}_{k+1}), \quad k \in \{0,1,\dots, (N+1)n-1\},
\end{gather*}
i.e. the largest (respectively, smallest) eigenvalue of $\bold{R}$ increases (respectively, decreases), or is unchanged when new observations are introduced.
\end{lemma}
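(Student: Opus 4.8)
The plan is to read the result straight off Cauchy's Interlace Theorem (Theorem \ref{th:cauchys_interlace}). The key observation is that, with the block partition fixed in the preamble to the lemmas,
\[
\bold{R}_{k+1} = \left( \begin{array}{cc} \bold{R}_k & r \\ r^T & \alpha \end{array} \right),
\]
the matrix $\bold{R}_k$ is exactly the $k \times k$ principal submatrix of the $(k+1) \times (k+1)$ Hermitian (indeed real symmetric) matrix $\bold{R}_{k+1}$ obtained by deleting its last row and the corresponding last column. So the hypotheses of Theorem \ref{th:cauchys_interlace} are met with $A = \bold{R}_{k+1}$ (of size $n' := k+1$) and $C = \bold{R}_k$.

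First I would invoke Theorem \ref{th:cauchys_interlace} to obtain the interlacing chain
\[
\lambda_{k+1}(\bold{R}_{k+1}) \leq \lambda_{k}(\bold{R}_k) \leq \lambda_{k}(\bold{R}_{k+1}) \leq \dots \leq \lambda_2(\bold{R}_{k+1}) \leq \lambda_1(\bold{R}_k) \leq \lambda_1(\bold{R}_{k+1}).
\]
Then I would simply extract the two end inequalities. Since $\bold{R}_{k+1}$ has $k+1$ eigenvalues and $\bold{R}_k$ has $k$ eigenvalues, the smallest eigenvalue of $\bold{R}_{k+1}$ is $\lambda_{k+1}(\bold{R}_{k+1})$ and that of $\bold{R}_k$ is $\lambda_k(\bold{R}_k)$, so the left-most inequality reads $\lambda_{min}(\bold{R}_{k+1}) \leq \lambda_{min}(\bold{R}_k)$. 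Likewise, the largest eigenvalues are $\lambda_1(\bold{R}_{k+1})$ and $\lambda_1(\bold{R}_k)$, so the right-most inequality reads $\lambda_{max}(\bold{R}_k) \leq \lambda_{max}(\bold{R}_{k+1})$. Ranging over $k \in \{0, 1, \dots, (N+1)n - 1\}$ (the case $k = 0$ being trivial, as then $\bold{R}_1 = (\alpha)$) gives the claim, and no further estimates are needed.

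There is essentially no hard step here: the only thing to be careful about is the bookkeeping of the ``$k$-th largest'' convention, i.e. matching $\lambda_{min}$ and $\lambda_{max}$ with the correct ends of the interlacing chain for matrices of two different sizes, and noting that a new observation enlarges $\bold{R}$ precisely by bordering it with one new row/column so that $\bold{R}_k$ really is a principal submatrix of $\bold{R}_{k+1}$. If one instead wished to avoid appealing to Cauchy interlacing, an alternative (which I would mention but not pursue) is a direct Rayleigh-quotient argument: embedding $\mathbb{R}^k \hookrightarrow \mathbb{R}^{k+1}$ by appending a zero last coordinate shows $\lambda_{max}(\bold{R}_{k+1}) \geq \max_{x \in \mathbb{R}^k} x^T \bold{R}_k x / x^T x = \lambda_{max}(\bold{R}_k)$, and the min-statement follows from the same idea applied to $-\bold{R}_{k+1}$ together with the min-max characterisation of eigenvalues; but the interlace theorem already packages this cleanly.
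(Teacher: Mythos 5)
Your proposal is correct and is essentially the paper's own proof: both observe that $\bold{R}_k$ is a principal submatrix of $\bold{R}_{k+1}$ obtained by deleting the last row and column, and both then read the two endpoint inequalities directly from Cauchy's Interlace Theorem (Theorem \ref{th:cauchys_interlace}). You spell out the interlacing chain and the index bookkeeping more explicitly than the paper, which states the conclusion as ``immediate,'' but the argument is the same.
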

\begin{proof}
When adding an observation, a row and a corresponding column are appended to $\bold{R}_k$ while the other entries of $\bold{R}_k$ are unchanged. The result is immediate by applying Theorem \ref{th:cauchys_interlace}.
\end{proof}
\begin{lemma}\label{comment:HtRinvH_eigenvalues}
If the observation errors are uncorrelated, i.e. $\bold{R}$ is diagonal, then
\begin{equation*}
\lambda_{max}(\bold{H}^T_{k} \bold{R}_k^{-1} \bold{H}_{k}) \leq \lambda_{max}(\bold{H}^T_{k+1} \bold{R}_{k+1}^{-1} \bold{H}_{k+1}), \quad k \in \{0,1,\dots, (N+1)n-1\},
\end{equation*}
i.e. for diagonal $\bold{R}$, the largest eigenvalue of $\bold{H}^T \bold{R}^{-1} \bold{H}$ increases or is unchanged when new observations are introduced.
\end{lemma}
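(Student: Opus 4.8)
The plan is to mimic the argument used for Lemma~\ref{comment:theta_change}, exploiting the fact that diagonality of $\bold{R}$ is precisely what makes the inverse behave well under the addition of a row and column. Since $\bold{R}$ is diagonal, appending the new observation gives $\bold{R}_{k+1} = \left(\begin{smallmatrix} \bold{R}_k & 0 \\ 0 & \alpha \end{smallmatrix}\right)$ with $\alpha > 0$, and hence $\bold{R}_{k+1}^{-1} = \left(\begin{smallmatrix} \bold{R}_k^{-1} & 0 \\ 0 & 1/\alpha \end{smallmatrix}\right)$, i.e.\ $\bold{R}_{k+1}^{-1}$ is just $\bold{R}_k^{-1}$ with one extra positive diagonal entry. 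Writing $\bold{H}_{k+1} = \left(\begin{smallmatrix} \bold{H}_k \\ h_{k+1}^T \end{smallmatrix}\right)$ and multiplying out the block product, I would obtain
\begin{equation*}
\bold{H}_{k+1}^T \bold{R}_{k+1}^{-1} \bold{H}_{k+1} = \bold{H}_k^T \bold{R}_k^{-1} \bold{H}_k + \tfrac{1}{\alpha}\, h_{k+1} h_{k+1}^T,
\end{equation*}
so that the $(k+1)$-observation matrix is a rank-one, symmetric positive semidefinite perturbation of the $k$-observation matrix.

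Next I would apply Theorem~\ref{th:eig_bounds} with $A = \bold{H}_k^T \bold{R}_k^{-1} \bold{H}_k$ and $C = \tfrac{1}{\alpha} h_{k+1} h_{k+1}^T$, taking $k$ there equal to $1$ (the largest eigenvalue). Since $C$ is rank one and positive semidefinite, $\lambda_{min}(C) = 0$, so the lower Weyl bound gives
\begin{equation*}
\lambda_{max}\big(\bold{H}_{k+1}^T \bold{R}_{k+1}^{-1} \bold{H}_{k+1}\big) \;\ge\; \lambda_{max}\big(\bold{H}_k^T \bold{R}_k^{-1} \bold{H}_k\big) + \lambda_{min}(C) \;=\; \lambda_{max}\big(\bold{H}_k^T \bold{R}_k^{-1} \bold{H}_k\big),
\end{equation*}
which is exactly the claimed inequality, with equality when $h_{k+1}$ lies in the kernel of the perturbation direction (e.g.\ $h_{k+1} = 0$).

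There is no real technical obstacle here; the proof is short. The one point worth flagging — and the reason the hypothesis "$\bold{R}$ diagonal" appears — is that for a general (non-diagonal) $\bold{R}_{k+1}$, the inverse $\bold{R}_{k+1}^{-1}$ is not a simple block-diagonal extension of $\bold{R}_k^{-1}$: by the block-inverse (Schur complement) formula it mixes all the entries, the identity $\bold{H}_{k+1}^T \bold{R}_{k+1}^{-1} \bold{H}_{k+1} = \bold{H}_k^T \bold{R}_k^{-1} \bold{H}_k + (\text{PSD rank-one})$ breaks down, and monotonicity of $\lambda_{max}$ can fail. So the only "care" required is to invoke diagonality at the right moment; everything else is the same rank-one-update plus Weyl inequality device already used in Lemma~\ref{comment:theta_change}.
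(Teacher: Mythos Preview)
Your proposal is correct and follows essentially the same argument as the paper: both use diagonality of $\bold{R}$ to write $\bold{H}_{k+1}^T \bold{R}_{k+1}^{-1} \bold{H}_{k+1} = \bold{H}_k^T \bold{R}_k^{-1} \bold{H}_k + \alpha^{-1} h_{k+1} h_{k+1}^T$ and then apply Theorem~\ref{th:eig_bounds} together with $\lambda_{min}(h_{k+1} h_{k+1}^T)=0$. Your added remark on why the argument fails for non-diagonal $\bold{R}$ is a nice clarification but not part of the paper's proof.
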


\begin{proof}
The proof is similar to that of Lemma \ref{comment:theta_change}. For diagonal $\bold{R}_{k+1}$:
\begin{equation*}
\bold{R}^{-1}_{k+1} = \left(  \begin{matrix}
\bold{R}^{-1}_k &  \\
& \alpha^{-1}\\
\end{matrix}  \right), \ \alpha > 0. 
\end{equation*}
Then 
\begin{equation*}
\bold{H}^T_{k+1} \bold{R}^{-1}_{k+1} \bold{H}_{k+1}  = \left(  \begin{matrix}
\bold{H}_k ^T & h_{k+1} \end{matrix}  \right)
\left(  \begin{matrix}
\bold{R}^{-1}_k &  \\
& \alpha^{-1}\\
\end{matrix}  \right)
\left(  \begin{matrix}
\bold{H}_k \\
h^T_{k+1}
\end{matrix}  \right) = 
\bold{H}_k ^T \bold{R}_k^{-1} \bold{H}_k +\alpha^{-1} h_{k+1} h_{k+1}^T.
\end{equation*}
Hence, by Theorem \ref{th:eig_bounds}, 
\begin{equation*}
\lambda_{max}(\bold{H}^T_{k} \bold{R}^{-1}_k \bold{H}_{k}) + \alpha^{-1}  \lambda_{min}(h_{k+1} h_{k+1}^T) \leq \lambda_{max}(\bold{H}^T_{k+1} \bold{R}^{-1}_{k+1} \bold{H}_{k+1}), \quad k \in \{ 0,1,\dots, (N+1)n-1\},
\end{equation*}
and since $\lambda_{min}( h_{k+1} h_{k+1}^T) = 0$ the result is proved.
\end{proof}

\subsection*{Notation}\label{sec:notation}
In the following, we use the notation given in Table~\ref{tab:notation_eigv_sv} for the eigenvalues of $\mathcal{A}_3$, $\mathcal{A}_2$ and $\mathcal{A}_1$, and the eigenvalues and singular values of the blocks within them. We use subscripts $min$ and $max$ to denote the smallest and largest eigenvalues or singular values, respectively, and $\theta_{min}$ denote the smallest non-zero singular value of $(\bL^T \quad \bold{H}^T )$. In addition, $||\cdot||$ denotes the $L_2$ norm.

\begin{table}[h]
\begin{tabular}{l|c|c|c|c|c|c}
Matrix          &  $\mathcal{A}_3$ &  $  \mathcal{A}_2$   &$ \mathcal{A}_1$  &$  \bold{D}$    &  $  \bold{H}^T  \bold{R}^{-1} \bold{H}$   & $   \bold{R} $  \\
\hline
Eigenvalue  &      $  \gamma_i  $            &   $\zeta_i $       & $\chi_i$ &$  \psi_i   $       &  $  \nu_i         $                                          &  $  \rho_i $   \\
\end{tabular}
\hspace*{3cm}  
\begin{tabular}{l|c|c}
Matrix                & $(\bL^T \quad \bold{H}^T ) $               &  $ \bL$  \\ 
\hline
Singular value                     &    $\theta_i  $                       & $ \sigma_i $ \\
 \end{tabular} 
 \captionof{table}{Notation for the eigenvalues and singular values.}
  \label{tab:notation_eigv_sv}
 \end{table}

We also use
\begin{gather} 
\tau_{min} = min\{\psi_{min}, \rho_{min}\}, \label{eq:tau_min}  \\
\tau_{max} = max\{\psi_{max}, \rho_{max}\}. \label{eq:tau_max}
\end{gather}

\subsection{Bounds for the $\bold{3 \times 3}$ block formulation}\label{sec:3x3bounds}
To determine the numbers of positive and negative eigenvalues of $\mathcal{A}_3$ given in~\eqref{eq:saddle_mtrx}, we write $\mathcal{A}_3$ as a congruence transformation
\begin{equation*}
\mathcal{A}_3  = \left( \begin{array}{ccc}
\bold{D} & \bold{0} & \bold{L} \\
\bold{0} & \bold{R} & \bold{H} \\
\bold{L}^T & \bold{H}^T & \bold{0} \\
\end{array} \right) 
=
\left( \begin{array}{ccc}
\bold{D}  & \bold{0} & \bold{0} \\
 \bold{0} & \bold{R} & \bold{0} \\
\bL^T &  \bold{H}^T & \bold{I}
\end{array} \right)
\left( \begin{array}{ccc}
\bold{D}^{-1}  & \bold{0} & \bold{0} \\
 \bold{0} & \bold{R}^{-1} & \bold{0} \\
\bold{0} & \bold{0} & -\bL^T \bold{D}^{-1} \bL -\bold{H}^T  \bold{R}^{-1} \bold{H}
\end{array} \right)
\left( \begin{array}{ccc}
\bold{D} & \bold{0} & \bold{L} \\
\bold{0} & \bold{R} & \bold{H} \\
\bold{0} & \bold{0} & \bold{I}
\end{array} \right)
=
\hat{\bold{L}}\hat{\bold{B}}\hat{\bold{L}}^T,
\end{equation*}
where $\bold{I} \in \mathbb{R}^{(N+1)n \times (N+1)n}$ is the identity matrix. Thus, by Sylvester's law of inertia (see Section 8.1.5 of Golub and Van Loan\cite{Golub13}), $\mathcal{A}_3$ and $\hat{\bold{B}}$ have the same inertia, i.e. the same number of positive, negative, and zero eigenvalues. Since the blocks $\bold{D}^{-1}$, $\bold{R}^{-1}$ and  $\bL^T \bold{D}^{-1} \bL +\bold{H}^T  \bold{R}^{-1} \bold{H}=\mathcal{A}_1$ are symmetric positive definite matrices, $\mathcal{A}_3$ has $(N+1)n+p$ positive and $(N+1)n$ negative eigenvalues. In the following theorem, we explore how the extreme eigenvalues of $\mathcal{A}_3$ change when new observations are introduced.

\begin{theorem}\label{th:3x3eig_change}
The smallest and largest negative eigenvalues of $\mathcal{A}_3$ either move away from zero or are unchanged when new observations are introduced. 
The same holds for the largest positive eigenvalue, while the smallest positive eigenvalue approaches zero or is unchanged.
\end{theorem}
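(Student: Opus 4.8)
\emph{Proof proposal.} The plan is to observe that adding one observation enlarges $\mathcal{A}_3$ by exactly one row and the corresponding column, so that the $k$-observation matrix is a principal submatrix of the $(k+1)$-observation matrix, and then to combine Cauchy's interlace theorem (Theorem~\ref{th:cauchys_interlace}) with the inertia of $\mathcal{A}_3$ already computed above.

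First I would fix the bookkeeping. Write $\mathcal{A}_{3,k}$ for the matrix~\eqref{eq:saddle_mtrx} built from $\bold{R}_k$, $\bold{H}_k$, of order $n_k := 2(N+1)n+k$, and $\mathcal{A}_{3,k+1}$ for the one built from $\bold{R}_{k+1}$, $\bold{H}_{k+1}$, of order $n_k+1$. Adding the new observation appends a single component $\mu_{k+1}$ to the dual variable $\boldsymbol{\mu}$; using $\bold{R}_{k+1}=\left(\begin{smallmatrix}\bold{R}_k & r\\ r^T & \alpha\end{smallmatrix}\right)$ and $\bold{H}_{k+1}=\left(\begin{smallmatrix}\bold{H}_k\\ h_{k+1}^T\end{smallmatrix}\right)$, one checks that deleting from $\mathcal{A}_{3,k+1}$ the row and column indexed by $\mu_{k+1}$ returns exactly $\mathcal{A}_{3,k}$. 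Hence $\mathcal{A}_{3,k}$ is a principal submatrix of $\mathcal{A}_{3,k+1}$, and Theorem~\ref{th:cauchys_interlace} gives
\begin{equation*}
\lambda_{j+1}(\mathcal{A}_{3,k+1}) \;\le\; \lambda_j(\mathcal{A}_{3,k}) \;\le\; \lambda_j(\mathcal{A}_{3,k+1}), \qquad j = 1,\dots,n_k .
\end{equation*}

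Next I would locate the extreme eigenvalues inside these orderings. By the congruence $\mathcal{A}_3=\hat{\bold{L}}\hat{\bold{B}}\hat{\bold{L}}^T$ established above, $\mathcal{A}_{3,k}$ has $m+k$ positive and $m$ negative eigenvalues with $m:=(N+1)n$, whereas $\mathcal{A}_{3,k+1}$ has $m+k+1$ positive and $m$ negative eigenvalues; the negative count is preserved while the positive count increases by one, and it is precisely this single shift that makes the smallest positive eigenvalue move the opposite way to the other three extremes. Listing eigenvalues in decreasing order, the largest positive eigenvalue is $\lambda_1$ in both cases; the smallest positive is $\lambda_{m+k}(\mathcal{A}_{3,k})$ and $\lambda_{m+k+1}(\mathcal{A}_{3,k+1})$; the largest negative is $\lambda_{m+k+1}(\mathcal{A}_{3,k})$ and $\lambda_{m+k+2}(\mathcal{A}_{3,k+1})$; and the smallest (most negative) is $\lambda_{n_k}(\mathcal{A}_{3,k})$ and $\lambda_{n_k+1}(\mathcal{A}_{3,k+1})$. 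Reading the displayed inequalities at $j=1$, $j=m+k$, $j=m+k+1$ and $j=n_k$ respectively then yields $\lambda_1(\mathcal{A}_{3,k})\le\lambda_1(\mathcal{A}_{3,k+1})$, $\lambda_{m+k+1}(\mathcal{A}_{3,k+1})\le\lambda_{m+k}(\mathcal{A}_{3,k})$, $\lambda_{m+k+2}(\mathcal{A}_{3,k+1})\le\lambda_{m+k+1}(\mathcal{A}_{3,k})$ and $\lambda_{n_k+1}(\mathcal{A}_{3,k+1})\le\lambda_{n_k}(\mathcal{A}_{3,k})$. Since the fixed inertia keeps the sign of each of these four eigenvalues, these inequalities are exactly the claimed monotonicities: the largest positive eigenvalue does not decrease, the smallest positive one moves toward zero or is unchanged, and both the largest and the smallest negative ones move away from zero or are unchanged.

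The difficulty is entirely in the bookkeeping rather than in any estimate: one must correctly match ``smallest positive eigenvalue'', ``largest negative eigenvalue'', etc., to the right ordered index \emph{after} the positive-eigenvalue count has jumped by one, and check that the principal-submatrix identification genuinely survives a reordering of the dual components (a permutation similarity, which leaves the spectrum unchanged). Note that, unlike Lemma~\ref{comment:HtRinvH_eigenvalues}, no restriction on $\bold{R}$ such as diagonality is needed here. Once the indexing is pinned down, the result follows immediately from Theorem~\ref{th:cauchys_interlace} and the inertia count already in hand.
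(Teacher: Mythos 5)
Your proposal is correct and takes essentially the same route as the paper's proof: identify $\mathcal{A}_{3,k}$ as a principal submatrix of $\mathcal{A}_{3,k+1}$, invoke Cauchy's interlace theorem, and use the fixed negative inertia to match each of the four extreme eigenvalues to the correct ordered index. The paper streamlines the bookkeeping by labelling the $m$ negative eigenvalues with negative subscripts $\lambda_{-1},\dots,\lambda_{-m}$ (so their indices do not shift when $k$ increases), and it implicitly relies on the fact that Theorem~\ref{th:cauchys_interlace} permits deleting any row and matching column, which makes your permutation-similarity remark unnecessary but harmless.
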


\begin{proof}
Let $\mathcal{A}_{3,k}$ denote $\mathcal{A}_3$ where $p=k$. To account for an additional observation, a row and a corresponding column is added to $\mathcal{A}_3$, hence $\mathcal{A}_{3,k}$ is a principal submatrix of $\mathcal{A}_{3,k+1}$. Let 
\begin{gather*}
\lambda_{-(N + 1)n}(\mathcal{A}_{3,k}) \leq \lambda_{-((N + 1)n-1)}(\mathcal{A}_{3,k}) \leq \cdots \leq \lambda_{-1}(\mathcal{A}_{3,k}) < 0 <  \lambda_{1}(\mathcal{A}_{3,k}) \leq \cdots \leq \lambda_{(N + 1)n+k}(\mathcal{A}_{3,k})
\end{gather*}
be the eigenvalues of $\mathcal{A}_{3,k}$, and 
\begin{gather*}
\lambda_{-(N + 1)n}(\mathcal{A}_{3,k+1}) \leq \lambda_{-((N + 1)n-1)}(\mathcal{A}_{3,k+1}) \leq \cdots \leq \lambda_{-1}(\mathcal{A}_{3,k+1}) < 0 <  \lambda_{1}(\mathcal{A}_{3,k+1}) \leq \cdots \leq \lambda_{(N + 1)n+k+1}(\mathcal{A}_{3,k+1})
\end{gather*}
be the eigenvalues of $\mathcal{A}_{3,k+1}$. Then by Theorem~\ref{th:cauchys_interlace}:
\begin{gather*}
\text{smallest negative eigenvalues}:  \quad  \lambda_{-(N + 1)n}(\mathcal{A}_{3,k+1}) \leq \lambda_{-(N + 1)n}(\mathcal{A}_{3,k}), \\
\text{largest negative eigenvalues}: \quad \lambda_{-1}(\mathcal{A}_{3,k+1})  \leq \lambda_{-1}(\mathcal{A}_{3,k}), \\
\text{smallest positive eigenvalues}: \quad \lambda_{1}(\mathcal{A}_{3,k+1})  \leq \lambda_{1}(\mathcal{A}_{3,k}),\\
\text{largest positive eigenvalues}: \quad \lambda_{(N + 1)n+k}(\mathcal{A}_{3,k})  \leq \lambda_{(N + 1)n+k+1}(\mathcal{A}_{3,k+1}).
\end{gather*}
\end{proof}

To obtain information on not only how the eigenvalues of $\mathcal{A}_3$ change because of new observations, but also on where the eigenvalues lie when the number of observations is fixed, we formulate intervals for the negative and positive eigenvalues of $\mathcal{A}_3$.

\begin{theorem}\label{the:3x3eig}
The negative eigenvalues of $\mathcal{A}_3$ lie in the interval
\begin{equation}\label{3blockNeg}
I_-  =  \Big[ \frac{1}{2} \Big(\tau_{min} - \sqrt{\tau_{min}^2 +4 \theta_{max}^2}\Big), \frac{1}{2} \Big(\tau_{max} - \sqrt{\tau_{max}^2 +4 \theta_{min}^2} \Big) \Big] 
\end{equation}
and the positive eigenvalues lie in the interval
\begin{equation}\label{3blockPos}
 I_+ = \Big[ \tau_{min}, \frac{1}{2} \Big(\tau_{max} + \sqrt{\tau_{max}^2 +4 \theta_{max}^2}\Big) \Big],
\end{equation}
where 
$\tau_{min}, \tau_{max}$, and $\theta_i$ are defined in \eqref{eq:tau_min}, \eqref{eq:tau_max}, and Table~\ref{tab:notation_eigv_sv}.
\end{theorem}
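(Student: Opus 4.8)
The plan is to write $\mathcal{A}_3$ of~\eqref{eq:saddle_mtrx} as a standard $2\times 2$ block saddle point matrix and then apply the eigenvalue bounds of Rusten and Winther~\cite{RustenWinther1992}. Grouping the first two block rows and columns,
\[
\mathcal{A}_3 = \left( \begin{array}{cc} \mathcal{G} & \mathcal{B}^T \\ \mathcal{B} & \bold{0} \end{array} \right), \qquad \mathcal{G} = \left( \begin{array}{cc} \bold{D} & \bold{0} \\ \bold{0} & \bold{R} \end{array} \right), \qquad \mathcal{B} = ( \bL^T \quad \bold{H}^T ),
\]
so that $\mathcal{B}^T$ is the $((N+1)n + p)\times(N+1)n$ block $(\bL^T\ \bold{H}^T)^T$. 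First I would check the hypotheses of the Rusten--Winther result: $\mathcal{G}$ is symmetric positive definite because it is block diagonal with the covariance matrices $\bold{D}$ and $\bold{R}$ on its diagonal, and $\mathcal{B}$ has full row rank $(N+1)n$ since $\bL$ is unit lower block bidiagonal and hence nonsingular, so already $\bL^T$ has full row rank. Consequently all $(N+1)n$ singular values of $\mathcal{B}$ are nonzero, so $\theta_{\min}$ of Table~\ref{tab:notation_eigv_sv} is genuinely the smallest singular value and is strictly positive; this is also consistent with the inertia $((N+1)n+p,\,(N+1)n,\,0)$ of $\mathcal{A}_3$ already obtained from the congruence $\mathcal{A}_3 = \hat{\bold{L}}\hat{\bold{B}}\hat{\bold{L}}^T$.

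Next I would identify the quantities entering the Rusten--Winther estimates. Since $\mathcal{G}$ is block diagonal, its spectrum is the union of the spectra of $\bold{D}$ and $\bold{R}$, hence $\lambda_{\min}(\mathcal{G}) = \min\{\psi_{\min},\rho_{\min}\} = \tau_{\min}$ and $\lambda_{\max}(\mathcal{G}) = \max\{\psi_{\max},\rho_{\max}\} = \tau_{\max}$ by~\eqref{eq:tau_min}--\eqref{eq:tau_max}, while the extreme singular values of $\mathcal{B} = (\bL^T\ \bold{H}^T)$ are $\theta_{\min}$ and $\theta_{\max}$ by definition. The Rusten--Winther bounds state that for a saddle point matrix with symmetric positive definite $(1,1)$ block $A$ and off-diagonal block $B$ of full row rank, the negative eigenvalues lie in $\big[\tfrac12(\lambda_{\min}(A) - \sqrt{\lambda_{\min}(A)^2 + 4\sigma_{\max}(B)^2}),\, \tfrac12(\lambda_{\max}(A) - \sqrt{\lambda_{\max}(A)^2 + 4\sigma_{\min}(B)^2})\big]$ and the positive eigenvalues in $\big[\lambda_{\min}(A),\, \tfrac12(\lambda_{\max}(A) + \sqrt{\lambda_{\max}(A)^2 + 4\sigma_{\max}(B)^2})\big]$. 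Substituting $A = \mathcal{G}$, $B = \mathcal{B}$ and the identifications above yields exactly the intervals $I_-$ and $I_+$ of~\eqref{3blockNeg}--\eqref{3blockPos}.

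The only substantive point — and the step I expect to be the main obstacle in a self-contained derivation — is the nearest-to-zero endpoint $\tfrac12(\tau_{\max} - \sqrt{\tau_{\max}^2 + 4\theta_{\min}^2})$ of $I_-$. The other three endpoints follow easily: any eigenvalue $\gamma \neq 0$ with eigenvector $(u^T, v^T)^T$ (partitioned conformally) satisfies $v = \gamma^{-1}\mathcal{B}u$ and, after normalising $\|u\| = 1$, the scalar relation $\gamma^2 - (u^T\mathcal{G}u)\gamma - \|\mathcal{B}u\|^2 = 0$, so $\gamma = \tfrac12\big(u^T\mathcal{G}u \pm \sqrt{(u^T\mathcal{G}u)^2 + 4\|\mathcal{B}u\|^2}\big)$, and one bounds $u^T\mathcal{G}u \in [\tau_{\min},\tau_{\max}]$, $\|\mathcal{B}u\| \le \theta_{\max}$, using monotonicity of $\tfrac12(a \pm \sqrt{a^2 + 4b^2})$ in $a$ and $b$. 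Bounding a negative eigenvalue \emph{away} from zero, however, cannot be obtained from $\|\mathcal{B}u\|$ alone (this norm need not exceed $\theta_{\min}$); it requires the more delicate half of the Rusten--Winther argument, which analyses negative eigenvalues through the nonlinear eigenvalue relation $\gamma \in \operatorname{spec}\big(\mathcal{B}(\gamma\bold{I} - \mathcal{G})^{-1}\mathcal{B}^T\big)$, valid because $\gamma < 0 < \tau_{\min}$ makes $\gamma\bold{I} - \mathcal{G}$ invertible. Simply quoting their lemma bypasses this, which is the route I would take.
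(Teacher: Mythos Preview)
Your proposal is correct and is essentially identical to the paper's own proof: the paper simply writes $\mathcal{A}_3$ as a $2\times 2$ saddle point matrix with $(1,1)$ block $\left(\begin{smallmatrix}\bold{D} & \bold{0}\\ \bold{0} & \bold{R}\end{smallmatrix}\right)$ and off-diagonal block $(\bold{L}^T\ \bold{H}^T)$, then invokes Lemma~2.1 of Rusten and Winther directly. Your write-up in fact goes further than the paper by verifying the hypotheses (full row rank of $\mathcal{B}$ via nonsingularity of $\bold{L}$) and by correctly isolating the one endpoint, the negative upper bound, that does not fall out of the elementary quadratic argument and genuinely requires the Rusten--Winther analysis.
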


\begin{proof}
Lemma 2.1 of Rusten and Winther\cite{RustenWinther1992} gives eigenvalue intervals for matrices of the form $A = \left( \begin{array}{cc}
C & E \\
E^T & 0\\
\end{array} \right)$. 
Applying these intervals in the case 
$C = \left( \begin{array}{cc}
\bold{D} & \bold{0} \\
\bold{0} & \bold{R}\\
\end{array} \right) $ and 
$E^T = \left( \begin{array}{cc}
\bold{L}^T & \bold{H}^T
\end{array} \right)$ 
yields the required results. 
\end{proof}

We present two corollaries that describe how the bounds in Theorem \ref{the:3x3eig} change if additional observations are introduced and conclude that the change of the bounds is consistent with the results in Theorem~\ref{th:3x3eig_change}.
\begin{cor}\label{cor:3block_pos_bounds}
The interval for the positive eigenvalues of $\mathcal{A}_3$ in \eqref{3blockPos} either expands or is unchanged when new observations are added.
\end{cor}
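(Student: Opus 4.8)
The plan is to follow the two endpoints of $I_+$ in \eqref{3blockPos} separately, showing that the left endpoint cannot increase and the right endpoint cannot decrease when an observation is appended; together these give $I_+^{(k)} \subseteq I_+^{(k+1)}$, where a superscript $(k)$ records the number of observations. Write $I_+^{(k)} = \big[\tau_{min}^{(k)},\, \beta^{(k)}\big]$ with $\beta^{(k)} = \frac{1}{2}\big(\tau_{max}^{(k)} + \sqrt{(\tau_{max}^{(k)})^2 + 4(\theta_{max}^{(k)})^2}\,\big)$, and similarly for $k+1$.

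For the left endpoint I would use that $\tau_{min} = \min\{\psi_{min}, \rho_{min}\}$, that $\bold{D}$ (hence $\psi_{min}$) does not depend on the observations, and that $\rho_{min}^{(k+1)} \le \rho_{min}^{(k)}$ by Lemma \ref{comment:r_eigenvalues}. This gives $\tau_{min}^{(k+1)} \le \tau_{min}^{(k)}$ at once, so the left endpoint moves toward $0$ or is unchanged.

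For the right endpoint, I would first record that the function $f(a,b) = \frac{1}{2}\big(a + \sqrt{a^2 + 4b^2}\,\big)$ is non-decreasing in each variable on $a > 0$, $b \ge 0$, since $\partial_a f = \frac{1}{2}\big(1 + a/\sqrt{a^2 + 4b^2}\,\big) > 0$ and $\partial_b f = 2b/\sqrt{a^2 + 4b^2} \ge 0$; here $\tau_{max}^{(k)} > 0$ because $\bold{D}$ and $\bold{R}$ are symmetric positive definite. Then $\tau_{max} = \max\{\psi_{max}, \rho_{max}\}$ with $\psi_{max}$ fixed and $\rho_{max}^{(k+1)} \ge \rho_{max}^{(k)}$ by Lemma \ref{comment:r_eigenvalues}, so $\tau_{max}^{(k+1)} \ge \tau_{max}^{(k)}$; and $\theta_{max}$ is the largest singular value of $(\bL^T\ \bold{H}^T)$, so $\theta_{max}^{(k+1)} \ge \theta_{max}^{(k)}$ by Lemma \ref{comment:theta_change}. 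Monotonicity of $f$ then yields $\beta^{(k+1)} = f\big(\tau_{max}^{(k+1)}, \theta_{max}^{(k+1)}\big) \ge f\big(\tau_{max}^{(k)}, \theta_{max}^{(k)}\big) = \beta^{(k)}$, so the right endpoint moves away from $0$ or is unchanged, which together with the previous paragraph gives the claimed inclusion.

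I do not expect a genuine obstacle: the corollary is a bookkeeping consequence of Lemmas \ref{comment:theta_change} and \ref{comment:r_eigenvalues} together with the closed form of $I_+$. The only point worth a line of care is the elementary monotonicity of $f$ in both arguments (with the accompanying remark $\tau_{max}>0$), which is what lets the three separate monotonicities of $\rho_{min}$, $\rho_{max}$ and $\theta_{max}$ be assembled into the stated expansion; I would also note in passing that this is consistent with Theorem \ref{th:3x3eig_change}, since there the smallest positive eigenvalue of $\mathcal{A}_3$ approaches zero and the largest positive one moves away, matching the behaviour of the two endpoints of $I_+$.
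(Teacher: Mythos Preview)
Your proposal is correct and follows essentially the same approach as the paper: both argue the two endpoints separately using Lemmas~\ref{comment:theta_change} and~\ref{comment:r_eigenvalues}, with the paper doing a case split on whether $\tau_{min}$ (resp.\ $\tau_{max}$) equals $\psi_{min}$ or $\rho_{min}$ (resp.\ $\psi_{max}$ or $\rho_{max}$), while you bypass the case split via the obvious monotonicity of $\min$ and $\max$. Your explicit verification that $f(a,b)=\tfrac{1}{2}\big(a+\sqrt{a^2+4b^2}\big)$ is non-decreasing in each argument is a detail the paper leaves implicit, so if anything your write-up is slightly more careful.
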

\begin{proof}
First, consider the positive upper bound $\frac{1}{2} \Big(\tau_{max} + \sqrt{\tau_{max}^2 +4 \theta_{max}^2}\Big)$. By Lemma \ref{comment:theta_change}, $\theta_{max}^2$ increases or is unchanged when additional observations are included. If $\tau_{max} = \rho_{max}$, the same holds for $\tau_{max}$ (by Lemma \ref{comment:r_eigenvalues}). If $\tau_{max} = \psi_{max}$, changing the number of observations does not affect $\tau_{max}$. Hence, the positive upper bound increases or is unchanged.

The positive lower bound $\tau_{min}$ is unaltered if $\tau_{min} = \psi_{min}$. If $\tau_{min} = \rho_{min}$, the bound decreases or is unchanged by Lemma \ref{comment:r_eigenvalues}.
\end{proof}
\begin{cor}\label{cor:3block_neg_bounds}
If $\tau_{max}=\psi_{max}$, the upper bound for the negative eigenvalues of $\mathcal{A}_3$ in \eqref{3blockNeg} is either unchanged or moves away from zero when new observations are added. If $\tau_{min}=\psi_{min}$, the same holds for the lower bound for negative eigenvalues in \eqref{3blockNeg}.
\end{cor}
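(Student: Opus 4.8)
The plan is to handle the two endpoints of $I_-$ separately, reducing each to the monotonicity of an elementary scalar function combined with Lemmas \ref{comment:theta_change} and \ref{comment:r_eigenvalues}; the argument will parallel the one for Corollary \ref{cor:3block_pos_bounds}.

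First I would record the elementary fact that, for fixed $\tau$, the map $s \mapsto \frac{1}{2}\big(\tau - \sqrt{\tau^2 + 4s}\big)$ is non-positive and non-increasing for $s \ge 0$, because $\sqrt{\tau^2 + 4s}$ is non-decreasing in $s$. Both endpoints of $I_-$ have exactly this shape, with $s = \theta_{min}^2$ for the upper endpoint and $s = \theta_{max}^2$ for the lower one. Hence, to show that an endpoint is unchanged or moves away from zero (that is, decreases, since it is negative), it suffices to show that the relevant $\tau$ does not change while the relevant $\theta^2$ does not decrease.

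For the upper bound $\frac{1}{2}\big(\tau_{max} - \sqrt{\tau_{max}^2 + 4\theta_{min}^2}\big)$: under the hypothesis $\tau_{max} = \psi_{max}$, $\tau_{max}$ is the largest eigenvalue of $\bold{D}$, and since $\bold{D}$ does not depend on the observations it is unchanged when a new observation is added. Meanwhile $\theta_{min}^2$ is non-decreasing by Lemma \ref{comment:theta_change}; here I would note explicitly that $\theta_{min}$, the smallest non-zero singular value of $(\bold{L}^T\ \bold{H}^T)$ in the notation of Table \ref{tab:notation_eigv_sv}, coincides with the smallest singular value $\omega_{min}$ of that lemma, because $\bold{L}^T$ is invertible, so $(\bold{L}^T\ \bold{H}^T)$ has full row rank $(N+1)n$ and therefore no zero singular values. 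Combining this with the monotonicity above yields the claim for the upper bound. For the lower bound $\frac{1}{2}\big(\tau_{min} - \sqrt{\tau_{min}^2 + 4\theta_{max}^2}\big)$, the hypothesis $\tau_{min} = \psi_{min}$ makes $\tau_{min}$ the smallest eigenvalue of $\bold{D}$, again unchanged, while $\theta_{max}^2$ is non-decreasing by Lemma \ref{comment:theta_change}; the identical monotonicity argument then finishes the proof.

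I do not anticipate a genuine obstacle: the proof is essentially a sign-tracking exercise. The one point worth making explicit is why the hypotheses are needed. If instead $\tau_{max} = \rho_{max}$, then $\tau_{max}$ may increase with new observations (Lemma \ref{comment:r_eigenvalues}), which pushes $\frac{1}{2}\big(\tau_{max} - \sqrt{\tau_{max}^2 + 4\theta_{min}^2}\big)$ toward zero and competes with the effect of the growing $\theta_{min}^2$, so no monotone conclusion is available; similarly $\tau_{min} = \rho_{min}$ can decrease, leaving the net effect on the lower endpoint ambiguous. This is precisely why the statement must be conditioned on $\tau_{max} = \psi_{max}$ and $\tau_{min} = \psi_{min}$, respectively.
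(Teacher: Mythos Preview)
Your proposal is correct and follows essentially the same approach as the paper's proof, which simply observes that $\psi_{max}$ and $\psi_{min}$ are unaffected by additional observations while $\theta_{min}$ and $\theta_{max}$ are non-decreasing by Lemma~\ref{comment:theta_change}. Your version is more explicit (the monotonicity of $s\mapsto\tfrac12(\tau-\sqrt{\tau^2+4s})$, and the identification of $\theta_{min}$ with the smallest singular value via the full row rank of $(\bold{L}^T\ \bold{H}^T)$), and your closing paragraph about why the hypotheses are needed matches the paper's remark immediately following the corollary.
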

\begin{proof}
The results follow from the facts that $\psi_{max}$ and $\psi_{min}$ do not change if observations are added, whereas $\theta_{min}$ and $\theta_{max}$ increase or are unchanged by Lemma \ref{comment:theta_change}.
\end{proof}

If $\tau_{max}=\rho_{max}$ or $\tau_{min}=\rho_{min}$, it is unclear how the interval for the negative eigenvalues in \eqref{3blockNeg} changes, because
$\sqrt{\tau_{min}^2 +4 \theta_{max}^2}$ can increase, decrease or be unchanged, and both $\tau_{max}$ and $\sqrt{\tau_{max}^2 +4 \theta_{min}^2}$ can increase or be unchanged.

\subsection{Bounds for the $\bold{2 \times 2}$ block formulation}\label{sec:2x2bounds}
$\mathcal{A}_2$ given in~\eqref{2block_mtrx} is equal to the following congruence transformation
\begin{equation*}
\mathcal{A}_2  =  \left( \begin{array}{cc}
\bold{D}  & \bL \\
\bL^T & -\bold{H}^T  \bold{R}^{-1} \bold{H}
\end{array} \right) =
\left( \begin{array}{cc}
\bold{D}  & \bold{0} \\
\bL^T &  \bold{I}
\end{array} \right)
\left( \begin{array}{cc}
\bold{D}^{-1}  & \bold{0} \\
\bold{0} & -\bL^T \bold{D}^{-1} \bL -\bold{H}^T  \bold{R}^{-1} \bold{H}
\end{array} \right)
\left( \begin{array}{cc}
\bold{D}  & \bL \\
\bold{0} & \bold{I}
\end{array} \right), 
\end{equation*}
where $\bold{I} \in \mathbb{R}^{(N+1)n \times (N+1)n}$ is the identity matrix. Then by Sylvester's law, $\mathcal{A}_2$ has $(N+1)n$ positive and $(N+1)n$ negative eigenvalues. The change of the extreme negative and positive eigenvalues of $\mathcal{A}_2$ due to the additional observations is analysed in the subsequent theorem. However, the result holds only in the case of uncorrelated observation errors, unlike the general analysis for $\mathcal{A}_3$ in Theorem~\ref{th:3x3eig_change}.

\begin{theorem}\label{th:eigenvalues_2x2}
If the observation errors are uncorrelated, i.e. $\bold{R}$ is diagonal, then the smallest and largest negative eigenvalues of $\mathcal{A}_2$ either move away from zero or are unchanged when new observations are added. Contrarily, the smallest and largest positive eigenvalues of $\mathcal{A}_2$ approach zero or are unchanged.

\end{theorem}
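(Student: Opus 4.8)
The plan is to mirror the strategy of Theorem~\ref{th:3x3eig_change}, but to work with the matrix $\mathcal{A}_2$ augmented in a way that makes the new observation correspond to a \emph{bordering} (principal submatrix) operation, so that Cauchy's interlace theorem (Theorem~\ref{th:cauchys_interlace}) applies. The obstacle is that, unlike $\mathcal{A}_3$, the $2\times 2$ block matrix $\mathcal{A}_2$ does \emph{not} change size when an observation is added: the $(2,2)$ block $-\bold{H}^T\bold{R}^{-1}\bold{H}$ simply acquires the extra rank-one term. So I cannot directly say $\mathcal{A}_{2,k}$ is a principal submatrix of $\mathcal{A}_{2,k+1}$. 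The idea to get around this is to embed $\mathcal{A}_{2,k+1}$ as a Schur complement (or, equivalently, to view the rank-one update as elimination of one extra coordinate) and track the eigenvalues through that elimination.

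Concretely, I would proceed as follows. First, using that $\bold{R}$ is diagonal so that $\bold{R}_{k+1}^{-1}=\mathrm{diag}(\bold{R}_k^{-1},\alpha^{-1})$ with $\alpha>0$, write
\[
-\bold{H}_{k+1}^T\bold{R}_{k+1}^{-1}\bold{H}_{k+1} \;=\; -\bold{H}_k^T\bold{R}_k^{-1}\bold{H}_k \;-\; \alpha^{-1}h_{k+1}h_{k+1}^T,
\]
exactly as in the proof of Lemma~\ref{comment:HtRinvH_eigenvalues}. Next, consider the enlarged matrix
\[
\widetilde{\mathcal{A}} \;=\; \left(\begin{array}{ccc} \bold{D} & \bL & \bold{0} \\ \bL^T & -\bold{H}_k^T\bold{R}_k^{-1}\bold{H}_k & h_{k+1} \\ \bold{0} & h_{k+1}^T & \alpha \end{array}\right),
\]
which has $\mathcal{A}_{2,k}$ as a principal submatrix (delete the last row and column) and whose Schur complement with respect to the $(3,3)$ entry $\alpha$ is precisely $\mathcal{A}_{2,k+1}$. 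By Sylvester's law of inertia applied to the congruence that realizes this Schur complement, $\widetilde{\mathcal{A}}$ has the same inertia as $\mathrm{diag}(\mathcal{A}_{2,k+1},\alpha)$; since $\alpha>0$, $\widetilde{\mathcal{A}}$ has one more positive eigenvalue than $\mathcal{A}_{2,k+1}$ and the same number of negative eigenvalues, consistent with $\mathcal{A}_{2,k+1}$ having $(N+1)n$ of each.

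Then I would apply Cauchy's interlace theorem twice. Interlacing between $\mathcal{A}_{2,k}$ (size $2(N+1)n$) and $\widetilde{\mathcal{A}}$ (size $2(N+1)n+1$) gives, for the extreme eigenvalues, $\lambda_{min}(\widetilde{\mathcal{A}})\le\lambda_{min}(\mathcal{A}_{2,k})$ and $\lambda_{max}(\mathcal{A}_{2,k})\le\lambda_{max}(\widetilde{\mathcal{A}})$, and analogous interlacing of the inner eigenvalues around zero. Separately, interlacing between $\mathcal{A}_{2,k+1}$ (size $2(N+1)n$) and $\widetilde{\mathcal{A}}$ (size $2(N+1)n+1$)---using that $\widetilde{\mathcal{A}}$ restricted appropriately, or rather that the spectrum of $\widetilde{\mathcal{A}}$ is that of $\mathcal{A}_{2,k+1}$ together with one extra positive eigenvalue from the Schur-complement decomposition---lets me relate the extreme eigenvalues of $\mathcal{A}_{2,k+1}$ to those of $\widetilde{\mathcal{A}}$. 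Chaining the two comparisons: the most-negative eigenvalue satisfies $\lambda_{-(N+1)n}(\mathcal{A}_{2,k+1})\le\lambda_{-(N+1)n}(\mathcal{A}_{2,k})$ (moves away from zero), the largest-negative (i.e. closest-to-zero negative) satisfies $\lambda_{-1}(\mathcal{A}_{2,k+1})\le\lambda_{-1}(\mathcal{A}_{2,k})$, the smallest-positive satisfies $\lambda_{1}(\mathcal{A}_{2,k+1})\le\lambda_{1}(\mathcal{A}_{2,k})$ (approaches zero), and the largest-positive satisfies $\lambda_{max}(\mathcal{A}_{2,k})\le\lambda_{max}(\mathcal{A}_{2,k+1})$.

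The main obstacle, and the step that needs care, is making the two applications of interlacing line up correctly on the \emph{signs}: one must keep track of exactly which index in the ordered spectrum of $\widetilde{\mathcal{A}}$ corresponds to the closest-to-zero negative eigenvalue and the smallest positive eigenvalue of each of $\mathcal{A}_{2,k}$ and $\mathcal{A}_{2,k+1}$, given that $\widetilde{\mathcal{A}}$ carries one \emph{extra} positive eigenvalue relative to $\mathcal{A}_{2,k+1}$ but the \emph{same} count of positive and negative eigenvalues as $\mathcal{A}_{2,k}$ would after accounting for the size change. An alternative, possibly cleaner, route that avoids the bordering trick altogether is to invoke Theorem~\ref{th:eig_bounds} directly on the congruence-reduced block-diagonal form $\hat{\bold{B}}_2=\mathrm{diag}(\bold{D}^{-1},\,-\mathcal{A}_1)$: adding an observation changes only the $-\mathcal{A}_1$ block by subtracting $\alpha^{-1}h_{k+1}h_{k+1}^T$, a negative semidefinite rank-one perturbation, so every eigenvalue of $-\mathcal{A}_1$ weakly decreases; but since $\mathcal{A}_2$ and $\hat{\bold{B}}_2$ are only congruent, not similar, this controls inertia and sign patterns but not the eigenvalues of $\mathcal{A}_2$ themselves---so I expect the bordering/interlace argument above is the one that actually delivers the stated monotonicity.
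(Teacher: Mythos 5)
There is a genuine gap in your bordering argument, and it is essentially the same error you correctly identified and rejected in your ``alternative route'' at the end. You embed $\mathcal{A}_{2,k+1}$ as the Schur complement of the $(3,3)$ entry $\alpha$ in $\widetilde{\mathcal{A}}$, and then assert that ``the spectrum of $\widetilde{\mathcal{A}}$ is that of $\mathcal{A}_{2,k+1}$ together with one extra positive eigenvalue from the Schur-complement decomposition.'' That is false: the Schur-complement factorisation realises $\widetilde{\mathcal{A}}$ as \emph{congruent} to $\mathrm{diag}(\mathcal{A}_{2,k+1},\alpha)$, not \emph{similar} to it, so it transfers inertia but not eigenvalues. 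There is no interlacing theorem between a matrix and its Schur complement, and $\mathcal{A}_{2,k+1}$ is not a principal submatrix of $\widetilde{\mathcal{A}}$, so the second leg of your chain (from $\widetilde{\mathcal{A}}$ to $\mathcal{A}_{2,k+1}$) has no justification. Cauchy interlacing only links $\mathcal{A}_{2,k}$ to $\widetilde{\mathcal{A}}$; it does not reach $\mathcal{A}_{2,k+1}$.

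What is frustrating is that you already wrote down everything needed for the direct argument. You established the identity
\begin{equation*}
-\bold{H}_{k+1}^T\bold{R}_{k+1}^{-1}\bold{H}_{k+1} \;=\; -\bold{H}_k^T\bold{R}_k^{-1}\bold{H}_k \;-\; \alpha^{-1}h_{k+1}h_{k+1}^T,
\end{equation*}
which means $\mathcal{A}_{2,k+1}=\mathcal{A}_{2,k}+\mathcal{E}_2$ with $\mathcal{E}_2=\mathrm{diag}(\bold{0},\,-\alpha^{-1}h_{k+1}h_{k+1}^T)$ a rank-one negative semidefinite perturbation of the \emph{same-size} matrix $\mathcal{A}_{2,k}$. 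Since $\lambda_{max}(\mathcal{E}_2)=0$ and $\lambda_{min}(\mathcal{E}_2)=-\alpha^{-1}\|h_{k+1}\|^2$, Theorem~\ref{th:eig_bounds} (Weyl) applied \emph{directly} to $\mathcal{A}_{2,k}$ and $\mathcal{A}_{2,k+1}$ gives, for every index $i$,
\begin{equation*}
\lambda_i(\mathcal{A}_{2,k})-\alpha^{-1}\lambda_{max}(h_{k+1}h_{k+1}^T)\;\le\;\lambda_i(\mathcal{A}_{2,k+1})\;\le\;\lambda_i(\mathcal{A}_{2,k}),
\end{equation*}
so every eigenvalue weakly decreases: negative eigenvalues move away from zero, positive eigenvalues move toward it. This is precisely what the paper does. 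The obstacle you worried about (that the size does not grow, so Cauchy interlacing in the style of Theorem~\ref{th:3x3eig_change} is unavailable) is real, but the fix is to switch from interlacing to Weyl on the matrix itself, not to manufacture a larger matrix; the bordering trick reintroduces congruence where you need similarity.
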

\begin{proof}
Matrices $\bold{D}$ and $\bL$ do not depend on the number of observations. In Lemma~\ref{comment:HtRinvH_eigenvalues}, we have shown that $\bold{H}^T_{k+1} \bold{R}^{-1}_{k+1} \bold{H}_{k+1}   = \bold{H}_k ^T \bold{R}_k^{-1} \bold{H}_k +\alpha^{-1} h_{k+1} h_{k+1}^T, \ (\alpha > 0)$ for diagonal $\bold{R}$. Hence, when $\mathcal{A}_{2,k}$ denotes $\mathcal{A}_2$ with $p=k$, we can write 
\begin{equation*}
\mathcal{A}_{2,k+1} = \mathcal{A}_{2,k} + \left( \begin{array}{cc}
\bold{0} & \bold{0} \\
\bold{0} & -\alpha^{-1} h_{k+1} h_{k+1}^T 
\end{array} \right) = \mathcal{A}_{2,k} + \mathcal{E}_2,
\end{equation*}
where $\mathcal{E}_2$ has negative and zero eigenvalues. 
Let 
\begin{gather*}
\lambda_{-(N + 1)n}(\mathcal{A}_{2,k}) \leq  \cdots \leq \lambda_{-1}(\mathcal{A}_{2,k}) < 0 <  \lambda_{1}(\mathcal{A}_{2,k}) \leq \cdots \leq \lambda_{(N + 1)n}(\mathcal{A}_{2,k})
\end{gather*}
be the eigenvalues of $\mathcal{A}_{2,k}$, and 
\begin{gather*}
\lambda_{-(N + 1)n}(\mathcal{A}_{2,k+1}) \leq \cdots \leq \lambda_{-1}(\mathcal{A}_{2,k+1}) < 0 <  \lambda_{1}(\mathcal{A}_{2,k+1}) \leq \cdots \leq \lambda_{(N + 1)n}(\mathcal{A}_{2,k+1})
\end{gather*}
be the eigenvalues of $\mathcal{A}_{2,k+1}$. By Theorem~\ref{th:eig_bounds}, 

\begin{gather*}
\text{smallest negative eigenvalues}:  \quad \lambda_{-(N + 1)n}(\mathcal{A}_{2,k}) - \alpha^{-1} \lambda_{max} ( h_{k+1} h_{k+1}^T) \leq \lambda_{-(N + 1)n}(\mathcal{A}_{2,k+1}) \leq \lambda_{-(N + 1)n}(\mathcal{A}_{2,k}), \\
\text{largest negative eigenvalues}: \quad \lambda_{-1}(\mathcal{A}_{2,k}) - \alpha^{-1} \lambda_{max} ( h_{k+1} h_{k+1}^T) \leq \lambda_{-1}(\mathcal{A}_{2,k+1}) \leq \lambda_{-1}(\mathcal{A}_{2,k}), \\
\text{smallest positive eigenvalues}: \quad  \lambda_{1}(\mathcal{A}_{2,k}) - \alpha^{-1} \lambda_{max} ( h_{k+1} h_{k+1}^T) \leq \lambda_{1}(\mathcal{A}_{2,k+1}) \leq \lambda_{1}(\mathcal{A}_{2,k}),\\
\text{largest positive eigenvalues}:  \quad \lambda_{(N + 1)n}(\mathcal{A}_{2,k}) - \alpha^{-1} \lambda_{max} ( h_{k+1} h_{k+1}^T) \leq \lambda_{(N + 1)n}(\mathcal{A}_{2,k+1}) \leq \lambda_{(N + 1)n}(\mathcal{A}_{2,k}).
\end{gather*}
\end{proof}

We further search for the intervals in which the negative and positive eigenvalues of $\mathcal{A}_2$ lie. We follow a similar line of thought as in Silvester and Wathen \cite{SilvesterWathen1994}, with the energy arguments for any non-zero vector $\bold{w} \in \mathbb{R}^{(N+1)n}$
\begin{gather}
\psi_{min} ||\bold{w}||^2  \leq \bold{w}^T \bold{D} \bold{w}  \leq  \psi_{max} ||\bold{w}||^2, \label{energyD} \\
-\nu_{max} ||\bold{w}||^2  \leq  - \bold{w}^T  \bold{H}^T  \bold{R}^{-1} \bold{H} \bold{w}  \leq  -\nu_{min} ||\bold{w}||^2, \label{energy-HTRiH}\\
\sigma_{min} ||\bold{w}||  \leq  ||\bold{L}^T \bold{w}||  \leq \sigma_{max} ||\bold{w}||, \label{energyL} \\
\theta_{min} ||\bold{w}||  \leq  ||(\bold{L}^T \  \bold{H}^T)^T \bold{w}||  \leq \theta_{max} ||\bold{w}||. \label{energyLH}
\end{gather}

\begin{theorem}\label{2blockintervals}
The negative eigenvalues of $\mathcal{A}_2$ lie in the interval
\begin{equation}\label{th:2blockNegIntervals}
I_- =  \left[ \frac{1}{2} \left( \psi_ {min}- \nu_{max} - \sqrt{(\psi_{min} + \nu_{max})^2 + 4\sigma_{max}^2}\right),  min \left\{\beta_1 , max \left\{ \beta_2, \beta_3 \right \} \right \} \right], 
 \end{equation}
 where
 \begin{align}
 \beta_1 & =  \frac{1}{2} \left(\psi_{max} - \nu_{min} - \sqrt{(\psi_{max}+\nu_{min})^2+4\sigma_{min}^2} \right), \label{2bnegupper1}\\
 \beta_2 & = -\rho_{max}^{-1}\theta^2_{min}, \label{2bnegupper2} \\
 \beta_3 & = \frac{1}{2} \left(\psi_{max} - \sqrt{\psi_{max}^2+4\theta_{min}^2} \right), \label{2bnegupper3}
 \end{align}
and the positive ones lie in the interval
\begin{equation}\label{th:2blockPosIntervals}
I_+ = \left[  \frac{1}{2} \left( \psi_{min} - \nu_{max} + \sqrt{(\psi_{min} + \nu_{max})^2 + 4 \sigma^2_{min}} \right), \frac{1}{2} \left( \psi_{max} - \nu_{min} + \sqrt{(\psi_{max} + \nu_{min} )^2 +4 \sigma^2_{max}}\right) \right].
\end{equation}
\end{theorem}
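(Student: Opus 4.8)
The plan is to use the Rusten--Winther / Silvester--Wathen style of argument already prepared through the energy inequalities \eqref{energyD}--\eqref{energyLH}. Let $\lambda$ be an eigenvalue of $\mathcal{A}_2$ with eigenvector $(\bold{u};\bold{v})$ partitioned conformally with \eqref{2block_mtrx}, so that $\bold{D}\bold{u}+\bL\bold{v}=\lambda\bold{u}$ and $\bL^T\bold{u}-\bold{H}^T\bold{R}^{-1}\bold{H}\bold{v}=\lambda\bold{v}$. Since the congruence decomposition preceding the theorem shows $\mathcal{A}_2$ is nonsingular, $\lambda\neq0$; and because $\bL$ is nonsingular, $\bold{v}=\bold{0}$ would force $\bold{u}=\bold{0}$ (first equation) and $\bold{u}=\bold{0}$ would force $\bold{v}=\bold{0}$ (second equation), so both $\bold{u}$ and $\bold{v}$ are nonzero. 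I would treat the negative and positive eigenvalues separately, eliminating whichever block component can be solved for.

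For $\lambda<0$ the matrix $\bold{D}-\lambda\bold{I}$ is symmetric positive definite, so I would set $\bold{u}=-(\bold{D}-\lambda\bold{I})^{-1}\bL\bold{v}$, substitute into the second equation, and test against $\bold{v}$ (taken with $\|\bold{v}\|=1$), obtaining
\[
-\lambda = \bold{v}^T\bL^T(\bold{D}-\lambda\bold{I})^{-1}\bL\bold{v} + \bold{v}^T\bold{H}^T\bold{R}^{-1}\bold{H}\bold{v}.
\]
From $\psi_{min}\bold{I}\preceq\bold{D}\preceq\psi_{max}\bold{I}$ (and $\lambda<0$) we get $(\psi_{max}-\lambda)^{-1}\bold{I}\preceq(\bold{D}-\lambda\bold{I})^{-1}\preceq(\psi_{min}-\lambda)^{-1}\bold{I}$, and combining with \eqref{energyL} (applied also to $\bL$, whose singular values equal those of $\bL^T$) and \eqref{energy-HTRiH} gives $-\lambda\le\sigma_{max}^2/(\psi_{min}-\lambda)+\nu_{max}$; clearing the denominator yields a quadratic inequality in $\lambda$ whose smaller root is the left endpoint of \eqref{th:2blockNegIntervals}. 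For the right endpoint I would derive two separate upper bounds. Bounding the $\bold{H}$-term below by $\nu_{min}$ gives $-\lambda\ge\sigma_{min}^2/(\psi_{max}-\lambda)+\nu_{min}$, whose quadratic has $\beta_1$ of \eqref{2bnegupper1} as its negative root, so $\lambda\le\beta_1$. Separately, using $\bold{R}\preceq\rho_{max}\bold{I}$ to write $\bold{v}^T\bold{H}^T\bold{R}^{-1}\bold{H}\bold{v}\ge\rho_{max}^{-1}\|\bold{H}\bold{v}\|^2$ and $\bold{v}^T\bL^T(\bold{D}-\lambda\bold{I})^{-1}\bL\bold{v}\ge(\psi_{max}-\lambda)^{-1}\|\bL\bold{v}\|^2$, then applying $ax+by\ge\min\{a,b\}(x+y)$ and $\|\bL\bold{v}\|^2+\|\bold{H}\bold{v}\|^2=\|(\bL^T\ \bold{H}^T)^T\bold{v}\|^2\ge\theta_{min}^2$ from \eqref{energyLH}, gives
\[
-\lambda \ge \min\{(\psi_{max}-\lambda)^{-1},\ \rho_{max}^{-1}\}\,\theta_{min}^2.
\]
A case split on whether $\psi_{max}-\lambda\le\rho_{max}$ then yields $\lambda\le\beta_3$ of \eqref{2bnegupper3} (from the quadratic $\lambda^2-\psi_{max}\lambda-\theta_{min}^2\ge0$) in one case and $\lambda\le\beta_2$ of \eqref{2bnegupper2} in the other, hence $\lambda\le\max\{\beta_2,\beta_3\}$ always; intersecting with $\lambda\le\beta_1$ gives the right endpoint $\min\{\beta_1,\max\{\beta_2,\beta_3\}\}$.

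For $\lambda>0$ it is $\lambda\bold{I}+\bold{H}^T\bold{R}^{-1}\bold{H}$ that is positive definite (as $\bold{H}^T\bold{R}^{-1}\bold{H}\succeq\bold{0}$), so I would instead set $\bold{v}=(\lambda\bold{I}+\bold{H}^T\bold{R}^{-1}\bold{H})^{-1}\bL^T\bold{u}$, substitute into the first equation, and test against a unit $\bold{u}$ to obtain
\[
\lambda = \bold{u}^T\bold{D}\bold{u} + \bold{u}^T\bL(\lambda\bold{I}+\bold{H}^T\bold{R}^{-1}\bold{H})^{-1}\bL^T\bold{u}.
\]
Bounding $\bold{u}^T\bold{D}\bold{u}$ between $\psi_{min}$ and $\psi_{max}$, the inverted factor between $(\lambda+\nu_{max})^{-1}\bold{I}$ and $(\lambda+\nu_{min})^{-1}\bold{I}$, and $\|\bL^T\bold{u}\|$ between $\sigma_{min}$ and $\sigma_{max}$ via \eqref{energyL}, the lower and upper endpoints of \eqref{th:2blockPosIntervals} fall out of the quadratic inequalities $(\lambda-\psi_{min})(\lambda+\nu_{max})\ge\sigma_{min}^2$ and $(\lambda-\psi_{max})(\lambda+\nu_{min})\le\sigma_{max}^2$, picking in each case the root consistent with $\lambda>0$.

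The quadratic bookkeeping is routine; the step I expect to need the most care is the $\max\{\beta_2,\beta_3\}$ estimate, where one must deliberately compare the coefficients $(\psi_{max}-\lambda)^{-1}$ and $\rho_{max}^{-1}$ and recombine $\|\bL\bold{v}\|^2+\|\bold{H}\bold{v}\|^2$ into $\|(\bL^T\ \bold{H}^T)^T\bold{v}\|^2$ so that $\theta_{min}$ rather than $\sigma_{min}$ enters the bound. One should also keep track of which quadratic root is the relevant one (using the sign of $\lambda$) and check that the inequalities are manipulated in the direction preserving validity, e.g.\ that $-\lambda-\nu_{min}>0$ holds automatically in the $\beta_1$ step.
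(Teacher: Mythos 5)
Your proof is correct and follows essentially the same route as the paper: eliminate $\bold{u}$ (resp.\ $\bold{v}$) according to the sign of the eigenvalue, invoke the energy bounds \eqref{energyD}--\eqref{energyLH}, and derive quadratic inequalities whose roots give the interval endpoints, with the $\min\{(\psi_{max}-\lambda)^{-1},\rho_{max}^{-1}\}\theta_{min}^2$ argument yielding $\max\{\beta_2,\beta_3\}$ exactly as in the paper. One small slip: in justifying $\bold{u},\bold{v}\neq\bold{0}$ you cite the first equation to show $\bold{v}=\bold{0}\Rightarrow\bold{u}=\bold{0}$ and the second for the converse, but it is the other way around (the second equation gives $\bL^T\bold{u}=\bold{0}$ when $\bold{v}=\bold{0}$, and the first gives $\bL\bold{v}=\bold{0}$ when $\bold{u}=\bold{0}$); the underlying logic is unaffected.
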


\begin{proof}
Assume that $(\bold{u}^T, \bold{v}^T)^T, \ \bold{u}, \bold{v} \in \mathbb{R}^{(N+1)n}$ is an eigenvector of $\mathcal{A}_2$ with an eigenvalue $\zeta$. Then the eigenvalue equations are
\begin{align}
\bold{D} \bold{u} + \bL \bold{v} & = \zeta \bold{u}, \label{2blockEigEq1} \\
\bold{L}^T \bold{u} -  \bold{H}^T  \bold{R}^{-1} \bold{H} \bold{v} &= \zeta \bold{v}. \label{2blockEigEq2}
\end{align}
We note that if $\bold{u} = \bold{0}$ then $\bold{v} = \bold{0}$ by \eqref{2blockEigEq1} and if $\bold{v} = \bold{0}$ then $\bold{u} = \bold{0}$ by \eqref{2blockEigEq2}. Hence, $\bold{u}, \bold{v}  \neq \bold{0}$.

First, we consider $\zeta > 0$. Equation \eqref{2blockEigEq2} gives $\bold{v} = (\bold{I} \zeta + \bold{H}^T  \bold{R}^{-1} \bold{H})^{-1}\bold{L}^T \bold{u}$, where $\bold{I} \in \mathbb{R}^{(N+1)n \times (N+1)n}$. The matrix \mbox{$\bold{I} \zeta + \bold{H}^T  \bold{R}^{-1} \bold{H}$} is positive definite, hence nonsingular. We multiply \eqref{2blockEigEq1} by $\bold{u}^T$ and use the previous expression for $\bold{v}$ to get
\begin{equation}\label{uTsumEigEq}
\bold{u}^T \bold{D} \bold{u} + \bold{u}^T \bold{L}  (\bold{I} \zeta + \bold{H}^T  \bold{R}^{-1} \bold{H})^{-1} \bold{L}^T \bold{u} = \zeta ||\bold{u}||^2.
\end{equation}
The eigenvalues of $(\bold{I} \zeta + \bold{H}^T  \bold{R}^{-1} \bold{H})^{-1}$ in increasing order are $(\zeta + \nu_{max})^{-1}, \dots, (\zeta + \nu_{min})^{-1}$. Then
\begin{align*}
\bold{u}^T \bold{L}  (\bold{I} \zeta + \bold{H}^T  \bold{R}^{-1} \bold{H})^{-1} \bold{L}^T \bold{u} \geq & \frac{1}{\zeta + \nu_{max}} || \bold{L}^T \bold{u}||^2 \\
\geq & \frac{1}{\zeta + \nu_{max}}  \sigma_{min}^2 ||\bold{u}||^2 \quad \text{[by \eqref{energyL}]}.
\end{align*}
Hence, this inequality together with \eqref{energyD} and \eqref{uTsumEigEq} gives 
\begin{equation*}
\zeta ||\bold{u}||^2 \geq \psi_{min} ||\bold{u}||^2 + \frac{1}{\zeta + \nu_{max}}  \sigma_{min}^2 ||\bold{u}||^2
\end{equation*}
and solving
\begin{equation*}
\zeta^2 + (\nu_{max} - \psi_{min}) \zeta - \psi_{min} \nu_{max} - \sigma^2_{min} \geq 0
\end{equation*}
results in 
\begin{equation*}
\zeta \geq \frac{1}{2} \left( \psi_{min} - \nu_{max} + \sqrt{(\psi_{min} + \nu_{max})^2 + 4 \sigma^2_{min}} \right).
\end{equation*}

Similarly, using the upper bound from \eqref{energyD} and employing \eqref{uTsumEigEq} yields the upper bound
\begin{equation*}
\zeta \leq \frac{1}{2} \left( \psi_{max} - \nu_{min} + \sqrt{(\psi_{max} + \nu_{min})^2 + 4 \sigma^2_{max}} \right).
\end{equation*}

Now consider the case $\zeta < 0$. Since $\bold{D} - \zeta \bold{I}$ is positive definite, from \eqref{2blockEigEq1} \mbox{$\bold{u} = -(\bold{D} - \zeta \bold{I})^{-1} \bL \bold{v}$}. Using this expression and multiplying \eqref{2blockEigEq2} by $\bold{v}^T$ gives
\begin{equation}\label{2bv}
- \zeta ||\bold{v}||^2 = \bold{v}^T \bL^T (\bold{D} - \zeta \bold{I}_{(N+1)n})^{-1} \bL \bold{v} + \bold{v}^T \bold{H}^T  \bold{R}^{-1} \bold{H} \bold{v}.
\end{equation}
Then using \eqref{energy-HTRiH}, \eqref{energyL} and the fact that the smallest eigenvalue of $(\bold{D} - \zeta \bold{I})^{-1}$ is $(\psi_{max}-\zeta)^{-1}$ results in inequality
\begin{equation*}
- \zeta ||\bold{v}||^2 \geq  \sigma_{min}^2 ||\bold{v}||^2 \frac{1}{\psi_{max}-\zeta} +\nu_{min}||\bold{v}||^2,
\end{equation*}
which can be expressed as
\begin{equation*}
\zeta^2 - (\psi_{max} - \nu_{min})\zeta - \nu_{min}\psi_{max}-\sigma_{min}^2 \geq 0,
\end{equation*}
and its solution gives the upper bound 
\begin{equation}\label{2bnu1}
\zeta \leq \frac{1}{2} \left( \psi_{max} - \nu_{min} - \sqrt{(\psi_{max} + \nu_{min})^2+4\sigma_{min}^2} \right)=\beta_1.
\end{equation}

Notice that the bound \eqref{2bnu1} takes into account information on observations only if the system is fully observed. Otherwise, $p<(N+1)n$ and $\nu_{min}=0$. 

We obtain an alternative upper bound for the negative eigenvalues, that depends on the observational information and might be useful for the fully observed case, too. Equation \eqref{2bv} may be written as 
\begin{equation*}
- \zeta ||\bold{v}||^2 = \bold{v}^T (\bL^T \ \bold{H}^T ) \left( \begin{array}{cc}
(\bold{D} - \zeta \bold{I})^{-1} & \bold{0} \\ 
 \bold{0} &  \bold{R}^{-1}
\end{array} \right)  \left( \begin{array}{c}
\bL \\
\bold{H}\end{array} \right) \bold{v}.
\end{equation*}
Eigenvalues of the $2 \times 2$ block matrix in the previous equation are the eigenvalues of $(\bold{D} - \zeta \bold{I})^{-1}$ and $\bold{R}^{-1}$. Thus, by an energy argument \eqref{energyD},
\begin{align*}
- \zeta ||\bold{v}||^2 & \geq min\{\rho^{-1}_{max}, (-\zeta + \psi_{max})^{-1}\} || (\bL^T \ \bold{H}^T )^T \bold{v} ||^2 \\
 & \geq min\{\rho^{-1}_{max}, (-\zeta + \psi_{max})^{-1} \} \theta^2_{min} ||\bold{v}||^2 \quad \text{[by \eqref{energyLH}]} .
\end{align*} Hence,
\begin{equation*}
 \zeta \leq - \theta^2_{min} \iota, 
\end{equation*}
where $\iota = min\{\rho^{-1}_{max}, (-\zeta + \psi_{max})^{-1}\}$. If $\iota = \rho^{-1}_{max}$, the upper bound is 
\begin{equation*}
\zeta \leq  -\rho^{-1}_{max} \theta^2_{min}=\beta_2.
\end{equation*}
If $\iota =(-\zeta + \psi_{max})^{-1}$, the following inequality
\begin{equation*}
\zeta^2 - \psi_{max}\zeta - \theta^2_{min} \geq 0
\end{equation*}
gives the bound
\begin{equation*}
\zeta \leq \frac{1}{2} \left(\psi_{max} - \sqrt{\psi_{max}^2+4\theta_{min}^2} \right)=\beta_3.
\end{equation*}
Hence, 
\begin{equation}\label{2bnu2}
\zeta \leq max\{ \beta_2, \beta_3 \}.
\end{equation}
The required upper bound follows from \eqref{2bnu1} and \eqref{2bnu2}

Next, we obtain the lower bound for the negative eigenvalues. Using equation \eqref{2bv} with the largest eigenvalue of $ (\bold{D} - \zeta \bold{I})^{-1}$ and other parts of \eqref{energy-HTRiH} and \eqref{energyL} yields
\begin{equation*}
- \zeta ||\bold{v}||^2  \leq \sigma_{max}^2 ||\bold{v}||^2 \frac{1}{\psi_{min}-\zeta} +\nu_{max}||\bold{v}||^2.
\end{equation*} 
Solving 
\begin{equation*}
 \zeta^2 - (\psi_{min} - \nu_{max})\zeta - \nu_{max}\psi_{min}-\sigma_{max}^2 \leq 0
\end{equation*}
results in 
\begin{equation*}
\zeta \geq \frac{1}{2} \Big( \psi_{min} - \nu_{max} - \sqrt{(\psi_{min} +  \nu_{max})^2 + 4\sigma_{max}^2} \Big).
\end{equation*}
\end{proof}

We observe that if the system is not fully observed, then $p<(N+1)n$ and $\nu_{min}=0$, and the upper bound for the positive eigenvalues and the upper bound for the negative eigenvalues \eqref{2bnegupper1} in Theorem \ref{2blockintervals} reduces to (2.11) and (2.13) of Silvester and Wathen \cite{SilvesterWathen1994}. 

We are interested in how the bounds in Theorem \ref{2blockintervals} change if additional observations are introduced. The change to the upper negative bound in \eqref{th:2blockNegIntervals} depends on which of \eqref{2bnegupper1}, \eqref{2bnegupper2} or \eqref{2bnegupper3} gives the bound. Hence, in Corollary~\ref{cor:beta3morebeta2} we comment on when \eqref{2bnegupper3} is larger than \eqref{2bnegupper2} and Corollary~\ref{comment:upper_neg2x2} describes a setting when the negative upper bound is given by \eqref{2bnegupper3}.
\begin{cor}\label{cor:beta3morebeta2}
\begin{equation*}
max\{\beta_2, \beta_3\}=\beta_3 \quad \iff \frac{1}{2}(\psi_{max}+\sqrt{\psi_{max}^2 + \theta_{min}^2} ) \geq \rho_{max}.
\end{equation*}
\end{cor}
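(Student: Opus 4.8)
The plan is to turn the stated equivalence into a quadratic inequality in $\rho_{max}$ and solve it. Since $\max\{\beta_2,\beta_3\}=\beta_3$ means exactly $\beta_3\geq\beta_2$, the first step is to substitute the definitions \eqref{2bnegupper2} and \eqref{2bnegupper3} and write
\begin{equation*}
\beta_3-\beta_2=\frac{1}{2}\psi_{max}+\frac{\theta_{min}^2}{\rho_{max}}-\frac{1}{2}\sqrt{\psi_{max}^2+4\theta_{min}^2},
\end{equation*}
so that $\beta_3\geq\beta_2$ is equivalent to
\begin{equation*}
\frac{1}{2}\psi_{max}+\frac{\theta_{min}^2}{\rho_{max}}\ \geq\ \frac{1}{2}\sqrt{\psi_{max}^2+4\theta_{min}^2}.
\end{equation*}

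The key step is to square both sides. This is legitimate as an equivalence because both sides are nonnegative: $\mathbf{D}$ and $\mathbf{R}$ are symmetric positive definite, so $\psi_{max}>0$ and $\rho_{max}>0$, while $\theta_{min}>0$ since $\mathbf{L}$ is lower triangular with unit diagonal and hence nonsingular, so $(\mathbf{L}^T\ \mathbf{H}^T)$ has full row rank and all of its singular values are strictly positive. After squaring, the $\tfrac14\psi_{max}^2$ terms cancel; dividing through by $\theta_{min}^2>0$ and clearing the positive denominator $\rho_{max}^2$ leaves the quadratic inequality
\begin{equation*}
\rho_{max}^2-\psi_{max}\rho_{max}-\theta_{min}^2\ \leq\ 0.
\end{equation*}

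Finally I would solve this quadratic in $\rho_{max}$: its roots are $\tfrac12\bigl(\psi_{max}\pm\sqrt{\psi_{max}^2+4\theta_{min}^2}\bigr)$, and because $\rho_{max}>0$ while the smaller root is negative, the inequality holds precisely when $\rho_{max}$ is at most the larger root, which is the threshold appearing in the statement. I do not expect any genuine obstacle here, as the argument is elementary algebra; the only point needing care is the sign check that both sides are nonnegative before squaring, which is exactly where the strict positivity of $\theta_{min}$ (equivalently, the invertibility of $\mathbf{L}$) together with $\psi_{max},\rho_{max}>0$ enters.
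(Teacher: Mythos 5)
Your proof is correct and follows essentially the same route as the paper's: rearrange $\beta_3\geq\beta_2$, isolate the radical, square (both sides nonnegative, a check the paper leaves implicit), and reduce to the quadratic $\rho_{max}^2-\psi_{max}\rho_{max}-\theta_{min}^2\leq 0$ in $\rho_{max}$. Your resulting threshold $\tfrac12\bigl(\psi_{max}+\sqrt{\psi_{max}^2+4\theta_{min}^2}\bigr)$ matches the paper's own proof, which indicates the factor $4$ under the radical was inadvertently dropped in the corollary's statement; note also that the strict positivity of $\theta_{min}$ that you justify via invertibility of $\bL$ is already guaranteed by the paper's Notation, which defines $\theta_{min}$ as the smallest \emph{non-zero} singular value of $(\bL^T\ \bold{H}^T)$.
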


\begin{proof}
$max\{\beta_2, \beta_3\}=\beta_3$ if and only if
\begin{equation*}
\frac{1}{2} \Big(\psi_{max} - \sqrt{\psi_{max}^2+4\theta_{min}^2} \Big) \geq -\rho_{max}^{-1}\theta^2_{min}.
\end{equation*}
Rearranging this inequality gives
\begin{equation*}
\psi_{max} + 2\rho_{max}^{-1}\theta^2_{min} \geq \sqrt{\psi_{max}^2+4\theta_{min}^2}.
\end{equation*}
Squaring both sides with further rearrangement results in
\begin{equation*}
\theta_{min}^2 (\rho_{max}^{-1}\psi_{max}  + \rho_{max}^{-2}\theta^2_{min} -1)  \geq 0.
\end{equation*}
Since $\theta_{min}^2>0$, this is equivalent to
\begin{equation*}
\rho_{max}^{2} - \rho_{max}\psi_{max} - \theta^2_{min} \leq 0,
\end{equation*} 
from which it follows that
\begin{equation*}
\rho_{max} \leq \frac{1}{2} \Big(\psi_{max} + \sqrt{\psi_{max}^2+4\theta_{min}^2} \Big).
\end{equation*}
\end{proof}
Corollary~\ref{cor:beta3morebeta2} can be used to check if the assumption in the following corollary holds.
\begin{cor}\label{comment:upper_neg2x2}
If the system is not fully observed and $max\{\beta_2, \beta_3\}=\beta_3$, then the upper bound for the negative eigenvalues of $\mathcal{A}_2$ is given by \eqref{2bnegupper3}. 
\end{cor}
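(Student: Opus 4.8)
The plan is to read off the upper bound for the negative eigenvalues of $\mathcal{A}_2$ from Theorem~\ref{2blockintervals}, where it appears as $\min\{\beta_1,\max\{\beta_2,\beta_3\}\}$ with $\beta_1,\beta_2,\beta_3$ as in \eqref{2bnegupper1}--\eqref{2bnegupper3}. Under the standing hypothesis $\max\{\beta_2,\beta_3\}=\beta_3$, this reduces to $\min\{\beta_1,\beta_3\}$, so the whole statement is equivalent to the single scalar inequality $\beta_3\le\beta_1$. Thus the first step is simply to substitute the hypothesis and isolate what remains to be proved.

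Next I would invoke the "not fully observed" hypothesis. Since $p<(N+1)n$, the matrix $\bold{H}^T\bold{R}^{-1}\bold{H}$ has rank at most $p$ and is therefore singular, so $\nu_{min}=0$ (this is the observation already recorded just after \eqref{2bnu1}). Putting $\nu_{min}=0$ into \eqref{2bnegupper1} collapses $\beta_1$ to $\tfrac12\big(\psi_{max}-\sqrt{\psi_{max}^2+4\sigma_{min}^2}\big)$, which has exactly the same shape as $\beta_3=\tfrac12\big(\psi_{max}-\sqrt{\psi_{max}^2+4\theta_{min}^2}\big)$; since both square roots are nonnegative, $\beta_3\le\beta_1$ is then equivalent to $\theta_{min}^2\ge\sigma_{min}^2$. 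This is the one genuinely substantive ingredient, and I would establish it exactly as in the proof of Lemma~\ref{comment:theta_change}: $\theta_{min}^2$ is the smallest eigenvalue of $(\bold{L}^T\ \bold{H}^T)(\bold{L}^T\ \bold{H}^T)^T=\bold{L}^T\bold{L}+\bold{H}^T\bold{H}$ while $\sigma_{min}^2$ is the smallest eigenvalue of $\bold{L}^T\bold{L}$, and since $\bold{H}^T\bold{H}$ is symmetric positive semidefinite, Theorem~\ref{th:eig_bounds} gives $\lambda_{min}(\bold{L}^T\bold{L}+\bold{H}^T\bold{H})\ge\lambda_{min}(\bold{L}^T\bold{L})+\lambda_{min}(\bold{H}^T\bold{H})=\lambda_{min}(\bold{L}^T\bold{L})$. (Equivalently, one may read it off the energy bounds \eqref{energyL} and \eqref{energyLH}, using $\|(\bold{L}^T\ \bold{H}^T)^T\bold{w}\|^2=\|\bold{L}\bold{w}\|^2+\|\bold{H}\bold{w}\|^2\ge\|\bold{L}\bold{w}\|^2$.)

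Chaining these facts, $\theta_{min}^2\ge\sigma_{min}^2$ forces $\beta_3\le\beta_1$, hence $\min\{\beta_1,\max\{\beta_2,\beta_3\}\}=\min\{\beta_1,\beta_3\}=\beta_3$, which is precisely the bound \eqref{2bnegupper3}, as claimed. I do not expect a real obstacle; the only subtlety is bookkeeping on the hypotheses. The "not fully observed" assumption is used in exactly one place — it is what kills $\nu_{min}$ and thereby makes the $\beta_1$ versus $\beta_3$ comparison valid (if $\nu_{min}>0$ then $\beta_1$ can drop below $\beta_3$ and the conclusion can fail), and it is worth pointing out that Corollary~\ref{cor:beta3morebeta2} supplies a checkable criterion for the remaining hypothesis $\max\{\beta_2,\beta_3\}=\beta_3$.
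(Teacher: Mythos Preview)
Your proposal is correct and follows essentially the same approach as the paper: both use the ``not fully observed'' hypothesis to force $\nu_{min}=0$, thereby reducing $\beta_1$ to $\tfrac12(\psi_{max}-\sqrt{\psi_{max}^2+4\sigma_{min}^2})$, and then compare $\beta_1$ with $\beta_3$ via the inequality $\sigma_{min}^2\le\theta_{min}^2$ obtained from Theorem~\ref{th:eig_bounds} applied to $\bold{L}^T\bold{L}$ and $\bold{H}^T\bold{H}$. Your write-up is in fact slightly more explicit than the paper's about why the conclusion reduces to the single scalar inequality $\beta_3\le\beta_1$.
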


\begin{proof}
The singular values of $\bold{L}$ and $(\bL^T \quad \bold{H}^T )$ are the square roots of the eigenvalues of $\bL^T \bL$ and \mbox{$\bL^T \bL + \bold{H}^T \bold{H}$}, respectively. Hence, by Theorem \ref{th:eig_bounds}, 
\begin{equation*}
\sigma^2_{min} + \lambda_{min}(\bold{H}^T \bold{H}) \leq \theta^2_{min},
\end{equation*}
where $\lambda_{min}(\bold{H}^T \bold{H}) \geq 0$, since $\bold{H}^T \bold{H}$ is symmetric positive semidefinite.
Also, if $p< (N+1)n$, then $\bold{H}^T \bold{R}^{-1} \bold{H}$ is singular, i.e. $\nu_{min}=0$, and from \eqref{2bnegupper1} and \eqref{2bnegupper3}
\begin{equation*}
\beta_1  =  \frac{1}{2} \Big(\psi_{max}  - \sqrt{\psi_{max}^2+4\sigma_{min}^2} \Big) \geq \frac{1}{2} \Big(\psi_{max} - \sqrt{\psi_{max}^2+4\theta_{min}^2} \Big) = \beta_3 = max\{\beta_2,\beta_3\}.
\end{equation*}
\end{proof} 
We further describe how the negative upper bound changes if it is given by \eqref{2bnegupper1} or \eqref{2bnegupper3}, including the case described in Corollary~\ref{comment:upper_neg2x2}.
\begin{cor}\label{cor:2block_neg_upper}
If the upper bound for the negative eigenvalues of $\mathcal{A}_2$ in \eqref{th:2blockNegIntervals} is given by $\beta_1$  or $\beta_3$, then the bound moves away from zero or stays the same when new observations are added.
\end{cor}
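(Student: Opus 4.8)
The plan is to analyze $\beta_1$ and $\beta_3$ separately, since each is an explicit function of quantities whose monotonic behaviour under the addition of an observation is already established in the earlier lemmas. Recall from the notation that $\psi_{min}, \psi_{max}$ are eigenvalues of $\bold{D}$ and hence do not depend on the number of observations; $\theta_{min}$ is the smallest non-zero singular value of $(\bold{L}^T\ \bold{H}^T)$, which by Lemma~\ref{comment:theta_change} increases or is unchanged; and $\nu_{min}, \sigma_{min}, \sigma_{max}$ relate to $\bold{H}^T \bold{R}^{-1} \bold{H}$ and $\bold{L}$.

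First I would handle $\beta_3 = \frac{1}{2}\big(\psi_{max} - \sqrt{\psi_{max}^2 + 4\theta_{min}^2}\big)$. This is immediate: $\psi_{max}$ is fixed, and by Lemma~\ref{comment:theta_change} $\theta_{min}^2$ increases or is unchanged, so the term $\sqrt{\psi_{max}^2 + 4\theta_{min}^2}$ increases or is unchanged, and therefore $\beta_3$ decreases or is unchanged, i.e. moves away from zero (it is negative). Next I would handle $\beta_1 = \frac{1}{2}\big(\psi_{max} - \nu_{min} - \sqrt{(\psi_{max}+\nu_{min})^2 + 4\sigma_{min}^2}\big)$. Here the subtlety is $\nu_{min}$, the smallest eigenvalue of $\bold{H}^T \bold{R}^{-1} \bold{H}$. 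If the system is not fully observed then $\nu_{min} = 0$ both before and after adding an observation (as long as it stays under-observed), and $\beta_1$ reduces to $\frac{1}{2}(\psi_{max} - \sqrt{\psi_{max}^2 + 4\sigma_{min}^2})$; but $\sigma_{min}$ is a singular value of $\bold{L}$, which does not depend on observations, so $\beta_1$ is unchanged in that regime. The genuinely delicate case is when adding the observation takes the system from under-observed to fully observed, so that $\nu_{min}$ jumps from $0$ to a positive value; I would argue via $f(x,y) = \frac{1}{2}(a - x - \sqrt{(a+x)^2 + y})$ being decreasing in $x$ for $x \geq 0$ — indeed $\partial_x f = \frac{1}{2}(-1 - (a+x)/\sqrt{(a+x)^2+y}) < 0$ — combined with the fact (from Theorem~\ref{th:eig_bounds}, as in the proof of Corollary~\ref{comment:upper_neg2x2}) that $\sigma_{min}^2 \leq \sigma_{min}^2 + \lambda_{min}(\bold{H}^T\bold{H})$, and $\sigma_{min}^2 + \nu_{min}\cdot(\text{lower order terms})$; more carefully, one would want to compare $\beta_1$ with the new $\nu_{min}$ against the old $\beta_1$ with $\nu_{min}=0$ and show the new one is smaller.

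The main obstacle will be precisely that transition case for $\beta_1$: when $\nu_{min}$ becomes positive one must show the combined effect of the new $\nu_{min}$ term and the (weakly increased) $\sigma_{min}$ does not push $\beta_1$ towards zero. The cleanest route is probably to observe that $\beta_1$ is by construction a \emph{lower} estimate on the true largest negative eigenvalue, which by Theorem~\ref{th:eigenvalues_2x2} itself moves away from zero under added observations (in the diagonal-$\bold{R}$ case); but that argument only controls the true eigenvalue, not the bound, so it does not directly suffice. I would instead note that $\beta_1$ arises from the inequality $\zeta^2 - (\psi_{max}-\nu_{min})\zeta - \nu_{min}\psi_{max} - \sigma_{min}^2 \geq 0$ in the proof of Theorem~\ref{2blockintervals}, rewrite the relevant quadratic as a function of the data $(\nu_{min}, \sigma_{min}^2)$, and verify that its smaller root is non-increasing in each of these arguments (holding $\psi_{max}$ fixed) — monotonicity in $\sigma_{min}^2$ is clear, and monotonicity in $\nu_{min}$ follows from the $\partial_x f < 0$ computation above — so that any admissible change to the pair $(\nu_{min}, \sigma_{min}^2)$ can only decrease $\beta_1$.

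Finally I would assemble: in all cases covered ($\beta_1$ gives the bound, whether in the under-observed regime where it is unchanged or in the transition where it decreases; $\beta_3$ gives the bound, where it decreases or is unchanged), the upper bound for the negative eigenvalues moves away from zero or stays the same, which is the claim. I would close by remarking that the case $\beta_2 = -\rho_{max}^{-1}\theta_{min}^2$ is deliberately excluded from this corollary because $\rho_{max}$ can increase (Lemma~\ref{comment:r_eigenvalues}) while $\theta_{min}^2$ also increases, so the sign of the change of $\beta_2$ is indeterminate — consistent with the corollary's hypothesis restricting attention to $\beta_1$ and $\beta_3$.
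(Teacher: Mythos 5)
Your proof is correct and takes essentially the same route as the paper: $\beta_3$ decreases or stays unchanged by Lemma~\ref{comment:theta_change}, and $\beta_1$ is constant while the system is under-observed ($\nu_{min}=0$) and decreases when $\nu_{min}$ becomes positive, for which your $\partial_x f<0$ computation supplies the needed monotonicity. One small point to streamline: you need not worry about $\sigma_{min}$ also increasing in the transition to full observation — as you yourself note earlier, $\sigma_{min}$ is a singular value of $\bL$ alone and is therefore independent of the observation network, so in $\beta_1$ the only quantity that moves is $\nu_{min}$, and the derivative argument settles the matter immediately.
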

\begin{proof}
$\beta_1$ does not change while the system is not fully observed. When the system becomes fully observed, $\nu_{min}>0$ and $\beta_1$ decreases.
$\beta_3$ decreases or stays the same by Lemma \ref{comment:theta_change}.
\end{proof}

Note that if the negative upper bound in \eqref{th:2blockNegIntervals} is given by $\beta_2$, it is unclear how the bound changes with the number of observations, since both $\rho_{max}$ and $\theta^2_{min}$ increase or stay the same. The same is true for the positive bounds in \eqref{th:2blockPosIntervals}. Only $\nu_{max}$ and $\nu_{min}$ depend on the available observations and they are contained in elements with positive and negative signs.

The result in Corollary~\ref{cor:2block_neg_upper} that applies for $\mathcal{A}_2$ with a general $\bold{R}$ is consistent with the result in Theorem~\ref{th:eigenvalues_2x2} that considers $\mathcal{A}_2$ with a diagonal $\bold{R}$. The same holds for the result in the following corollary, that determines how the lower bound for the negative eigenvalues of $\mathcal{A}_2$ changes in the special case of uncorrelated observational errors. 
\begin{cor}\label{cor:2block_neg_lower_diag_R}
If the observation error covariance matrix $\bold{R}$ is diagonal, the negative lower bound in \eqref{th:2blockNegIntervals} moves away from zero or stays the same when additional observations are introduced.
\end{cor}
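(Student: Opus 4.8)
The plan is to isolate the dependence of the lower bound in \eqref{th:2blockNegIntervals} on the observations. Write this bound as
\[
\ell(\nu_{max}) = \frac{1}{2}\left(\psi_{min} - \nu_{max} - \sqrt{(\psi_{min}+\nu_{max})^2 + 4\sigma_{max}^2}\right).
\]
Adding a new observation changes neither $\bold{D}$ nor $\bold{L}$, so $\psi_{min} = \lambda_{min}(\bold{D})$ and $\sigma_{max}$, the largest singular value of $\bold{L}$, are unaffected; the only observation-dependent ingredient is $\nu_{max} = \lambda_{max}(\bold{H}^T\bold{R}^{-1}\bold{H})$. Hence it suffices to (i) control how $\nu_{max}$ moves and (ii) show $\ell$ is monotone in $\nu_{max}$.

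For (i), since $\bold{R}$ is assumed diagonal, Lemma \ref{comment:HtRinvH_eigenvalues} applies and gives that $\nu_{max}$ increases or is unchanged when a new observation is introduced. For (ii), I would note that $\ell(\nu) = \tfrac12(\psi_{min} - g(\nu))$ with $g(\nu) = \nu + \sqrt{(\psi_{min}+\nu)^2 + 4\sigma_{max}^2}$, and that $g$ is non-decreasing on $[0,\infty)$: it is the sum of the identity map and the map $\nu \mapsto \sqrt{(\psi_{min}+\nu)^2 + 4\sigma_{max}^2}$, the latter being non-decreasing there because $\bold{D}$ is symmetric positive definite, so $\psi_{min} > 0$ and thus $\psi_{min} + \nu > 0$ on the whole range. (Equivalently one can differentiate and observe $\ell'(\nu) = \tfrac12\big(-1 - (\psi_{min}+\nu)/\sqrt{(\psi_{min}+\nu)^2 + 4\sigma_{max}^2}\big) < 0$.) Therefore $\ell$ is non-increasing, and combined with (i) the value $\ell(\nu_{max})$ does not increase when an observation is added; since this bound is nonpositive, that is exactly the statement that it moves away from zero or stays the same.

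There is no genuine obstacle here; the only points requiring care are recognising that $\nu_{max}$ is the sole observation-dependent quantity in the bound, and that the diagonality of $\bold{R}$ is used precisely where Lemma \ref{comment:HtRinvH_eigenvalues} is invoked — for general $\bold{R}$ that monotonicity of $\nu_{max}$ need not hold, which is why the corollary is stated under the diagonal assumption. The conclusion is then consistent with the special-case analysis of Theorem \ref{th:eigenvalues_2x2}.
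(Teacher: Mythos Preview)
Your proof is correct and follows exactly the paper's approach: the paper's proof is the single line ``The result follows by applying Lemma \ref{comment:HtRinvH_eigenvalues} to see how $\nu_{max}$ changes,'' and you have simply made explicit the monotonicity of the bound in $\nu_{max}$ that the paper leaves to the reader.
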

\begin{proof}
The result follows by applying Lemma \ref{comment:HtRinvH_eigenvalues} to see how $\nu_{max}$ changes.
\end{proof}
In the following corollary, we consider the intervals for the positive eigenvalues of $\mathcal{A}_3$ and $\mathcal{A}_2$ with a fixed number of observations. It suggests that we may expect the positive eigenvalues of $\mathcal{A}_2$ to be more clustered than those of $\mathcal{A}_3$.
\begin{cor}\label{corollary:pos_eigen_bounds}
The interval for the positive eigenvalues of $\mathcal{A}_2$ is contained in the interval for the positive eigenvalues of $\mathcal{A}_3$, i.e. 
\begin{equation*}
\begin{split}
\Big[ \frac{1}{2} \Big(\psi_{min} -\nu_{max} + \sqrt{(\psi_{min}+\nu_{max})^2 +4 \sigma_{min}^2}\Big), \frac{1}{2} \Big( \psi_{max} - \nu_{min} + \sqrt{(\psi_{max}+\nu_{min})^2 +4 \sigma^2_{max}}\Big) \Big] \subseteq \\
 \Big[ \tau_{min}, \frac{1}{2} \Big(\tau_{max} + \sqrt{\tau_{max}^2 +4 \theta_{max}^2}\Big) \Big].
\end{split}
\end{equation*}
\end{cor}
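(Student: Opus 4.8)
The plan is to reduce the claimed set inclusion to two scalar inequalities, one for each endpoint. Write $L_2$ and $U_2$ for the left and right endpoints of the $\mathcal{A}_2$ interval in \eqref{th:2blockPosIntervals}, and $L_3=\tau_{min}$, $U_3=\tfrac{1}{2}\bigl(\tau_{max}+\sqrt{\tau_{max}^2+4\theta_{max}^2}\bigr)$ for those of the $\mathcal{A}_3$ interval in \eqref{3blockPos}; then it suffices to prove $L_3\le L_2$ and $U_2\le U_3$. The lower-endpoint inequality is easy: by \eqref{eq:tau_min}, $\tau_{min}=\min\{\psi_{min},\rho_{min}\}\le\psi_{min}$, so it is enough to show $\psi_{min}\le L_2$. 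Since $\bold{D}$ is positive definite and $\bold{H}^T\bold{R}^{-1}\bold{H}$ is positive semidefinite, $\psi_{min}>0$ and $\nu_{max}\ge 0$, hence $\sqrt{(\psi_{min}+\nu_{max})^2+4\sigma_{min}^2}\ge\psi_{min}+\nu_{max}$; substituting this into the formula for $L_2$ gives $L_2\ge\tfrac{1}{2}(\psi_{min}-\nu_{max}+\psi_{min}+\nu_{max})=\psi_{min}$.

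The substance of the argument is the upper-endpoint inequality $U_2\le U_3$. One cannot bound $U_2$ term by term, because $-\nu_{min}$ enters with a minus sign while $\nu_{min}$ also sits inside the square root, and there is genuine cancellation between the two contributions (for instance, with $\bold{R}$ tiny and the system fully observed, $\nu_{min}$ is large yet $U_2$ stays moderate). The fix is to note that, for fixed $a\ge 0$ and $c\ge 0$, the function $b\mapsto g(a,b,c):=\tfrac{1}{2}\bigl(a-b+\sqrt{(a+b)^2+4c}\bigr)$ is non-increasing in $b$ on $[0,\infty)$, since $\partial g/\partial b=\tfrac{1}{2}\bigl(-1+(a+b)/\sqrt{(a+b)^2+4c}\bigr)\le 0$. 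Applying this with $a=\psi_{max}$, $c=\sigma_{max}^2$ and lowering $b$ from $\nu_{min}\ge 0$ to $0$ gives $U_2=g(\psi_{max},\nu_{min},\sigma_{max}^2)\le g(\psi_{max},0,\sigma_{max}^2)=\tfrac{1}{2}\bigl(\psi_{max}+\sqrt{\psi_{max}^2+4\sigma_{max}^2}\bigr)$.

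To finish, I would use that $a\mapsto\tfrac{1}{2}(a+\sqrt{a^2+4c})$ is non-decreasing for $a>0$ and $c\mapsto\tfrac{1}{2}(a+\sqrt{a^2+4c})$ is non-decreasing for $c\ge 0$, together with the comparisons $\psi_{max}\le\tau_{max}$ (immediate from \eqref{eq:tau_max}) and $\sigma_{max}^2=\lambda_{max}(\bL^T\bL)\le\lambda_{max}(\bL^T\bL+\bold{H}^T\bold{H})=\theta_{max}^2$, the latter from Theorem~\ref{th:eig_bounds} because $\bold{H}^T\bold{H}$ is positive semidefinite. These yield $U_2\le\tfrac{1}{2}\bigl(\psi_{max}+\sqrt{\psi_{max}^2+4\sigma_{max}^2}\bigr)\le\tfrac{1}{2}\bigl(\tau_{max}+\sqrt{\tau_{max}^2+4\theta_{max}^2}\bigr)=U_3$, which together with $L_3\le L_2$ gives the inclusion (trivially so if the $\mathcal{A}_2$ interval happens to be empty).

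I expect the only real obstacle to be spotting the monotonicity-in-$b$ observation that eliminates the $\nu_{min}$ dependence in $U_2$; after that, everything reduces to elementary scalar calculus and the already-available facts that $\tau_{max}$ dominates $\psi_{max}$ and $\theta_{max}$ dominates $\sigma_{max}$.
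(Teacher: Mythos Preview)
Your proposal is correct. The lower-endpoint argument is essentially identical to the paper's: both use $\tau_{min}\le\psi_{min}$ and then observe that $\sqrt{(\psi_{min}+\nu_{max})^2+4\sigma_{min}^2}\ge\psi_{min}+\nu_{max}$, which is the paper's $0\le 4\sigma_{min}^2$ rewritten.

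For the upper endpoint the two arguments diverge in technique. The paper first passes from $\tau_{max}$ to $\psi_{max}$ (as you do), but then compares $\tfrac12\bigl(\psi_{max}+\sqrt{\psi_{max}^2+4\theta_{max}^2}\bigr)$ directly with $U_2$ by a single algebraic chain: rearranging, squaring, and simplifying yields
\[
2(\theta_{max}^2-\sigma_{max}^2)\ \ge\ \nu_{min}\bigl(\psi_{max}-\sqrt{\psi_{max}^2+4\theta_{max}^2}\bigr),
\]
which holds because the left side is nonnegative and the right side nonpositive. You instead isolate the effect of each parameter via monotonicity of $g(a,b,c)=\tfrac12(a-b+\sqrt{(a+b)^2+4c})$: first drop $b=\nu_{min}$ to $0$, then raise $a$ from $\psi_{max}$ to $\tau_{max}$ and $c$ from $\sigma_{max}^2$ to $\theta_{max}^2$. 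Your route is a little more conceptual---it makes transparent \emph{why} $\nu_{min}$ can only lower $U_2$---at the price of a derivative computation; the paper's route is purely algebraic and handles the $\nu_{min}$ term and the $\sigma_{max}\!\to\!\theta_{max}$ comparison in one stroke. Both are equally elementary and valid.
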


\begin{proof}
As observed in Corollary \ref{comment:upper_neg2x2},
\begin{equation*}
\sigma^2_{max} + \lambda_{min}(\bold{H}^T \bold{H}) \leq \theta^2_{max},
\end{equation*}
with $\lambda_{min}(\bold{H}^T \bold{H}) \geq 0$. 
Also, by definition $\tau_{max} \geq \psi_{max}$ and the following inequality for the upper bound for the positive eigenvalues of $\mathcal{A}_3$ holds
\begin{equation*}
\frac{1}{2} \Big(\tau_{max} + \sqrt{\tau_{max}^2 +4 \theta_{max}^2}\Big) \geq \frac{1}{2} \Big(\psi_{max} + \sqrt{\psi_{max}^2 +4 \theta_{max}^2}\Big).
\end{equation*}
Thus, we show that the upper bound for positive eigenvalues of $\mathcal{A}_3$ is larger than the upper bound for positive eigenvalues of $\mathcal{A}_2$:
\begin{align}
\frac{1}{2} \Big(\psi_{max} + \sqrt{\psi_{max}^2 +4 \theta_{max}^2}\Big) & \geq \frac{1}{2} \Big( \psi_{max} - \nu_{min} + \sqrt{(\psi_{max}+\nu_{min})^2 +4 \sigma^2_{max}}\Big) \nonumber \\
\iff \nu_{min} + \sqrt{\psi_{max}^2 +4 \theta_{max}^2} & \geq \sqrt{(\psi_{max}+\nu_{min})^2 +4 \sigma^2_{max}} \nonumber \\
\text{(squaring both sides and simplifying)}\quad \iff 2\theta_{max}^2 +\nu_{min} \sqrt{\psi_{max}^2 +4 \theta_{max}^2} & \geq \psi_{max}\nu_{min} + 2 \sigma^2_{max} \nonumber \\
\text{(rearranging)}\quad \iff 2(\theta_{max}^2 - \sigma^2_{max}) & \geq \nu_{min}(\psi_{max} - \sqrt{\psi_{max}^2 +4 \theta_{max}^2}). \label{ineq:posUpperCompare}
\end{align}
Inequality \eqref{ineq:posUpperCompare} always holds because the left hand side is positive and the right hand side is negative.

We also show that the lower bound for the positive eigenvalues of $\mathcal{A}_3$ is smaller than the lower bound for the positive eigenvalues of $\mathcal{A}_2$:
\begin{equation*}
\tau_{min} \leq \frac{1}{2} \Big(\psi_{min} -\nu_{max} +\sqrt{(\psi_{min}+\nu_{max})^2 +4 \sigma_{min}^2}\Big).
\end{equation*}
Note that by definition $\tau_{min} \leq \psi_{min}$ and the following inequality always holds 
\begin{equation*}
\psi_{min} \leq \frac{1}{2} \Big(\psi_{min} -\nu_{max} + \sqrt{(\psi_{min}+\nu_{max})^2 +4 \sigma_{min}^2}\Big),
\end{equation*}
because it can be simplified to
\begin{align*}
\psi_{min} + \nu_{max} & \leq \sqrt{(\psi_{min}+\nu_{max})^2 +4 \sigma_{min}^2} \\
\text{(squaring both sides)}\quad \iff (\psi_{min} + \nu_{max})^2 & \leq (\psi_{min}+\nu_{max})^2 +4 \sigma_{min}^2 \\
\iff 0 & \leq 4 \sigma_{min}^2.
\end{align*}
\end{proof}

\subsection{Bounds for the $\bold{1 \times 1}$ block formulation}\label{sec:1x1bounds}
The system matrix $\mathcal{A}_1$ given by \eqref{1block_mtrx} is symmetric positive definite and so its eigenvalues are positive. We determine how these change due to additional observations when the observation errors are uncorrelated (as for the extreme eigenvalues of $\mathcal{A}_2$ in Theorem~\ref{th:eigenvalues_2x2}).

\begin{theorem}\label{th:eig1x1change}
If the observation errors are uncorrelated, i.e. $\bold{R}$ is diagonal, then the eigenvalues of $\mathcal{A}_1$ move away from zero or are unchanged when new observations are added.
\end{theorem}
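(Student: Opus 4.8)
The plan is to mimic the structure of the proof of Theorem~\ref{th:eigenvalues_2x2}, exploiting the fact that, unlike $\mathcal{A}_3$, the matrix $\mathcal{A}_1 = \bL^T\bold{D}^{-1}\bL + \bold{H}^T\bold{R}^{-1}\bold{H}$ has size $(N+1)n \times (N+1)n$ regardless of the number of observations, so adding an observation is a low-rank \emph{additive} perturbation rather than a bordering of the matrix. First I would fix $k \in \{0,1,\dots,(N+1)n-1\}$ and write $\mathcal{A}_{1,k}$ for $\mathcal{A}_1$ in the system with $p=k$ observations. Since $\bL$ and $\bold{D}$ are independent of the observations, only the term $\bold{H}^T\bold{R}^{-1}\bold{H}$ changes.

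Next I would invoke the identity already established inside the proof of Lemma~\ref{comment:HtRinvH_eigenvalues}: for diagonal $\bold{R}$,
\begin{equation*}
\bold{H}^T_{k+1}\bold{R}^{-1}_{k+1}\bold{H}_{k+1} = \bold{H}^T_k\bold{R}^{-1}_k\bold{H}_k + \alpha^{-1} h_{k+1} h_{k+1}^T,\qquad \alpha>0,
\end{equation*}
so that
\begin{equation*}
\mathcal{A}_{1,k+1} = \mathcal{A}_{1,k} + \alpha^{-1} h_{k+1} h_{k+1}^T = \mathcal{A}_{1,k} + \mathcal{E}_1,
\end{equation*}
where $\mathcal{E}_1 = \alpha^{-1} h_{k+1} h_{k+1}^T$ is symmetric positive semidefinite of rank at most one, hence $\lambda_{min}(\mathcal{E}_1)=0$. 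Applying Theorem~\ref{th:eig_bounds} with $A = \mathcal{A}_{1,k}$ and $C = \mathcal{E}_1$ then gives, for every $i$,
\begin{equation*}
\chi_i(\mathcal{A}_{1,k+1}) \;\ge\; \chi_i(\mathcal{A}_{1,k}) + \lambda_{min}(\mathcal{E}_1) \;=\; \chi_i(\mathcal{A}_{1,k}).
\end{equation*}
Finally I would note that $\mathcal{A}_1$ is symmetric positive definite (as established in Section~\ref{sec:1x1}), so all its eigenvalues are positive and the inequality $\chi_i(\mathcal{A}_{1,k+1}) \ge \chi_i(\mathcal{A}_{1,k})$ is precisely the statement that each eigenvalue moves away from zero or is unchanged.

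I do not anticipate a genuine obstacle here: the result is a one-line consequence of Weyl's inequality (Theorem~\ref{th:eig_bounds}) once the additive perturbation structure is recognised. The only point requiring a little care is to state clearly why $\mathcal{A}_{1,k}$ and $\mathcal{A}_{1,k+1}$ have the same dimension (so that Theorem~\ref{th:eig_bounds} applies directly, whereas for $\mathcal{A}_3$ one had to resort to Cauchy interlacing), and to remark — as the analogous discussion after Theorem~\ref{th:eigenvalues_2x2} does — that diagonality of $\bold{R}$ is what makes $\mathcal{E}_1$ positive semidefinite; for a general $\bold{R}$ the perturbation need not be definite and the monotonicity can fail.
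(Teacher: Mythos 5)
Your proof is correct and follows essentially the same route as the paper: express $\mathcal{A}_{1,k+1} = \mathcal{A}_{1,k} + \alpha^{-1} h_{k+1} h_{k+1}^T$ as an additive rank-one positive semidefinite perturbation (valid for diagonal $\bold{R}$), then apply Theorem~\ref{th:eig_bounds}. The paper's proof is terser but relies on exactly the same decomposition and the same invocation of Weyl's inequality.
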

\begin{proof}
Let $\mathcal{A}_{1,k}$ denote $\mathcal{A}_1$ where $p=k$. Then
$\mathcal{A}_{1,k+1} = \bL^T \bold{D}^{-1} \bL + \bold{H}^T_{k+1} \bold{R}^{-1}_{k+1} \bold{H}_{k+1} = \mathcal{A}_{1,k} +\alpha^{-1} h_{k+1} h_{k+1}^T$.
The result follows by applying Theorem~\ref{th:eig_bounds}.
\end{proof}

We formulate spectral bounds for $\mathcal{A}_1$ that depend on the largest and smallest eigenvalues of $\bold{D}$ and $\bold{R}$, and the largest and smallest singular values of $(\bL^T \  \bold{H}^T )$.

\begin{theorem}\label{th:1x1eig}
The eigenvalues of $\mathcal{A}_1$ lie in the interval
\begin{equation*}
I_+ = \left[\theta^2_{min} / \tau_{max}, \theta^2_{max}  / \tau_{min} \right],
\end{equation*}
where $\theta_i$ and $\tau_i$ are defined in Table~\ref{tab:notation_eigv_sv}, and \eqref{eq:tau_min} and \eqref{eq:tau_max}.
\end{theorem}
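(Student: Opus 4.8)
The plan is to bound the Rayleigh quotient of $\mathcal{A}_1$ directly. Since $\mathcal{A}_1$ in \eqref{1block_mtrx} is symmetric positive definite, each eigenvalue $\chi_i$ equals $\bold{w}^T \mathcal{A}_1 \bold{w}/\|\bold{w}\|^2$ for a suitable nonzero $\bold{w}\in\mathbb{R}^{(N+1)n}$, so it suffices to establish the two-sided estimate $\frac{\theta_{min}^2}{\tau_{max}}\|\bold{w}\|^2 \le \bold{w}^T \mathcal{A}_1 \bold{w} \le \frac{\theta_{max}^2}{\tau_{min}}\|\bold{w}\|^2$ for every nonzero $\bold{w}$.

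First I would use the factored form in \eqref{1block_mtrx}: for any $\bold{w}$, setting $\bold{z} = (\bold{L}^T\ \bold{H}^T)^T \bold{w}$, we have $\bold{w}^T \mathcal{A}_1 \bold{w} = \bold{z}^T\, \mathrm{diag}(\bold{D}^{-1},\bold{R}^{-1})\, \bold{z}$. The middle matrix is symmetric positive definite and its spectrum is the union of the spectra of $\bold{D}^{-1}$ and $\bold{R}^{-1}$; since inversion reverses the ordering of positive eigenvalues, its largest eigenvalue is $\max\{\psi_{min}^{-1},\rho_{min}^{-1}\} = (\min\{\psi_{min},\rho_{min}\})^{-1} = \tau_{min}^{-1}$ and its smallest is $\min\{\psi_{max}^{-1},\rho_{max}^{-1}\} = (\max\{\psi_{max},\rho_{max}\})^{-1} = \tau_{max}^{-1}$, by \eqref{eq:tau_min} and \eqref{eq:tau_max}. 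An energy argument (as in \eqref{energyD}, applied to $\bold{D}^{-1}$ and $\bold{R}^{-1}$) then gives $\tau_{max}^{-1}\|\bold{z}\|^2 \le \bold{w}^T \mathcal{A}_1 \bold{w} \le \tau_{min}^{-1}\|\bold{z}\|^2$.

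Next I would bound $\|\bold{z}\|^2 = \|(\bold{L}^T\ \bold{H}^T)^T \bold{w}\|^2$ between $\theta_{min}^2\|\bold{w}\|^2$ and $\theta_{max}^2\|\bold{w}\|^2$ using the singular-value energy estimate \eqref{energyLH}, recalling that $\bold{L}$ is nonsingular so that $\bold{L}^T\bold{L}+\bold{H}^T\bold{H}$ is positive definite and $\theta_{min}>0$. Chaining this with the previous estimate yields the desired two-sided bound on $\bold{w}^T \mathcal{A}_1 \bold{w}$, and dividing through by $\|\bold{w}\|^2$ proves the theorem.

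There is no substantial obstacle: the argument is merely a composition of two standard energy estimates and closely mirrors the techniques already used for $\mathcal{A}_2$ in Theorem~\ref{2blockintervals}. The only point needing a moment's care is the identification of the extreme eigenvalues of $\mathrm{diag}(\bold{D}^{-1},\bold{R}^{-1})$ — one must track that reciprocation swaps ``largest'' and ``smallest'', so that $\tau_{min}$ appears in the upper bound and $\tau_{max}$ in the lower bound — together with the brief observation that $\theta_{min}$ is strictly positive, which makes the lower bound meaningful.
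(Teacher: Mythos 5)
Your proof is correct and follows essentially the same route as the paper: both use the factored form of $\mathcal{A}_1$ from \eqref{1block_mtrx}, identify the extreme eigenvalues of $\mathrm{diag}(\bold{D}^{-1},\bold{R}^{-1})$ as $\tau_{max}^{-1}$ and $\tau_{min}^{-1}$, and chain this with the singular-value energy estimate \eqref{energyLH} for $(\bold{L}^T\ \bold{H}^T)$. The only cosmetic difference is that you phrase it via the Rayleigh quotient over all nonzero $\bold{w}$, whereas the paper writes out the eigenvalue equation for an eigenvector $\bold{u}$; these are equivalent for a symmetric matrix.
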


\begin{proof}
Assume that $\bold{u} \in \mathbb{R}^{(N+1)n}$ is an eigenvector of $\mathcal{A}_1$. Then the eigenvalue equation premultiplied by $\bold{u}^T$ can be written as
\begin{equation*}
\chi ||\bold{u}||^2 = \bold{u}^T (\bL^T \  \bold{H}^T ) \left( \begin{array}{cc}
\bold{D}^{-1}  & \bold{0} \\ 
 \bold{0} &  \bold{R}^{-1}
\end{array} \right)  \left( \begin{array}{c}
\bL \\
\bold{H}\end{array} \right) \bold{u},
\end{equation*}
where $\chi$ is an eigenvalue of $\mathcal{A}_1$. The smallest and largest eigenvalues of $ \left( \begin{array}{cc}
\bold{D}^{-1}  & \bold{0} \\ 
 \bold{0} &  \bold{R}^{-1}
\end{array} \right) $
are $\tau_{max}^{-1}$ and $\tau_{min}^{-1}$, respectively. 
The bounds follow from the following inequalities that are obtained using \eqref{energyLH}:
\begin{gather*}
\chi ||\bold{u}||^2  \geq \tau_{max}^{-1}  \bold{u}^T (\bL^T \  \bold{H}^T ) \left( \begin{array}{c}
\bL \\
\bold{H}\end{array} \right) \bold{u}  \geq \tau_{max}^{-1}  \theta^2_{min}  ||\bold{u}||^2, \\
\chi ||\bold{u}||^2  \leq \tau_{min}^{-1} \bold{u}^T (\bL^T \  \bold{H}^T ) \left( \begin{array}{c}
\bL \\
\bold{H}\end{array} \right) \bold{u}  \leq \tau_{min}^{-1} \theta^2_{max}  ||\bold{u}||^2.
\end{gather*}

\end{proof}

The following corollary explains how the upper bound for the eigenvalues of $\mathcal{A}_1$ changes with the addition of new observations. This result that applies for $\mathcal{A}_1$ with a general $\bold{R}$ is consistent with Theorem~\ref{th:eig1x1change} that considers $\mathcal{A}_1$ with a  diagonal $\bold{R}$.

\begin{cor}\label{cor:1block_upper}
The upper bound in Theorem \ref{th:1x1eig} moves away from zero or is unchanged when new observations are added.
\end{cor}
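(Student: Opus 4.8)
The plan is to treat the numerator and denominator of the upper bound $\theta_{max}^2/\tau_{min}$ from Theorem~\ref{th:1x1eig} separately, showing that appending an observation pushes each factor in the direction that enlarges the bound. For the numerator, I would invoke Lemma~\ref{comment:theta_change}: the square of the largest singular value of $(\bL^T\ \bold{H}^T)$ increases or is unchanged when a new observation is added, so $\theta_{max}^2$ is non-decreasing in the number of observations.

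For the denominator $\tau_{min}=\min\{\psi_{min},\rho_{min}\}$, I would note that $\bold{D}$ does not depend on the observations, so $\psi_{min}$ is fixed, while Lemma~\ref{comment:r_eigenvalues} gives that $\rho_{min}$ decreases or is unchanged when an observation is introduced. Taking the minimum of a constant and a non-increasing quantity yields a non-increasing quantity, so $\tau_{min}$ decreases or is unchanged and hence $\tau_{min}^{-1}$ is non-decreasing.

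Combining the two steps: since $\theta_{max}^2>0$ and $\tau_{min}^{-1}>0$, and both are non-decreasing under the addition of an observation, the product $\theta_{max}^2/\tau_{min}$ is non-decreasing, i.e. the upper bound moves away from zero or is unchanged. I do not anticipate any genuine obstacle here, as the corollary is an immediate consequence of Lemmas~\ref{comment:theta_change} and~\ref{comment:r_eigenvalues}; the only point worth making explicit is the positivity of both factors, which is precisely what lets monotonicity of the individual factors transfer to monotonicity of their ratio.
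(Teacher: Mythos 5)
Your argument is correct and matches the paper's proof: both invoke Lemma~\ref{comment:theta_change} for the non-decrease of $\theta_{max}$ and Lemma~\ref{comment:r_eigenvalues} (together with the observation-independence of $\bold{D}$) for the non-increase of $\tau_{min}$, then combine them. Your version just spells out the positivity that makes the ratio monotone, which the paper leaves implicit.
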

\begin{proof}
If $\tau_{min}= \rho_{min}$, $\tau_{min}$ decreases by Lemma \ref{comment:r_eigenvalues}. Otherwise $\tau_{min}$ does not change. The result follows by applying Lemma \ref{comment:theta_change} to determine the change to $\theta_{max}$. 
\end{proof}

It is unclear how the lower bound in Theorem \ref{th:1x1eig} changes with respect to the number of observations, because both the numerator and denominator grow or stay unchanged by Lemmas \ref{comment:theta_change} and \ref{comment:r_eigenvalues}, respectively.

\subsection{Alternative bounds}
\begin{sloppypar}
Alternative eigenvalue bounds for symmetric saddle point matrices have been formulated by Axelsson and Neytcheva \cite{Axelsson2006}. These depend on the eigenvalues of the matrices $\bL^T \bold{D}^{-1} \bL$, $\bold{R}$, $\bold{D}$ and $\mathcal{A}_1$, and ${\xi=max\{ | \lambda_i(\mathcal{A}_1^{-1/2} \bL^T \bold{D}^{-1} \bL \mathcal{A}_1^{-1/2})|, i=1, \dots, (N+1)n \}}$. 
\end{sloppypar}

\begin{theorem}[From Theorem 1 (c) of Axelsson and Neytcheva \cite{Axelsson2006}]\label{th:3x3_eig_bounds_AN2006}
The negative eigenvalues of $\mathcal{A}_3$ lie in the interval
\begin{equation*}
I_-  =  \left[ \frac{1}{2} \left(\tau_{max} - \sqrt{\tau_{max}^2 +4 \tau_{max} \lambda_{max}(\mathcal{A}_1)}\right), \frac{1}{2} \left(\tau_{min} - \sqrt{\tau_{min}^2 +4 \tau_{min} \lambda_{min}(\mathcal{A}_1)} \right) \right] 
\end{equation*}
and the positive ones lie in the interval
\begin{equation*}
I_+ =  \left[ \tau_{min}, \frac{1}{2} \left(\tau_{max} + \sqrt{\tau_{max}^2 +4 \tau_{max} \lambda_{max}(\mathcal{A}_1)}\right) \right].
\end{equation*}
\end{theorem}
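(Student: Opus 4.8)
The plan is to derive the statement as a direct specialization of Theorem~1(c) of Axelsson and Neytcheva~\cite{Axelsson2006}, which encloses the positive and the negative eigenvalues of a symmetric saddle point matrix $\left(\begin{smallmatrix} C & E \\ E^T & \mathbf{0} \end{smallmatrix}\right)$, with $C$ symmetric positive definite, in terms of $\lambda_{min}(C)$, $\lambda_{max}(C)$ and the extreme eigenvalues of the (negative) Schur complement $S = E^T C^{-1} E$. Since the enclosures are quoted from the literature, the only work is to cast $\mathcal{A}_3$ into that generic form and to read off the spectral quantities that appear.

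First I would regroup $\mathcal{A}_3$ in~\eqref{eq:saddle_mtrx} into $2 \times 2$ block form by merging its first two block rows and columns, taking $C = \left(\begin{smallmatrix} \mathbf{D} & \mathbf{0} \\ \mathbf{0} & \mathbf{R} \end{smallmatrix}\right)$ and $E = \left(\begin{smallmatrix} \mathbf{L} \\ \mathbf{H} \end{smallmatrix}\right)$, so that $\mathcal{A}_3 = \left(\begin{smallmatrix} C & E \\ E^T & \mathbf{0} \end{smallmatrix}\right)$ with $E^T = (\mathbf{L}^T\ \mathbf{H}^T)$. Because $\mathbf{D}$ and $\mathbf{R}$ are symmetric positive definite, $C$ is too; and since $C$ is block diagonal, its spectrum is the union of the spectra of $\mathbf{D}$ and $\mathbf{R}$, so $\lambda_{min}(C) = \min\{\psi_{min}, \rho_{min}\} = \tau_{min}$ and $\lambda_{max}(C) = \tau_{max}$ by~\eqref{eq:tau_min}--\eqref{eq:tau_max}. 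Next I would compute the Schur complement $E^T C^{-1} E = \mathbf{L}^T \mathbf{D}^{-1} \mathbf{L} + \mathbf{H}^T \mathbf{R}^{-1} \mathbf{H} = \mathcal{A}_1$, exactly the matrix in~\eqref{1block_mtrx}, which is symmetric positive definite (as noted in Section~\ref{sec:1x1}). Substituting $\lambda_{min}(C) = \tau_{min}$, $\lambda_{max}(C) = \tau_{max}$, $\lambda_{min}(S) = \lambda_{min}(\mathcal{A}_1)$ and $\lambda_{max}(S) = \lambda_{max}(\mathcal{A}_1)$ into the two enclosures of Theorem~1(c) then produces $I_-$ and $I_+$ verbatim.

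The delicate part is purely the bookkeeping of translating the reference's notation into ours: I would need to confirm that branch (c) of their Theorem~1 is indeed the one yielding these particular $\sqrt{\tau^2 + 4\tau\lambda(\mathcal{A}_1)}$ expressions (rather than the $\xi$-dependent bounds mentioned just before the statement), check that its hypotheses are met here --- they are, since $C$ and $\mathcal{A}_1$ are both symmetric positive definite --- and verify the orientation of the two intervals, namely that $\frac{1}{2}\bigl(\tau_{max} - \sqrt{\tau_{max}^2 + 4\tau_{max}\lambda_{max}(\mathcal{A}_1)}\bigr)$ is the left (most negative) endpoint of $I_-$ while $\frac{1}{2}\bigl(\tau_{max} + \sqrt{\tau_{max}^2 + 4\tau_{max}\lambda_{max}(\mathcal{A}_1)}\bigr)$ is the right endpoint of $I_+$. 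No genuine computation is needed beyond this matching.
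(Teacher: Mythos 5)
Your proposal is correct and matches what the paper does: the paper itself states this theorem without a written proof (it is simply cited from Axelsson and Neytcheva), relying on exactly the regrouping of $\mathcal{A}_3$ into a $2\times2$ saddle point block form with $C=\left(\begin{smallmatrix}\bold{D}&\bold{0}\\\bold{0}&\bold{R}\end{smallmatrix}\right)$ and $E^T=(\bL^T\ \bold{H}^T)$ that is already used in the proof of Theorem~\ref{the:3x3eig} and the identification of $\mathcal{A}_1$ as the negative Schur complement made in Section~\ref{sec:1x1}. Your identification of $\lambda_{min}(C)=\tau_{min}$, $\lambda_{max}(C)=\tau_{max}$, and $E^TC^{-1}E=\mathcal{A}_1$ is exactly the bookkeeping that turns the cited Theorem~1(c) into the stated intervals.
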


Note that the lower bound for the positive eigenvalues in Theorem \ref{th:3x3_eig_bounds_AN2006} is the same as in Theorem \ref{the:3x3eig}. 

\begin{theorem}[From Theorem 1 (a) and (b) of Axelsson and Neytcheva \cite{Axelsson2006}]\label{th:2x2_eig_bounds_AN2006}
The negative eigenvalues of $\mathcal{A}_2$ lie in the interval
\begin{equation*}
I_- =  \left[ -\lambda_{max}(\mathcal{A}_1), \frac{-\lambda_{min}(\mathcal{A}_1)}{1+\frac{\xi \lambda_{min}(\mathcal{A}_1)}{\psi_{min}}}  \right], 
\end{equation*}
and the positive ones lie in the interval
\begin{equation}\label{eq:AN_posbound_2x2}
I_+ =  \left[ \psi_{min}, \frac{1}{2} \left( \psi_{max} + \sqrt{\psi_{max}^2 +4\psi_{max} \lambda_{max} (\bL^T \bold{D}^{-1} \bL)} \right) \right].
\end{equation}
\end{theorem}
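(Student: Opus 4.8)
The plan is to obtain the two intervals as a direct specialisation of Theorem~1 (a)--(b) of Axelsson and Neytcheva \cite{Axelsson2006} to the matrix $\mathcal{A}_2$ of \eqref{2block_mtrx}. That result concerns symmetric saddle point matrices written as $\begin{pmatrix} A & B^T \\ B & -C \end{pmatrix}$ with $A$ symmetric positive definite and $C$ symmetric positive semidefinite, and its bounds are expressed through $A$, the negative Schur complement $S = B A^{-1} B^T + C$, the quantity $\lambda_{max}(B A^{-1} B^T)$, and $\xi = \max_i |\lambda_i(S^{-1/2} B A^{-1} B^T S^{-1/2})|$. First I would fix the dictionary $A = \bold{D}$, $B = \bL^T$ (so that $B^T = \bL$) and $C = \bold{H}^T \bold{R}^{-1} \bold{H}$; then $B A^{-1} B^T = \bL^T \bold{D}^{-1} \bL$ and $S = \bL^T \bold{D}^{-1} \bL + \bold{H}^T \bold{R}^{-1} \bold{H} = \mathcal{A}_1$ by \eqref{1block_mtrx}, and $\xi$ coincides with the parameter introduced just before the theorem statement.

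Next I would check that the hypotheses hold in our setting. The matrix $\bold{D}$ is block diagonal with the symmetric positive definite covariance blocks $B, Q_1, \dots, Q_N$, hence symmetric positive definite with $\lambda_{min}(\bold{D}) = \psi_{min}$ and $\lambda_{max}(\bold{D}) = \psi_{max}$ (Table~\ref{tab:notation_eigv_sv}); the matrix $\bold{H}^T \bold{R}^{-1} \bold{H}$ is symmetric positive semidefinite; and $\mathcal{A}_1$ is symmetric positive definite even when the system is not fully observed, because $\bL$ is block lower triangular with identity diagonal blocks and therefore nonsingular, so $\bL^T \bold{D}^{-1} \bL$ is symmetric positive definite and dominates in $\mathcal{A}_1$. (As a consistency check, $B A^{-1} B^T = \mathcal{A}_1 - \bold{H}^T \bold{R}^{-1} \bold{H} \preceq \mathcal{A}_1$, so $\xi \in [0,1]$ and the denominator $1 + \xi \lambda_{min}(\mathcal{A}_1)/\psi_{min}$ appearing in the negative interval is positive.)

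With the dictionary in place, part (b) of their Theorem~1 gives $\left[ \psi_{min}, \frac{1}{2} \left( \psi_{max} + \sqrt{\psi_{max}^2 + 4\psi_{max}\lambda_{max}(\bL^T \bold{D}^{-1} \bL)} \right) \right]$ for the positive eigenvalues, which is \eqref{eq:AN_posbound_2x2}, and part (a) gives $\left[ -\lambda_{max}(\mathcal{A}_1),\, -\lambda_{min}(\mathcal{A}_1) / \left( 1 + \xi\lambda_{min}(\mathcal{A}_1)/\psi_{min} \right) \right]$ for the negative eigenvalues, which is the other interval in the statement. Finally, by the congruence transformation displayed immediately before Theorem~\ref{th:eigenvalues_2x2} and Sylvester's law, $\mathcal{A}_2$ has exactly $(N+1)n$ positive and $(N+1)n$ negative eigenvalues, so these two intervals together account for the entire spectrum.

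I expect the only genuine effort to be bookkeeping: reconciling the sign and transpose conventions of \cite{Axelsson2006} with the block layout of $\mathcal{A}_2$ --- deciding which of their matrices is $A$, $B$, $C$ and whether the off-diagonal block of $\mathcal{A}_2$ plays the role of $B$ or $B^T$ --- and confirming that positive \emph{semi}definiteness of $C = \bold{H}^T \bold{R}^{-1} \bold{H}$, rather than strict definiteness, is sufficient for parts (a) and (b), so that no hidden fully observed assumption is required. Once this translation is settled the result follows by direct substitution.
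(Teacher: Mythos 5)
Your proposal is correct and follows the same route as the paper: the theorem is stated as a direct specialisation of Axelsson and Neytcheva's Theorem~1 (a)--(b) and the paper gives no separate proof, so your translation dictionary ($A=\bold{D}$, $B=\bL^T$, $C=\bold{H}^T\bold{R}^{-1}\bold{H}$, $S=\mathcal{A}_1$) together with the verification that the hypotheses hold is exactly the required argument. Your remark that $\bL$ is nonsingular (block lower triangular with identity diagonal blocks), so $\mathcal{A}_1$ is positive definite even when $\bold{H}^T\bold{R}^{-1}\bold{H}$ is only semidefinite, is a useful check the paper leaves implicit.
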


We observe that the bound \eqref{eq:AN_posbound_2x2} for the positive eigenvalues, unlike our bound in Theorem~\ref{2blockintervals}, is independent of the number of observations. Also, in practical applications it may not be possible to compute the upper bound for the negative eigenvalues because of the $\xi$ term.

\section{Numerical Experiments}\label{sec:numerics}
\subsection{System setup}
We present results of numerical experiments using the Lorenz 96 model \cite{Lorenz96}, where the state of the system at time $t_i$ is $x_i = (X^1_i, X^2_i, \dots, X^n_i)^T$ and the evolution of $x_i$ components $X^j,\ j \in \{1,2,\dots,n\}$, is governed by a set of $n$ coupled ODEs:
\begin{equation*}\label{eq:lorenz96}
\frac{dX^j}{dt} = -X^{j-2} X^{j-1} + X^{j-1} X^{j+1} - X^j + F,
\end{equation*}
where $X^{-1} = X^{n-1}, X^0 = X^n$ and $X^{n+1} = X^1$. This model is continuous in time and discrete in space. We assume that $X^1, X^2 \dots, X^n$ are equally spaced on a periodic domain of length one and take the space increment to be $\Delta X = 1/n$. We require the linearisation of this model $M_i^{(l)}$, $i \in \{0,\dots,N-1\}$ to define $\mathcal{A}_3$, $\mathcal{A}_2$ and $\mathcal{A}_1$. In our experiments, we set $n=40$ and $F=8$, since the system shows chaotic behaviour with the latter value. The equations are integrated using a fourth order Runge-Kutta scheme \cite{Butcher87}. The time step is set to $\Delta t = 2.5 \times 10^{-2}$ and the system is run for $N=15$ time steps.

The assimilation system is set up for so-called identical twin experiments, by which synthetic data are generated using the same model as is used in the assimilation. We generate a reference, or "true", model trajectory $\bold{x}^t$ by running the Lorenz 96 model over the time window from prescribed initial conditions and with prescribed Gaussian model errors $\eta_i$. An initial background state $x^b$ and observations $y_i$ at each time $t_i$ are then generated by adding Gaussian noise to $\bold{x}^t$. Assimilation experiments are run using this background state and observations, assuming that the true state is unknown. The error covariance matrices that are used to generate the model error in $\bold{x}^t$ and the observation error in $y_i$ are also used for the assimilation, i.e. in the $3 \times 3$ block, $2 \times 2$ block and $1 \times 1$ block matrices. These error covariance matrices do not change over time. The observation error covariance matrix is $R_i=\sigma_o^2 I_{p_i}$, where $p_i$ is the number of observations at time $t_i$, (diagonal $R_i$ is a common choice in data assimilation experiments\cite{Freitag2018, Gratton2018}) and the model error covariance matrix is equal to the background error covariance matrix $Q_i=B=\sigma_b^2 C_b$, where $C_b$ is a Second-Order Auto-Regressive correlation matrix \cite{Daley91} with correlation length scale $1.5 \times 10^{-2}$. We have also performed numerical experiments with $Q_i = \sigma_q^2 C_q \neq B$, where $C_q$ is a Laplacian correlation matrix \cite{Johnson2005}, and $\sigma_q$ and $\sigma_b$ vary by a factor of two. We observed similar results to those presented here. In our experiments, the parameters are chosen so that the observations are close to the real values of the variables, and the background and the model errors are low, in particular, we set $\sigma_o=10^{-1}$, which is about $5$\% of the mean of the values in $\bold{x}^t$, and $\sigma_b = 5\times 10^{-2}$. $y_i$ consists of direct observations of the variables $X^j,\ j \in \{1,2,\dots,n\}$ at time $t_i$, hence the observation operator $\mathcal{H}_i$ is linear.

All computations are performed using Matlab R2016b. In particular, the eigenvalues are computed using the Matlab function \textit{eig}. If only extreme eigenvalues are needed, \textit{eigs} is used, and the extreme singular values are given by \textit{svds}.

\subsection{Eigenvalue bounds}
We present numerically calculated eigenvalue bounds and eigenvalues of $\mathcal{A}_3$, $\mathcal{A}_2$ and $\mathcal{A}_1$ and illustrate their change with the number of observations and the quality of the spectral estimates, presented in Section~\ref{sec:eigenvalues_theory}. 
We consider the following observation networks that have different numbers of observations ($p = \sum_{i=0}^N p_i$):
\begin{enumerate}[label=\alph*)] 
\item $1$ observation at the final time $t_{15}$,\label{obs_netw:1obs} 
\item $20$ observations, observing every  $8th$ model variable at every $4th$ time step (at times $t_{3},t_{7},t_{11},t_{15}$),  
\item $80$ observations, observing every $4th$ model variable at every $2nd$ time step (at times $t_{1},t_{3},t_{5},t_{7},t_{9},t_{11},t_{13},t_{15}$), \label{obs_netw:80obs}
\item $160$ observations, observing every $2nd$ model variable at every $2nd$ time step (at the same times as in observation network~\ref{obs_netw:80obs}),\label{obs_netw:160obs} 
\item $320$ observations, observing every $2nd$ model variable at every time step,  \label{obs_netw:320obs}
\item $640$ observations, fully observed system. \label{obs_netw:640obs}
\end{enumerate}

\begin{figure}[h!]
\begin{subfigure}[b]{0.5\linewidth}
  \centering
 \includegraphics[width=\linewidth]{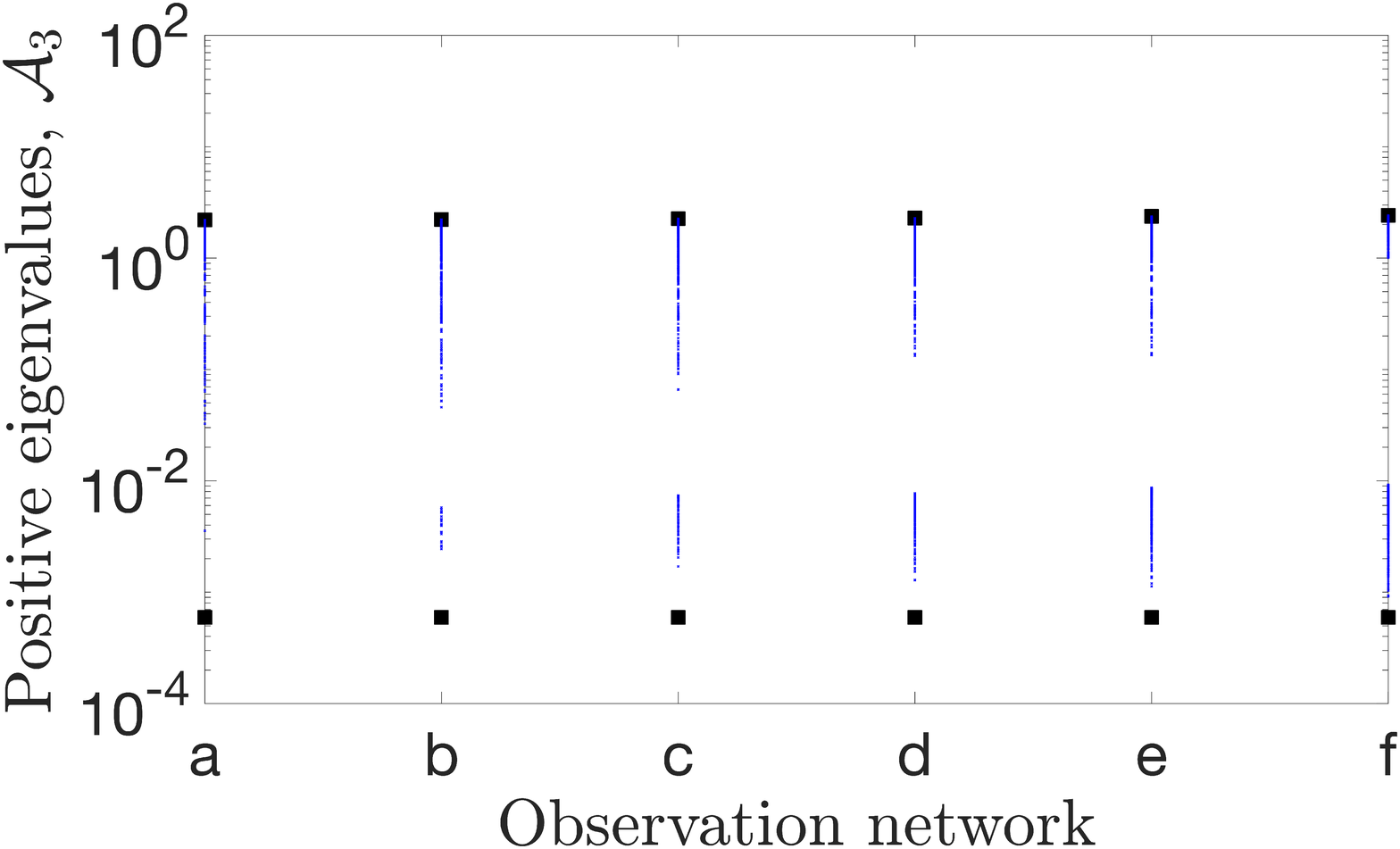}
   \caption{\label{fig:Figure1_i}}
\end{subfigure}
\begin{subfigure}[b]{0.5\linewidth}
  \centering
 \includegraphics[width=\linewidth]{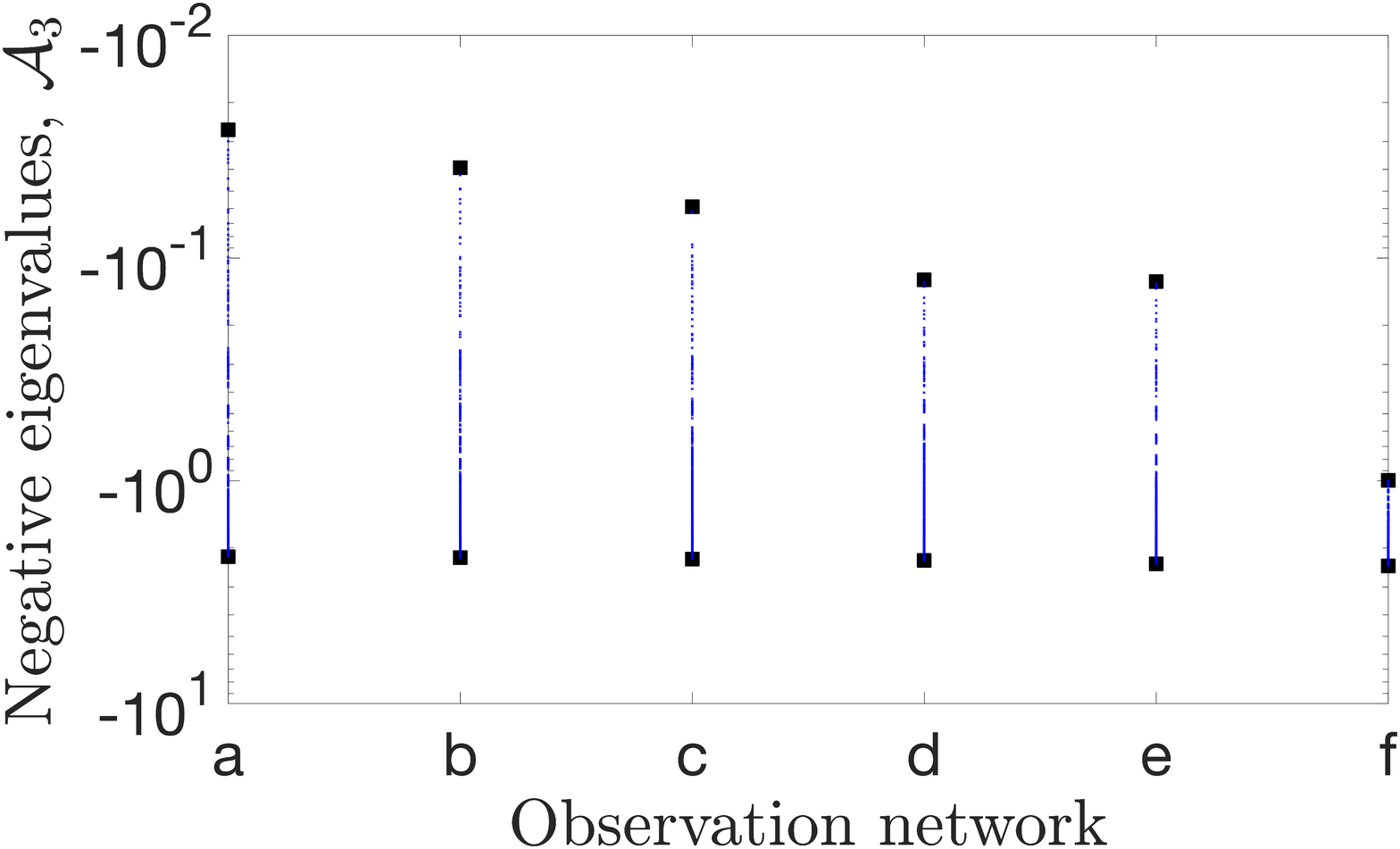}
     \caption{\label{fig:Figure1_ii}}
\end{subfigure}\\[1ex]
\begin{subfigure}[b]{0.5\linewidth}
  \centering
 \includegraphics[width=\linewidth]{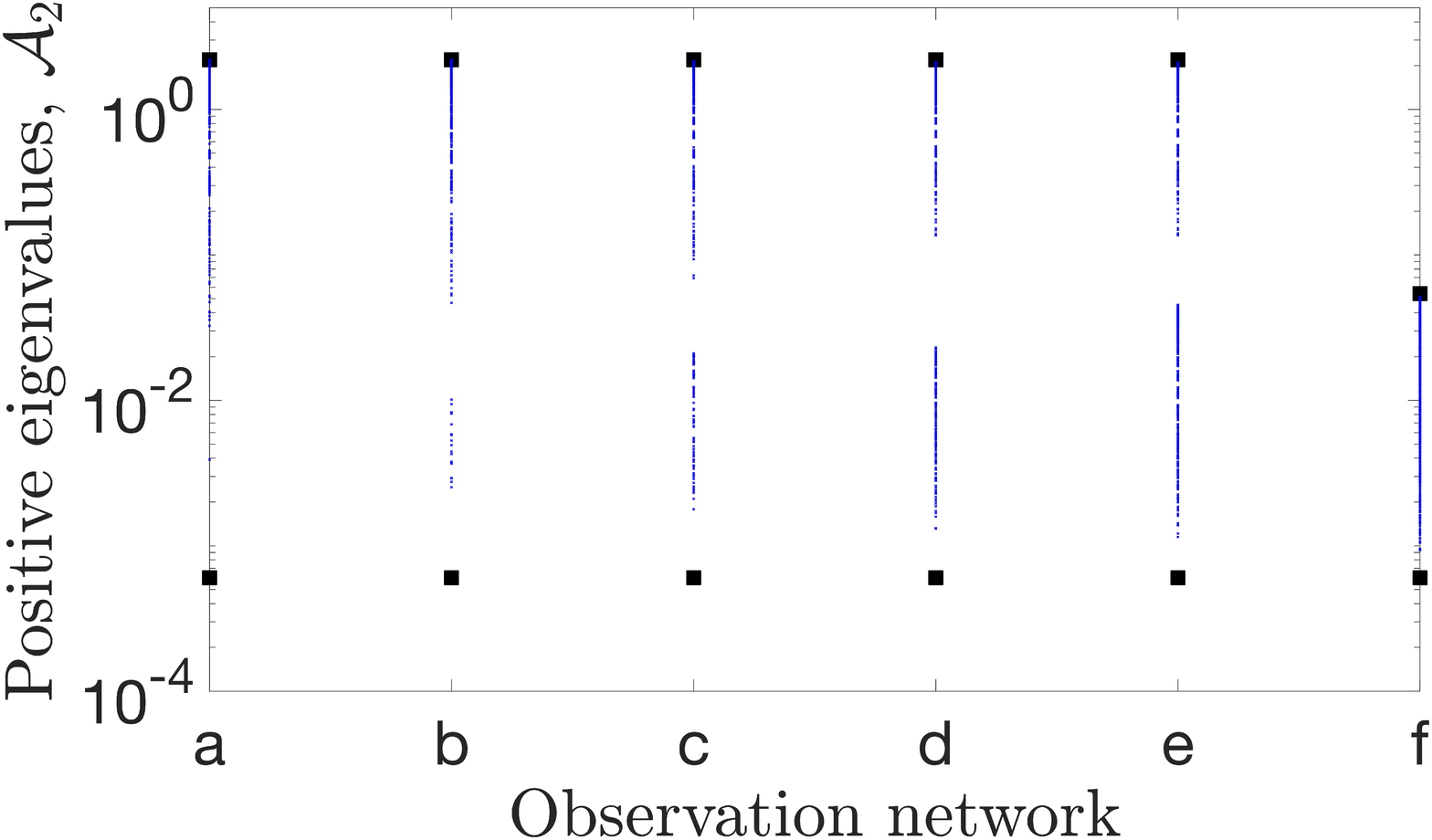}
     \caption{\label{fig:Figure1_iii}}
\end{subfigure}
\begin{subfigure}[b]{0.5\linewidth}
  \centering
 \includegraphics[width=\linewidth]{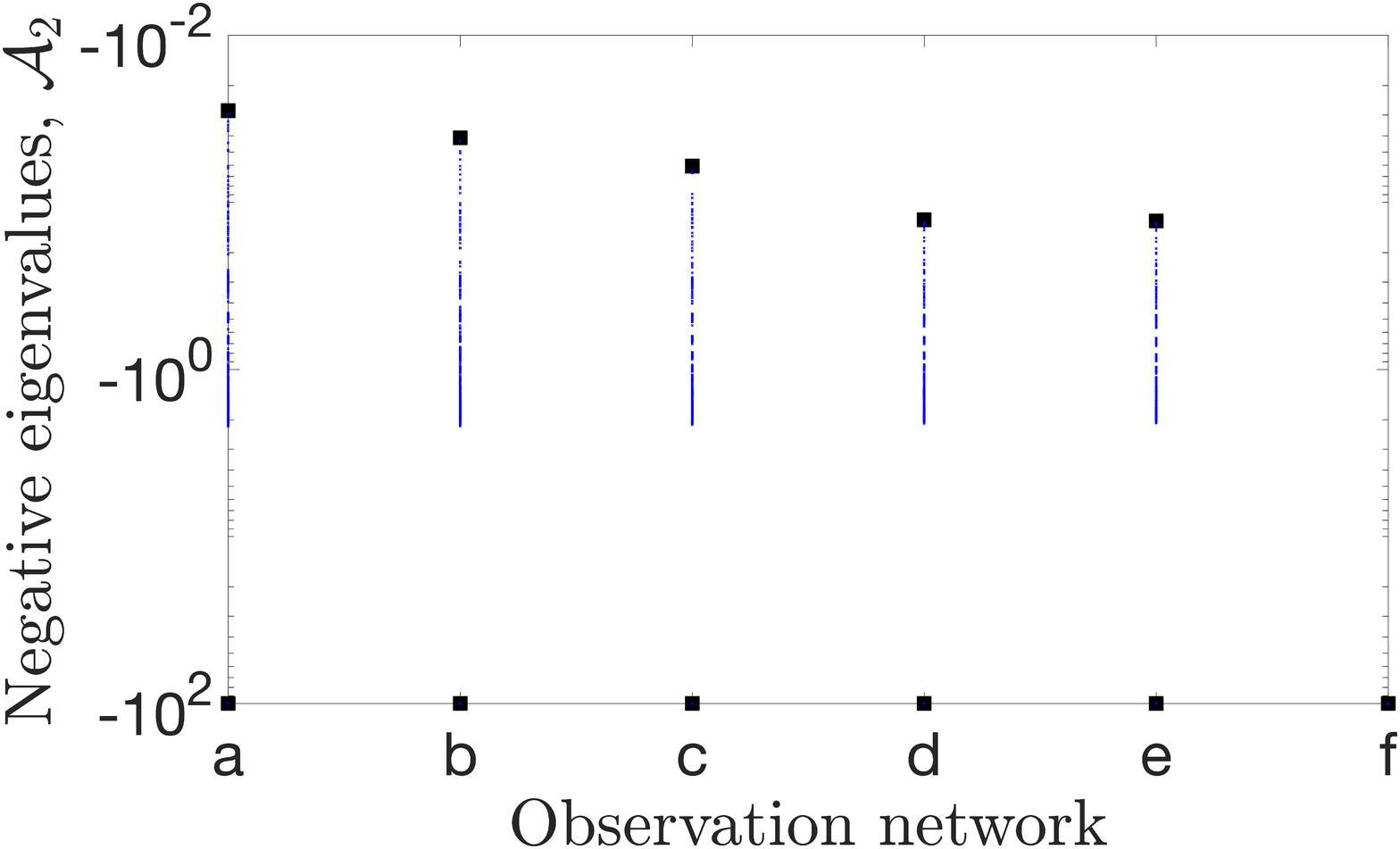}
     \caption{ \label{fig:Figure1_iv}}
\end{subfigure}\\[1ex]
\begin{subfigure}[b]{0.5\linewidth}
  \centering
 \includegraphics[width=\linewidth]{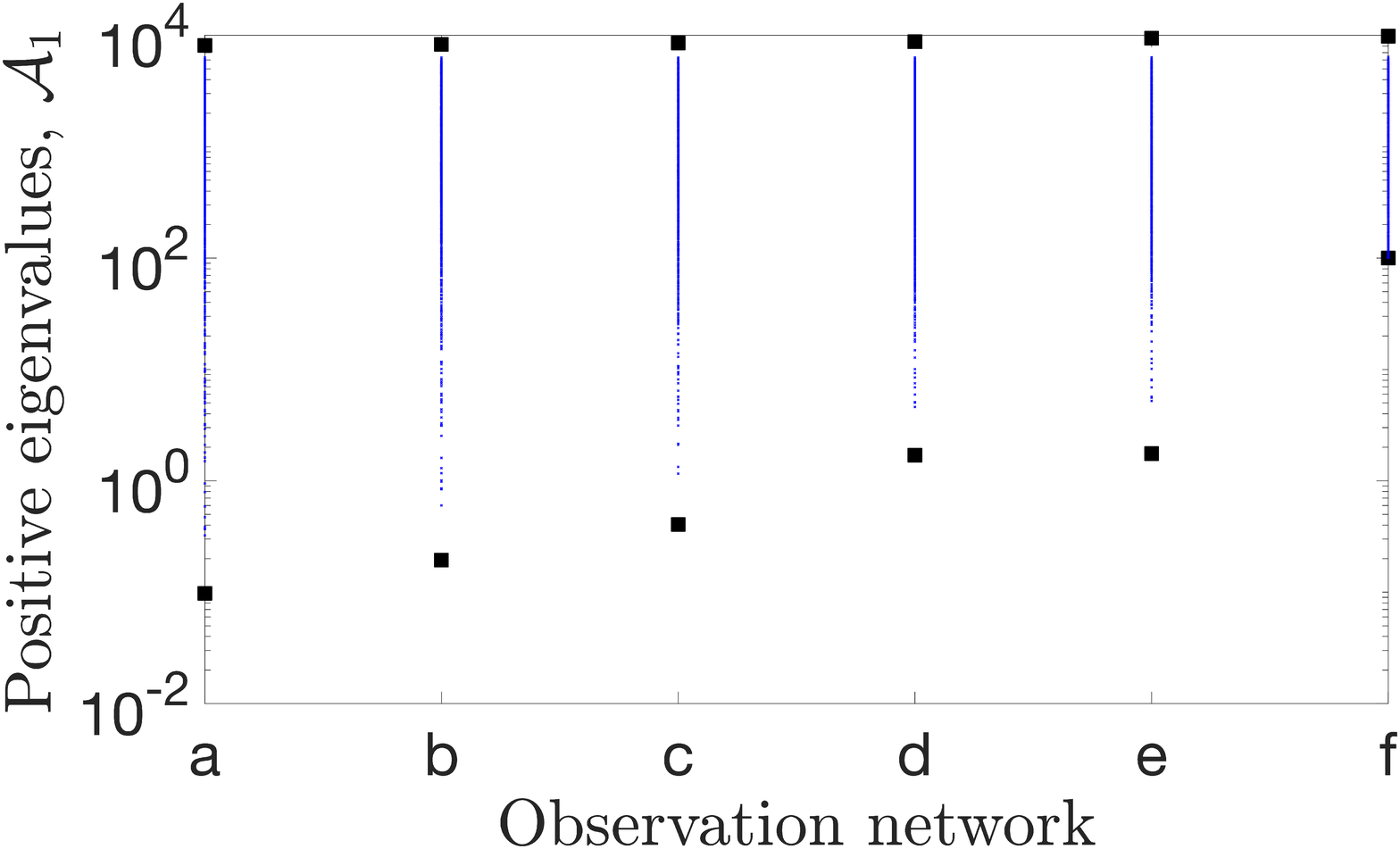}
     \caption{\label{fig:Figure1_v}}
\end{subfigure}\\[1ex]
\caption{Semi-logarithmic plots of the positive and negative eigenvalues of the matrices $\mathcal{A}_3$ (\subref{fig:Figure1_i} and \subref{fig:Figure1_ii}) and $\mathcal{A}_2$ (\subref{fig:Figure1_iii} and \subref{fig:Figure1_iv}), and the positive eigenvalues of $\mathcal{A}_1$ in \subref{fig:Figure1_v} for the different observation networks (a-f). Eigenvalues are denoted with merged blue dots. The filled black squares mark the bounds for eigenvalues of $\mathcal{A}_3$ in Theorem \ref{the:3x3eig}, $\mathcal{A}_2$ in Theorem \ref{2blockintervals}, and $\mathcal{A}_1$ in Theorem \ref{th:1x1eig}. Note that the smallest negative eigenvalues of $\mathcal{A}_2$ coincide with the bounds.}
\label{fig:Figure1}
\end{figure}

In Figure~\ref{fig:Figure1}, we plot the eigenvalues of the matrices $\mathcal{A}_3$, $\mathcal{A}_2$, and $\mathcal{A}_1$ together with the bounds from Theorems~\ref{the:3x3eig}, \ref{2blockintervals}, and \ref{th:1x1eig}, respectively, for each of the observation networks a-f. 
In these experiments, as expected from Theorem~\ref{th:3x3eig_change}, as the number of observations increases, the smallest and largest negative and the largest positive eigenvalues of $\mathcal{A}_3$ move away from zero and the smallest positive eigenvalue approaches zero. Also, as determined in Corollary~\ref{cor:3block_pos_bounds}, the upper bound for the positive eigenvalues of $\mathcal{A}_3$ presented in Figure~\ref{fig:Figure1_i} grows and the lower bound stays the same (because the eigenvalues of $\bold{R}$ do not change) when more observations are added. The change is too small to observe in the plots, hence we report the extreme eigenvalues of $\mathcal{A}_3$ and the intervals from Theorem~\ref{the:3x3eig} for the networks \ref{obs_netw:1obs}, \ref{obs_netw:80obs}, \ref{obs_netw:320obs} and \ref{obs_netw:640obs} in Table~\ref{table:bounds_eigenvalues_3x3}. Moreover, the negative bounds for the eigenvalues of $\mathcal{A}_3$ in Figure~\ref{fig:Figure1_ii} move away from zero. This agrees with Corollary \ref{cor:3block_neg_bounds}, because here $\tau_{min}=\psi_{min}$. However, in this setting $\tau_{max}=\rho_{max}$ and the same Corollary cannot be used to explain the change to the upper bound. In general, the outer bounds (the largest positive and the smallest negative) for the eigenvalues of $\mathcal{A}_3$ are tight and the inner bounds (the smallest positive and the largest negative) get tighter as the number of observations increases.
  
The positive eigenvalues of $\mathcal{A}_2$ displayed in Figure~\ref{fig:Figure1_iii} approach zero as observations are added, whereas the negative eigenvalues in Figure~\ref{fig:Figure1_iv} move away from it. This is consistent with Theorem~\ref{th:eigenvalues_2x2}, which holds for this experiment because we have chosen diagonal $\bold{R}$. The lower bounds for the positive and negative eigenvalues of $\mathcal{A}_2$ stay the same when the observation network is changed. In these bounds only $\nu_{max}$ (the largest eigenvalue of $\bold{H}^T \bold{R}^{-1} \bold{H}$) depends on the observations. In our experiments, $\nu_{max}$ does not change because of our choice of $\bold{H}$ and $\bold{R}$. The constant negative lower bound is consistent with Corollary~\ref{cor:2block_neg_lower_diag_R}. The numerical values of the intervals from Theorem~\ref{2blockintervals} and of the extreme eigenvalues of $\mathcal{A}_2$ for the networks \ref{obs_netw:1obs}, \ref{obs_netw:80obs}, \ref{obs_netw:320obs} and \ref{obs_netw:640obs} are presented in Table~\ref{table:bounds_eigenvalues_2x2}. The upper positive bound moves towards zero when the system becomes fully observed and is constant for the other networks, because the smallest eigenvalue $\nu_{min}$ of $\bold{H}^T \bold{R}^{-1} \bold{H}$ is non zero only for the fully observed system. The negative upper bound for the spectrum of $\mathcal{A}_2$ is given by $\beta_1$ in \eqref{2bnegupper1} for the fully observed system and $\beta_3$ in \eqref{2bnegupper3} otherwise, and moves away from zero, in agreement with Corollary~\ref{cor:2block_neg_upper}. Notice that the eigenvalue bounds are tight. Also, the numerical results confirm the statement of Corollary \ref{corollary:pos_eigen_bounds} that the interval for the positive eigenvalues of $\mathcal{A}_3$ contains the bounds for positive eigenvalues of $\mathcal{A}_2$.

Note that $\mathcal{A}_2$ has $p$ distinct eigenvalues that coincide with the negative lower bound in the plots. The distinct eigenvalues are explained by the bounds for individual eigenvalues in Corollary~\ref{cor:distinct_eig} in Appendix~\ref{ap:indiv_bounds_2x2}, because in our experiments $\bold{H}^T  \bold{R}^{-1} \bold{H}$ has eigenvalues that are equal to $\sigma_o^{-2}=10^{2}$ and the largest singular value $\sigma_{max}$ of $\bold{L}$ is less than $10$. Hence, there are $p$ eigenvalues of $\mathcal{A}_2$ in the interval $[-110,-90]$ and $(N+1)n-p$ eigenvalues no further than 10 from zero.

The eigenvalues of $\mathcal{A}_1$ and their bounds presented in Figure~\ref{fig:Figure1_v} move away from zero when more observations are used. This is as expected, because Theorem~\ref{th:eig1x1change} holds for our choice of diagonal $\bold{R}$. The variation of the bounds is explained by the fact that with our choice of $\bold{R}$ values of $\tau_{min}$ and $\tau_{max}$ do not change, and $\theta_{min}$ and $\theta_{max}$ grow. Such behaviour of the upper bound agrees with Corollary \ref{cor:1block_upper}. However, as can be seen in Table~\ref{table:bounds_eigenvalues_1x1} the upper value of the intervals in Theorem~\ref{th:1x1eig} are too pessimistic.

Better eigenvalue clustering away from zero when more observations are used can speed up the convergence of iterative solvers when solving the $1 \times 1$ block formulation. However, nothing definite can be said about the $3 \times 3$ block and $2 \times 2$ block formulations: the negative eigenvalues become more clustered, but the smallest positive eigenvalues approach zero when new observations are introduced.

\begin{table}
\centering
 \begin{tabular}{|c| c| c|c|c|} 
 \hline
 \rowcolor{lightgray}  O.n.  &  $I_-$  &  \cellcolor{lightgray}  Eigenvalues &  $I_+$ &  \cellcolor{lightgray}  Eigenvalues \\ 
 \hline
 \cellcolor{lightgray} \ref{obs_netw:1obs}  &  $ [-2.193,-2.66\times 10^{-2}]$   &  \cellcolor{lightgray} $[-2.192, -2.99\times 10^{-2}]$  & $[5.93\times 10^{-4},2.198]$ &  \cellcolor{lightgray}$[3.56\times 10^{-3}, 2.195]$
\\
 \hline
 \cellcolor{lightgray}  \ref{obs_netw:80obs}  &  $[-2.249,  -5.88\times 10^{-2}]$ &  \cellcolor{lightgray} $[ -2.247, -6.18\times 10^{-2}]$  &  $[5.93\times 10^{-4}, 2.254]$ &  \cellcolor{lightgray} $[1.70\times 10^{-3},  2.251]$\\
 \hline
 \cellcolor{lightgray} \ref{obs_netw:320obs}  & $[-2.360,-1.28\times 10^{-1}]$   &  \cellcolor{lightgray} $[-2.358,-1.31\times 10^{-1}]$ & $[5.93\times 10^{-4},2.365]$  &  \cellcolor{lightgray}  $[1.13\times 10^{-3},2.362]$ \\
 \hline
 \cellcolor{lightgray}  \ref{obs_netw:640obs} & $[-2.410,-9.96\times 10^{-1}]$ &  \cellcolor{lightgray} $[-2.408,-9.96\times 10^{-1}]$ &   $[5.93\times 10^{-4}, 2.416]$ & \cellcolor{lightgray}$[9.14\times 10^{-4}, 2.413]$ \\
 \hline
\end{tabular}
\caption{Computed spectral intervals and extreme eigenvalues of $\mathcal{A}_3$ from Theorem~\ref{the:3x3eig} for different observation networks (O.n.).}
\label{table:bounds_eigenvalues_3x3}
\end{table}

\begin{table}
\centering
 \begin{tabular}{|c| c| c|c|c|} 
 \hline
 \rowcolor{lightgray} O.n.   &  $I_-$  &  \cellcolor{lightgray}  Eigenvalues &  $I_+$ &  \cellcolor{lightgray}  Eigenvalues \\ 
 \hline
 \cellcolor{lightgray}  \ref{obs_netw:1obs}   &  $[-1.0005\times 10^2,-2.83\times 10^{-2}]$   &  \cellcolor{lightgray} $[-1.0001 \times 10^2,  -2.99\times 10^{-2}]$  & $[6.03\times 10^{-4},2.196] $&  \cellcolor{lightgray}$[3.91\times 10^{-3}, 2.195]$
\\
 \hline
 \cellcolor{lightgray}  \ref{obs_netw:80obs}  &  $[-1.0005\times 10^2,-6.07\times 10^{-2}]$  &  \cellcolor{lightgray} $[-1.0002\times 10^2,-6.50\times 10^{-2}]$  &  $[6.03\times 10^{-4},  2.196]$ &  \cellcolor{lightgray} $[1.78\times 10^{-3}, 2.148]$\\
 \hline
 \cellcolor{lightgray}  \ref{obs_netw:320obs} &  $[-1.0005\times 10^2,-1.29\times 10^{-1}]$     &  \cellcolor{lightgray} $[-1.0004\times 10^2,-1.33\times 10^{-1}]$  & $[6.03\times 10^{-4}, 2.196]$ &  \cellcolor{lightgray}  $[1.15\times 10^{-3}, 2.101]$ \\
 \hline
 \cellcolor{lightgray}  \ref{obs_netw:640obs} & $[-1.0005\times 10^2,-1.00\times 10^2]$ & \cellcolor{lightgray} $[-1.0005\times 10^2,-1.00\times 10^2]$ & $[6.03\times 10^{-4},5.42\times 10^{-2}]$ & \cellcolor{lightgray}$[9.35\times 10^{-4},5.15\times 10^{-2}]$\\
 \hline
\end{tabular}
\caption{Computed spectral intervals and extreme eigenvalues of $\mathcal{A}_2$ from Theorem~\ref{2blockintervals} for different observation networks (O.n.).}
\label{table:bounds_eigenvalues_2x2}
\end{table}  

\begin{table}
\centering
 \begin{tabular}{|c|c|c|} 
 \hline
 \rowcolor{lightgray} O.n.   &  $I_+$ &  \cellcolor{lightgray}  Eigenvalues \\ 
 \hline
 \cellcolor{lightgray} \ref{obs_netw:1obs}   &  $[9.72 \times 10^{-2}, 8.11 \times 10^3]$    &  \cellcolor{lightgray}$[3.23\times 10^{-1}, 6.30 \times 10^3]$
\\
 \hline
 \cellcolor{lightgray}  \ref{obs_netw:80obs} &  $[4.05 \times 10^{-1},8.53 \times 10^3]$  &  \cellcolor{lightgray} $[1.16, 6.32 \times 10^3]$\\
 \hline
 \cellcolor{lightgray}  \ref{obs_netw:320obs} &  $[1.75, 9.40  \times 10^3]$     &  \cellcolor{lightgray}  $[5.21, 6.35 \times 10^3]$ \\
 \hline
 \cellcolor{lightgray}  \ref{obs_netw:640obs} & $[1.00 \times 10^2, 9.80  \times 10^3]$  & \cellcolor{lightgray}$[1.00 \times 10^2,6.40 \times 10^3]$\\
 \hline
\end{tabular}
\caption{Computed spectral intervals and extreme eigenvalues of $\mathcal{A}_1$ from Theorem~\ref{th:1x1eig} with different observation networks (O.n.).}
\label{table:bounds_eigenvalues_1x1}
\end{table}  

We also calculate the alternative eigenvalue bounds given in Theorems~\ref{th:3x3_eig_bounds_AN2006} and \ref{th:2x2_eig_bounds_AN2006}. With the choice of parameters and observations considered in this section, the bounds given in these theorems are not as sharp as those in Theorems~\ref{the:3x3eig} and \ref{2blockintervals}. However, this is not always the case, as is illustrated in Tables~\ref{table:alternative_bounds3x3} and \ref{table:alternative_bounds2x2}. Here $\sigma_o=1.5$, $\sigma_b = 1$ and the observation network \ref{obs_netw:160obs} is used. 

\begin{table}
\centering
 \begin{tabular}{| c|c|c|} 
 \hline
\rowcolor{lightgray} Eigenvalues of $\mathcal{A}_3 $  &  \cellcolor{lightgray} Bounds from Th.~\ref{the:3x3eig}  &  Bounds from Th.~\ref{th:3x3_eig_bounds_AN2006} \\ 
 \hline
   $ [-1.93,-1.38\times 10^{-2}]$   &  \cellcolor{lightgray} $[-2.17,-5.83\times 10^{-3}]$  & $[-5.10,-1.33\times 10^{-2}]$ \\
   $[2.98\times 10^{-1},3.59]$ &  \cellcolor{lightgray} $[2.37\times 10^{-1},3.81]$  &  $[2.37\times 10^{-1}, 7.53]$ \\
 \hline
\end{tabular}
\caption{Computed spectral intervals and extreme eigenvalues of $\mathcal{A}_3$ from Theorems~\ref{the:3x3eig} and \ref{th:3x3_eig_bounds_AN2006} for observation network \ref{obs_netw:160obs} with $\sigma_o=1.5$ and $\sigma_b = 1$.}
\label{table:alternative_bounds3x3}
\end{table}

\begin{table}
\centering
 \begin{tabular}{|c|c|c|} 
 \hline
 \rowcolor{lightgray}  Eigenvalues of  $\mathcal{A}_2 $ &  \cellcolor{lightgray} Bounds from Th.~ \ref{2blockintervals}  &  Bounds from Th.~\ref{th:2x2_eig_bounds_AN2006} \\ 
 \hline
 $ [-1.97,-1.39\times 10^{-2}]$   &  \cellcolor{lightgray} $[-2.33,-5.83\times 10^{-3}]$  & $[-15.79, -1.33\times 10^{-2}]$ \\
 $[3.00\times 10^{-1},3.51]$ &  \cellcolor{lightgray} $[2.38\times 10^{-1},3.74]$  &  $[2.37\times 10^{-1},7.51]$ \\
 \hline
\end{tabular}
\caption{Computed spectral intervals and extreme eigenvalues of $\mathcal{A}_2$ from Theorems~\ref{2blockintervals} and \ref{th:2x2_eig_bounds_AN2006} for observation network \ref{obs_netw:160obs} with $\sigma_o=1.5$ and $\sigma_b = 1$.}
\label{table:alternative_bounds2x2}
\end{table}

\subsection{Solving the systems}
We solve the $3 \times 3$ block, $2 \times 2$ block, and $1 \times 1$ block systems with the coefficient matrices discussed in the previous subsection, and the right hand sides defined in \eqref{eq:saddle}, \eqref{2block_system}, and \eqref{eq:1block_system}, respectively. 
The saddle point systems are solved with MINRES and the symmetric positive definite systems are solved with CG. The relative residual at the $j$-th iteration of the iterative method is defined as $||\bold{r}_j||/||\bold{r}_0||$, where  $|| \cdot ||$  is the $L_2$ norm and $\bold{r}_j$ is the residual on iteration $j$. The iterative method terminates after $400$ iterations or when the relative residual reaches $10^{-4}$. The initial guess is taken to be the zero vector.

In Figure~\ref{fig:Figure2}, we plot the relative residuals. Note that the residual reaches $10^{-4}$ in the fully observed case (observation network~\ref{obs_netw:640obs}) when solving each of the systems and convergence is most rapid in this case. This is expected because of the clustering of the eigenvalues. The convergence rates are similar for networks d and e, which is consistent with Figure~\ref{fig:Figure1}. The convergence of MINRES for the observation network \ref{obs_netw:1obs} with a single observation is not explained by the spectra of  $\mathcal{A}_3$ and $\mathcal{A}_2$. However, the convergence in other cases agrees with our eigenvalue analysis.

\begin{figure}[h]
\begin{subfigure}[b]{0.5\linewidth}
  \centering
 \includegraphics[width=\linewidth]{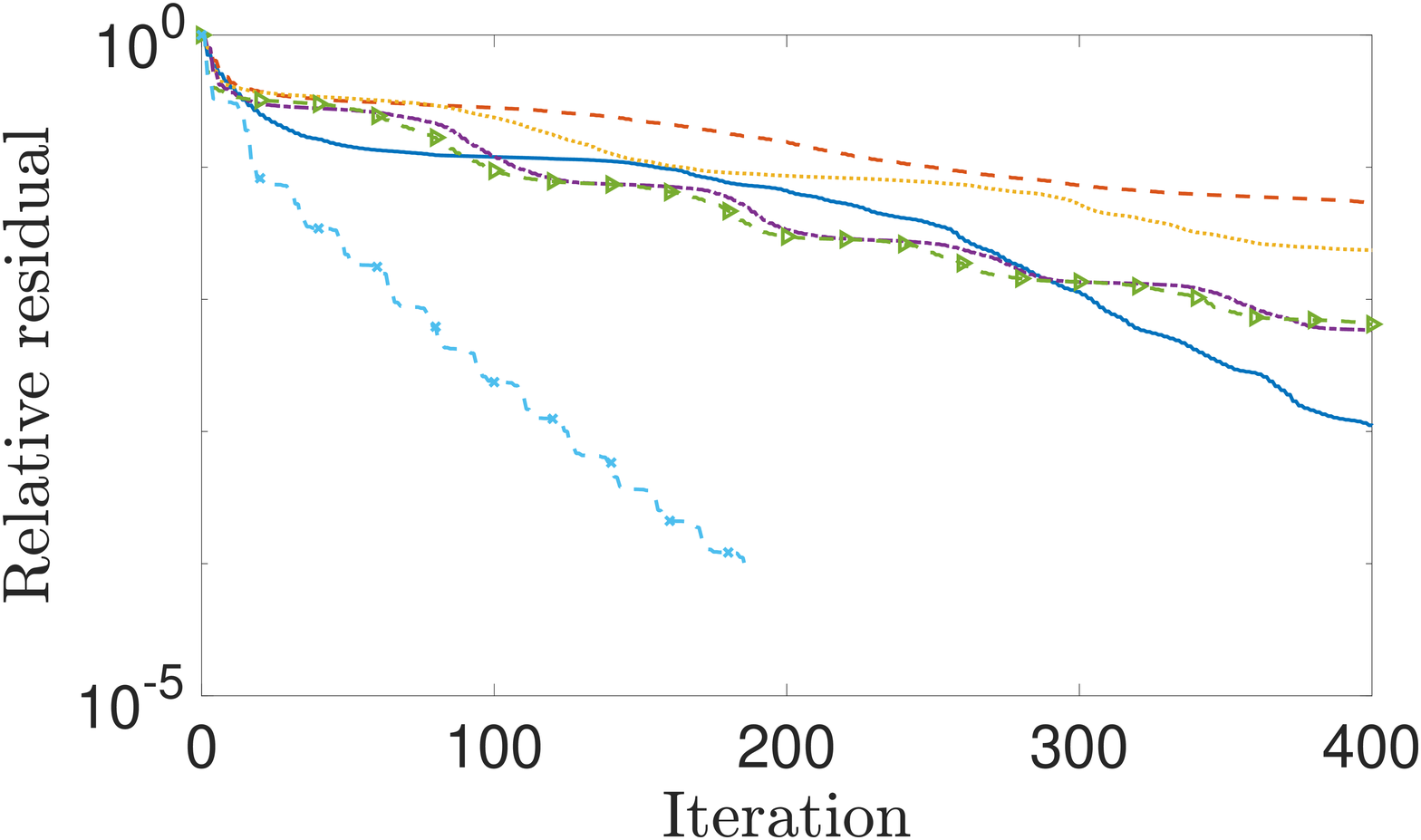}
   \caption{\label{fig:Figure2_i}}
\end{subfigure}
\begin{subfigure}[b]{0.5\linewidth}
  \centering
 \includegraphics[width=\linewidth]{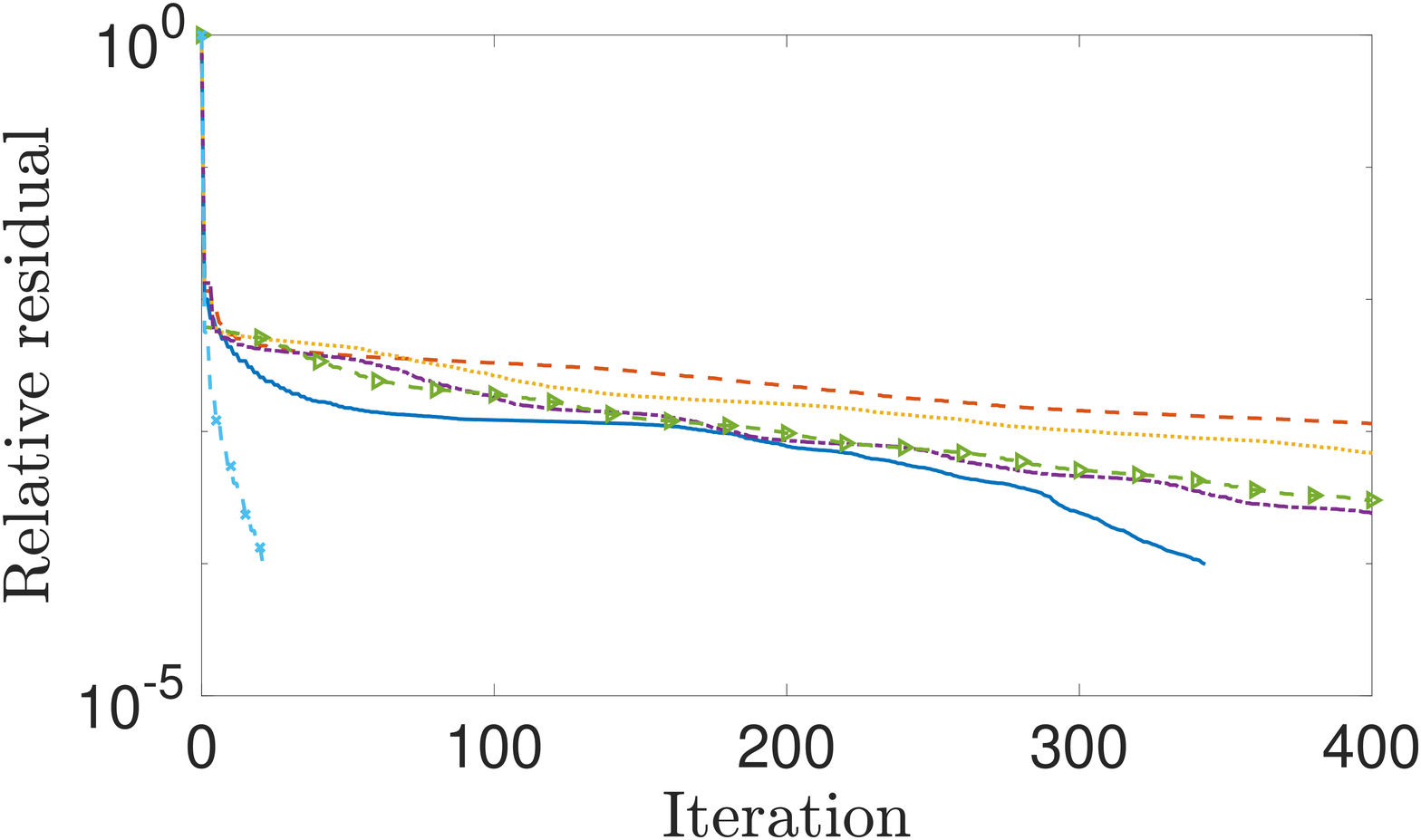}
     \caption{\label{fig:Figure2_ii}}
\end{subfigure}\\[1ex]
\begin{subfigure}[b]{0.5\linewidth}
  \centering
 \includegraphics[width=\linewidth]{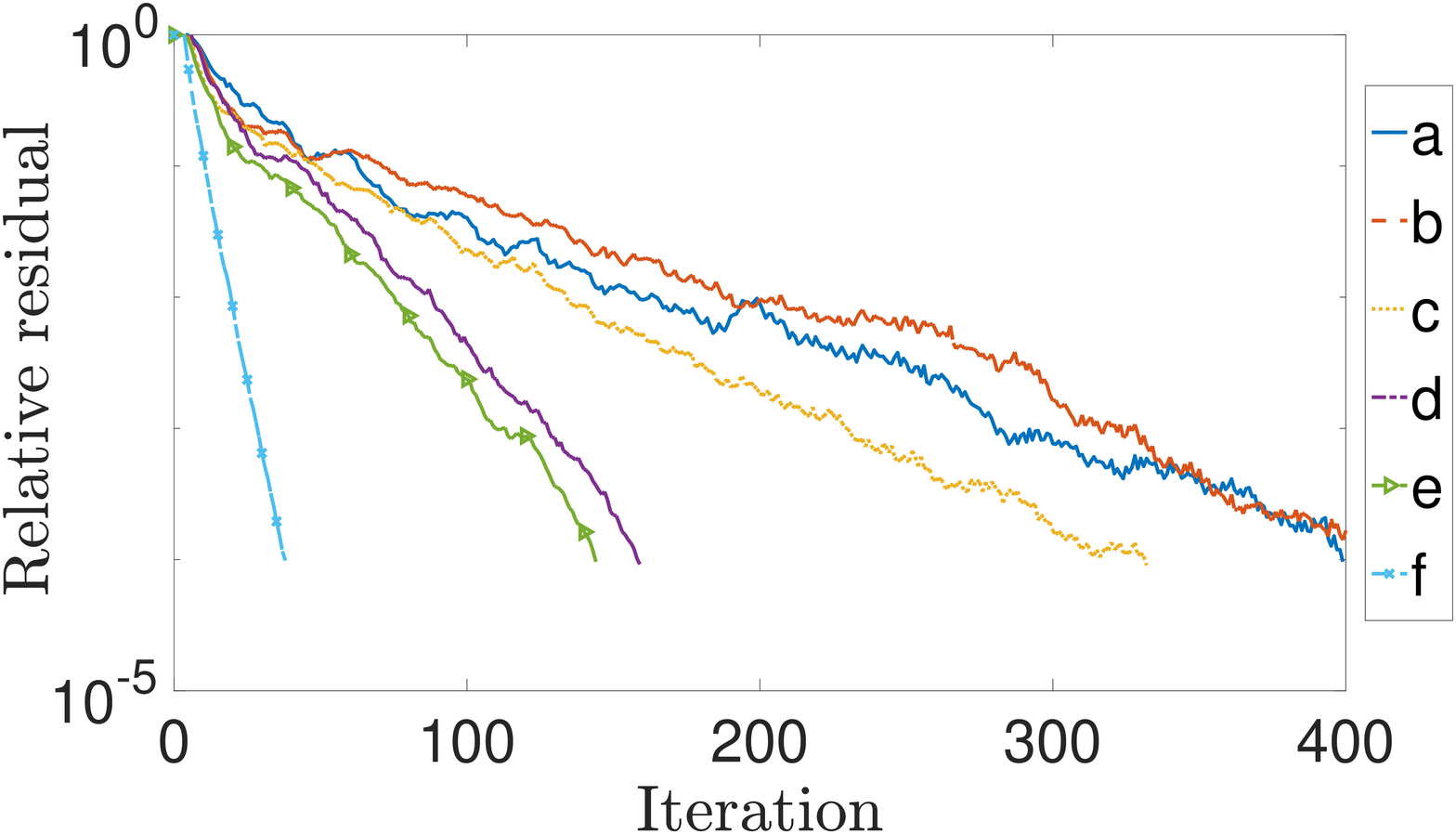}
     \caption{\label{fig:Figure2_iii}}
\end{subfigure}\\[1ex]
\caption{Semi-logarithmic plots of the relative residual of MINRES when solving the $3 \times 3$ block (I) and $2 \times 2$ block (II) systems, and the relative residual of CG when solving the $1 \times 1$ block (III) system for different observation networks (a-f). }
\label{fig:Figure2}
\end{figure}

\section{Conclusions}\label{sec:conclusions}
Weak constraint 4D-Var data assimilation requires the minimisation of a cost function in order to obtain an estimate of the state of a dynamical system. Its solution can be approximated by solving a series of linear systems. We have analysed three different formulations of these systems, namely the standard system with $1 \times 1$ block symmetric positive definite coefficient matrix $\mathcal{A}_1$, a new system with a $2 \times 2$ block saddle point coefficient matrix $\mathcal{A}_2$, and the version with $3 \times 3$ block saddle point coefficient matrix $\mathcal{A}_3$ of Fisher and G{\"u}rol\cite{Fisher17}. We have focused on the dependency of the coefficient matrices on the number of observations. 

We have found that the spectra of $\mathcal{A}_3$, $\mathcal{A}_2$ and $\mathcal{A}_1$ are sensitive to the number of observations and examined how they change when new observations are added. The results hold with any choice of the blocks in $\mathcal{A}_3$, whereas we can only make inference about the change of the spectra of $\mathcal{A}_2$ and $\mathcal{A}_1$ for uncorrelated observation errors (diagonal $\bold{R}$). We have shown that the negative eigenvalues of both $\mathcal{A}_3$ and $\mathcal{A}_2$ move away from zero or are unchanged when observations are added. The smallest and largest positive eigenvalues of $\mathcal{A}_2$, as well as the smallest positive eigenvalue of $\mathcal{A}_3$, approach zero or are unchanged, whereas the largest positive eigenvalue of $\mathcal{A}_3$ moves away from zero or is unchanged. The smallest and largest eigenvalues of $\mathcal{A}_1$ move away from zero or are unchanged. The extreme eigenvalues may cause convergence problems for Krylov subspace solvers, hence we may expect the small positive eigenvalues of $\mathcal{A}_2$ and $\mathcal{A}_3$ to cause these issues when new observations are added. We summarise these results together with the properties of the three systems in Table~\ref{table:all_formulations_summary}.

We have used the work of Rusten and Winther\cite{RustenWinther1992} to determine the bounds for the spectrum of $\mathcal{A}_3$ and derived novel bounds for the spectral intervals of the saddle point matrix $\mathcal{A}_2$ and the positive definite matrix $\mathcal{A}_1$. We have observed that the change to the intervals due to new observations is consistent with the change of the extreme eigenvalues of the matrices. Our numerical experiments agree with these findings. In general, the bounds for the saddle point matrices are tight whereas the upper bounds for the positive definite matrix are too pessimistic.

Our numerical experiments show slow convergence, particularly with a few observations, and the need for preconditioning is evident. Previous work on the $3 \times 3$ block saddle point system considered iteratively solving the problem when inexact constraint preconditioners of Bergamaschi et al.\cite{Bergamaschi07} are used (see, Fisher and G{\"u}rol\cite{Fisher17}, Freitag and Green\cite{Freitag2018}, Gratton et al.\cite{Gratton2018}). It was shown that such a preconditioning approach is not optimal and further research into effective preconditioning is still an open question. Preconditioning may transform the coefficient matrix into a non-normal one with GMRES as an iterative solver of choice. Although the spectrum of a non-normal matrix may not be enough to describe the convergence of GMRES \cite{Greenbaum1996}, Benzi et al. \cite{Benzi2005} claim that fast convergence often appears if the spectrum is clustered away from the origin. Hence, a better understanding of the spectrum of $\mathcal{A}_3$, $\mathcal{A}_2$ and $\mathcal{A}_1$ may help in finding a suitable preconditioner for these matrices. We suggest that including the information on observations coming from the observation error covariance matrix $\bold{R}$ and the linearised observation operator $\bold{H}$ could be beneficial for preconditioning, given that the spectra of all the considered matrices depend on the observations. A design of such preconditioners that are cheap to construct and apply is an interesting area for future research.

\begin{table}
\centering
 \begin{tabular}{|c| c| c|c|} 
 \hline
& $\mathcal{A}_3$ & $\mathcal{A}_2$   & $\mathcal{A}_1$  \\ 
 \hline
 Type & Symmetric  indefinite & Symmetric  indefinite & \begin{tabular}{c} Symmetric positive\\ definite \end{tabular} \\
  \hline
  Iterative solver &  MINRES/SYMMLQ & MINRES/SYMMLQ & CG\\ 
\hline
Order & $2(N+1)n+p$ & $2(N+1)n$ & $(N+1)n$  \\
  \hline
 $\bold{D}^{-1}$ needed & No & No & Yes \\
  \hline
 $\bold{R}^{-1}$ needed & No  & Yes  & Yes \\
  \hline
\begin{tabular}{c} Sequential matrix \\ products \end{tabular} & None & $\bold{H}^T \bold{R}^{-1} \bold{H}$ & \begin{tabular}{c}$\bL^T \bold{D}^{-1} \bL $, \\ $\bold{H}^T \bold{R}^{-1} \bold{H}$ \end{tabular} \\
 \hline
 \begin{tabular}{c}Eigenvalues that may move towards \\ zero with new observations  \end{tabular} & Smallest positive & Positive* & None*  \\
\hline
\begin{tabular}{c} Eigenvalues that may move away from \\  zero with new observations  \end{tabular} &\begin{tabular}{c} Largest positive,\\ negative  \end{tabular}  & Negative*  & All*  \\
\hline
\end{tabular}
\caption{A summary of the properties of the $3 \times 3$ block, $2 \times 2$ block, and $1 \times 1$ systems. * applies to systems with diagonal $\bold{R}$. }
\label{table:all_formulations_summary}
\end{table}

\section*{Acknowledgments}
We would like to kindly thank Dr. Adam El-Said for his code for the weak-constraint 4D-Var assimilation system. We are also grateful to two anonymous reviewers for their constructive comments that have led to improvements to the paper. This work does not have any conflicts of interest.

\section*{Funding information}
UK Engineering and Physical Sciences Research Council, Grant/Award Number: EP/L016613/1; European Research Council CUNDA project, Grant/Award Number: 694509; NERC National Centre for Earth Observation.

\appendix
\section{Bounds for individual eigenvalues of $\mathcal{A}_3$ and $\mathcal{A}_2$}\label{ap:indiv_bounds_2x2}
We derive bounds for the individual eigenvalues of $\mathcal{A}_3$ and $\mathcal{A}_2$ (Theorems~\ref{th:indiv_bounds3x3} and \ref{th:indiv_bounds2x2}, respectively). First, we state two theorems that are used in deriving these bounds. The notation of Table~\ref{tab:notation_eigv_sv} is used.

\begin{theorem}[See Theorem 3 in Silvester \cite{Silvester2000}]\label{th:block_det}
If $A = \left( \begin{array}{cc}
C & E \\
F & G\\
\end{array} \right) $ , $C, E, F, G \in \mathbb{R}^{n \times n}$, and $FG = GF$, then
\begin{center}
$ det(A) = det(CG - EF)$.
\end{center}
\end{theorem}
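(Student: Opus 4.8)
The plan is to prove the identity first under the additional assumption that $G$ is invertible, using an explicit block factorization that converts the commutativity hypothesis into a triangularization, and then to remove the invertibility assumption by a polynomial (continuity) argument.

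For the invertible case I would multiply $A$ on the right by the block lower-triangular matrix $\bigl(\begin{smallmatrix} G & 0 \\ -F & I \end{smallmatrix}\bigr)$, obtaining
\[
\begin{pmatrix} C & E \\ F & G \end{pmatrix}\begin{pmatrix} G & 0 \\ -F & I \end{pmatrix} = \begin{pmatrix} CG - EF & E \\ FG - GF & G \end{pmatrix} = \begin{pmatrix} CG - EF & E \\ 0 & G \end{pmatrix},
\]
where the last equality is exactly where $FG = GF$ is used. Taking determinants and using that the determinant of a block-triangular matrix is the product of the determinants of its diagonal blocks gives $\det(A)\det(G) = \det(CG - EF)\det(G)$, and dividing by $\det(G) \neq 0$ yields $\det(A) = \det(CG - EF)$.

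For general $G$ I would perturb: choose $\varepsilon$ outside the finite set $\{-\lambda : \lambda \text{ an eigenvalue of } G\}$, so that $G + \varepsilon I$ is invertible. Since $F(G + \varepsilon I) = FG + \varepsilon F = GF + \varepsilon F = (G + \varepsilon I)F$, the commutativity hypothesis is preserved, so the case already proved applies to $\bigl(\begin{smallmatrix} C & E \\ F & G+\varepsilon I \end{smallmatrix}\bigr)$ and gives
\[
\det\begin{pmatrix} C & E \\ F & G + \varepsilon I \end{pmatrix} = \det\bigl(C(G + \varepsilon I) - EF\bigr) = \det(CG - EF + \varepsilon C).
\]
Both sides are polynomials in $\varepsilon$ that agree for all but finitely many values, hence coincide identically; evaluating at $\varepsilon = 0$ gives the result.

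I expect the only genuine subtlety to be the passage from invertible $G$ to arbitrary $G$: one must check that the perturbation $G \mapsto G + \varepsilon I$ keeps the hypothesis $FG = GF$ intact (it does, as noted) so that the density argument is legitimate. Everything else is routine block-matrix bookkeeping. An alternative to the real-variable continuity argument is to regard the entries as indeterminates, or to perturb over $\mathbb{C}$ and invoke that a polynomial vanishing at infinitely many points is identically zero; the structure of the proof is unchanged.
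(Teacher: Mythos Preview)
Your proof is correct. Note, however, that the paper does not actually prove this statement: it is quoted verbatim from Silvester's article and simply cited there, so there is no ``paper's own proof'' to compare against. Your argument---the block-triangularization via right-multiplication by $\bigl(\begin{smallmatrix} G & 0 \\ -F & I \end{smallmatrix}\bigr)$, followed by a perturbation $G \mapsto G + \varepsilon I$ to remove the invertibility assumption---is exactly the proof Silvester gives in the cited reference, so you have reconstructed the original source's argument.
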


\begin{theorem}[Jordan-Wielandt Theorem, see Theorem 4.2 in Chapter 1 of Stewart and Sun \cite{Stewart1990}]\label{th:jordan-wielandt}
Let 
\begin{center}
$U^HAV = \left( \begin{array}{cc}
\Sigma & 0 \\
0 & 0\\
\end{array} \right)$, $\Sigma = diag(\sigma_1, \cdots, \sigma_n)$
\end{center} 
be the singular value decomposition of $A\in \mathbb{C^{m \times n}}$, $m \geq n$. Then the eigenvalues of the matrix 
\begin{center}
$C = \left( \begin{array}{cc}
0 & A \\
A^H & 0\\
\end{array} \right)$ 
\end{center}
are $\pm \sigma_1, \cdots, \pm \sigma_n$, corresponding to the eigenvectors $\left( \begin{array}{c} u_i \\ \pm v_i \end{array} \right)$, $i=1, \cdots, n$, where $u_i$ and $v_i$ are the $i$-th columns of $U$ and $V$, respectively. $C$ also has $m-n$ zero eigenvalues with eigenvectors $\left( \begin{array}{c} u_i \\ 0 \end{array} \right)$, $i=n+1, \cdots, m$.
\end{theorem}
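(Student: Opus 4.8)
The plan is to verify by direct substitution that the vectors exhibited in the statement are eigenvectors of $C$ with the claimed eigenvalues, and then to count dimensions: since $C$ is Hermitian it possesses an orthonormal eigenbasis of size $m+n$, so once $m+n$ mutually orthogonal eigenvectors have been produced the list is complete and no other eigenvalue can occur. First I would rewrite the singular value decomposition $U^H A V = \left( \begin{smallmatrix} \Sigma & 0 \\ 0 & 0 \end{smallmatrix} \right)$ column by column: writing $u_i$ and $v_i$ for the columns of $U$ and $V$, it is equivalent to $A v_i = \sigma_i u_i$ and $A^H u_i = \sigma_i v_i$ for $i = 1,\dots,n$, together with $A^H u_i = 0$ for $i = n+1,\dots,m$ (all three identities are read off directly from the block form).

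Next I would perform the substitution. For $i \le n$,
\[
C \left( \begin{array}{c} u_i \\ \pm v_i \end{array} \right) = \left( \begin{array}{c} \pm A v_i \\ A^H u_i \end{array} \right) = \left( \begin{array}{c} \pm \sigma_i u_i \\ \sigma_i v_i \end{array} \right) = \pm \sigma_i \left( \begin{array}{c} u_i \\ \pm v_i \end{array} \right),
\]
so each $\left( \begin{smallmatrix} u_i \\ \pm v_i \end{smallmatrix} \right)$ is an eigenvector with eigenvalue $\pm\sigma_i$; and for $i > n$ one gets $C \left( \begin{smallmatrix} u_i \\ 0 \end{smallmatrix} \right) = \left( \begin{smallmatrix} 0 \\ A^H u_i \end{smallmatrix} \right) = 0$, producing the $m-n$ zero eigenvalues. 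This exhibits $2n + (m-n) = m+n$ candidate eigenvectors.

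For completeness I would check that these $m+n$ vectors are linearly independent; since they are eigenvectors of a Hermitian matrix, $m+n$ independent ones form a full eigenbasis, so the spectrum is exactly $\{\pm\sigma_1,\dots,\pm\sigma_n\}$ together with $m-n$ zeros. Independence follows from orthogonality, which is immediate from the unitarity of $U$ and $V$: for $i,j \le n$ and signs $\varepsilon,\varepsilon' \in \{1,-1\}$ one has $\left( \begin{smallmatrix} u_i \\ \varepsilon v_i \end{smallmatrix} \right)^H \left( \begin{smallmatrix} u_j \\ \varepsilon' v_j \end{smallmatrix} \right) = u_i^H u_j + \varepsilon\varepsilon'\, v_i^H v_j$, which vanishes for $i \ne j$ and equals $1-1 = 0$ for $i = j$, $\varepsilon \ne \varepsilon'$; and each $\left( \begin{smallmatrix} u_i \\ 0 \end{smallmatrix} \right)$ with $i > n$ is orthogonal to every $\left( \begin{smallmatrix} u_j \\ \pm v_j \end{smallmatrix} \right)$ and to $\left( \begin{smallmatrix} u_k \\ 0 \end{smallmatrix} \right)$, $k \ne i$, by orthonormality of the columns of $U$.

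I do not expect a genuine obstacle here; the only point calling for care is the bookkeeping of multiplicities when singular values coincide or vanish. Coincidences $\sigma_i = \sigma_j$ cause no difficulty, as the associated eigenvectors remain orthogonal by the computation above; a zero singular value contributes the eigenvalue $0$ through both $\left( \begin{smallmatrix} u_i \\ v_i \end{smallmatrix} \right)$ and $\left( \begin{smallmatrix} u_i \\ -v_i \end{smallmatrix} \right)$ \emph{in addition} to the $m-n$ structural zeros, so the total multiplicity of $0$ must be tallied consistently. A more conceptual alternative is to observe $C^2 = \mathrm{diag}(AA^H, A^HA)$, whose eigenvalues are the $\sigma_i^2$ and zeros, so that the real eigenvalues of the Hermitian $C$ are $\pm$ square roots of these; but pinning down signs and multiplicities still comes back to the eigenvector identity above, so the direct verification is the cleanest route.
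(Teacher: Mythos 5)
The paper does not actually prove this theorem: it is stated as an imported result, credited to Theorem~4.2 in Chapter~1 of Stewart and Sun, so there is no in-paper argument to compare against. Your direct-verification proof is correct and self-contained. Reading the SVD column by column to get $A v_i = \sigma_i u_i$, $A^H u_i = \sigma_i v_i$ (for $i \le n$) and $A^H u_i = 0$ (for $i > n$), substituting into $C$, and then counting $2n + (m-n) = m+n$ pairwise orthogonal eigenvectors to exhaust the spectrum of a Hermitian matrix is exactly the standard route, and you handle the two delicate points carefully: orthogonality does not rely on distinctness of the $\sigma_i$, and a vanishing $\sigma_i$ merges $+\sigma_i$ and $-\sigma_i$ into the eigenvalue $0$ without collapsing the two orthogonal eigenvectors $\left( \begin{smallmatrix} u_i \\ v_i \end{smallmatrix} \right)$ and $\left( \begin{smallmatrix} u_i \\ -v_i \end{smallmatrix} \right)$, so the multiplicity count stays consistent. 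One small stylistic remark: the displayed eigenvectors have norm $\sqrt{2}$ rather than $1$, which is harmless for the orthogonality and completeness argument but worth saying explicitly if one wants an orthonormal eigenbasis.
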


\begin{theorem}\label{th:indiv_bounds3x3}
Let $\omega_i, \ i =1, \dots, (N+1)n+p$ be the $i$-th value in $\{\psi_k, \rho_j | k=1,\dots, (N+1)n, \ j=1,\dots, p \}$ (the set of eigenvalues of $\bold{D}$ and $\bold{R}$). Then the $k$-th eigenvalue of $\mathcal{A}_3$ is bounded by
\begin{align*}
\text{positive eigenvalues:}  \quad & \omega_{k} - \theta_{max} \leq \gamma_k \leq \omega_{k} + \theta_{max}, \quad & k=1, \dots, (N+1)n+p, \\ 
\text{negative eigenvalues: }\quad & - \theta_{max} \leq \gamma_{k+(N+1)n+p} < 0, \quad & k=1,\dots, (N+1)n.
\end{align*}
\end{theorem}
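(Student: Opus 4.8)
The plan is to write $\mathcal{A}_3$ as a sum of a block-diagonal matrix with known spectrum and an off-diagonal matrix to which the Jordan--Wielandt theorem (Theorem~\ref{th:jordan-wielandt}) applies, and then to invoke Weyl's inequality (Theorem~\ref{th:eig_bounds}). Set $\mathcal{P}=\mathrm{diag}(\bold{D},\bold{R},\bold{0})$ and $\mathcal{Q}=\mathcal{A}_3-\mathcal{P}$, so that $\mathcal{Q}$ has zero diagonal blocks and off-diagonal blocks $\bL,\bold{H},\bL^T,\bold{H}^T$; both $\mathcal{P}$ and $\mathcal{Q}$ are symmetric of order $2(N+1)n+p$.

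First I would record the two spectra. Since $\bold{D}$ and $\bold{R}$ are symmetric positive definite, the eigenvalues of $\mathcal{P}$ are the $(N+1)n+p$ positive numbers $\psi_1,\dots,\psi_{(N+1)n},\rho_1,\dots,\rho_p$, which sorted decreasingly are exactly $\omega_1,\dots,\omega_{(N+1)n+p}$, together with a zero eigenvalue of multiplicity $(N+1)n$; hence $\lambda_k(\mathcal{P})=\omega_k$ for $k\le(N+1)n+p$ and $\lambda_k(\mathcal{P})=0$ otherwise. Grouping the first two block rows and columns of $\mathcal{Q}$ gives the form $\left(\begin{smallmatrix}\bold{0}&\bold{K}\\ \bold{K}^T&\bold{0}\end{smallmatrix}\right)$ with $\bold{K}=(\bL^T\ \bold{H}^T)^T\in\mathbb{R}^{((N+1)n+p)\times(N+1)n}$, whose singular values are $\theta_1,\dots,\theta_{(N+1)n}$. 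Applying Theorem~\ref{th:jordan-wielandt} with $m=(N+1)n+p\ge n=(N+1)n$, the eigenvalues of $\mathcal{Q}$ are $\pm\theta_1,\dots,\pm\theta_{(N+1)n}$ together with $p$ zeros; in particular $\lambda_{max}(\mathcal{Q})=\theta_{max}$ and $\lambda_{min}(\mathcal{Q})=-\theta_{max}$, where $\theta_{max}>0$ because $\bL$ is nonsingular and so $\bold{K}$ has full column rank.

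Then Theorem~\ref{th:eig_bounds} applied with $A=\mathcal{P}$, $C=\mathcal{Q}$ gives $\lambda_k(\mathcal{P})-\theta_{max}\le\gamma_k\le\lambda_k(\mathcal{P})+\theta_{max}$ for every $k$. For $k\le(N+1)n+p$ this is precisely the claimed bound $\omega_k-\theta_{max}\le\gamma_k\le\omega_k+\theta_{max}$; for $k>(N+1)n+p$ it yields only $-\theta_{max}\le\gamma_k\le\theta_{max}$. To sharpen the latter into the one-sided statement I would recall the congruence transformation of Section~\ref{sec:3x3bounds} and Sylvester's law of inertia, which show $\mathcal{A}_3$ has exactly $(N+1)n+p$ positive and $(N+1)n$ negative eigenvalues; thus $\gamma_{(N+1)n+p+1},\dots,\gamma_{2(N+1)n+p}<0$, and re-indexing $k\mapsto k+(N+1)n+p$ gives $-\theta_{max}\le\gamma_{k+(N+1)n+p}<0$ for $k=1,\dots,(N+1)n$.

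The only delicate points are bookkeeping: verifying that $\lambda_k(\mathcal{P})$ is the decreasingly sorted merge $\omega_k$ of the eigenvalues of $\bold{D}$ and $\bold{R}$ (rather than the two lists taken separately), that the block dimensions in the Jordan--Wielandt application are such that $\mathcal{Q}$ contributes precisely $p$ additional zero eigenvalues, and that the top of the spectrum of $\mathcal{Q}$ is $\theta_{max}$ and not $0$. No step requires a genuine estimate: the theorem is an assembly of Theorems~\ref{th:eig_bounds} and~\ref{th:jordan-wielandt} together with the inertia count already established for $\mathcal{A}_3$.
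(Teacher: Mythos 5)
Your proof is correct and follows essentially the same route as the paper: decompose $\mathcal{A}_3$ into the block-diagonal part $\mathrm{diag}(\bold{D},\bold{R},\bold{0})$ plus the off-diagonal part, apply the Jordan--Wielandt theorem to identify the spectrum of the off-diagonal part as $\{\pm\theta_i\}$ together with $p$ zeros, and then invoke Weyl's inequality (Theorem~\ref{th:eig_bounds}). You are slightly more explicit than the paper about the bookkeeping: you spell out that $\lambda_k(\mathcal{P})$ equals the decreasingly sorted merged list $\omega_k$ followed by $(N+1)n$ zeros, and you note that Weyl alone only yields $-\theta_{max}\le\gamma_{k+(N+1)n+p}\le\theta_{max}$ on the negative block, so that the strict inequality $\gamma_{k+(N+1)n+p}<0$ actually needs the inertia count from Section~\ref{sec:3x3bounds}; the paper leaves this last step implicit. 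That extra sentence is a genuine improvement in rigor, but it is the same argument.
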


\begin{proof}
We can write $\mathcal{A}_3$ as a sum of two symmetric matrices:
\begin{equation*}
\mathcal{A}_3  = \left( \begin{array}{ccc}
\bold{D} & \bold{0} & \bold{L} \\
\bold{0} & \bold{R} & \bold{H} \\
\bold{L}^T & \bold{H}^T & \bold{0} \\
\end{array} \right) =
\left( \begin{array}{ccc}
\bold{D} & \bold{0} & \bold{0} \\
\bold{0} & \bold{R} & \bold{0} \\
\bold{0} & \bold{0} & \bold{0} \\
\end{array} \right) +
\left( \begin{array}{ccc}
\bold{0} & \bold{0} & \bold{L} \\
\bold{0} & \bold{0} & \bold{H} \\
\bold{L}^T & \bold{H}^T & \bold{0} \\
\end{array} \right) = \bold{S}^{3x3}_D + \bold{S}^{3x3}_L.
\end{equation*}

The spectrum of $\bold{S}^{3x3}_D$ is the union of the eigenvalues of $\bold{D}$, $\bold{R}$ and zeros. By Theorem~\ref{th:jordan-wielandt}, the eigenvalues $\lambda$ of the indefinite matrix $\bold{S}^{3x3}_L$ are the singular values of $(\bL^T \ \bold{H}^T)$ with plus and minus signs, thus $\lambda_{min} = -\theta_{max}$ and $\lambda_{max} =\theta_{max}$.

The result follows from applying Theorem~\ref{th:eig_bounds} to the matrices $\bold{S}^{3x3}_D$ and $\bold{S}^{3x3}_L$.
\end{proof}

\begin{theorem}\label{th:indiv_bounds2x2}
The eigenvalues of $\mathcal{A}_2$ are bounded by
\begin{align}
\text{positive eigenvalues:}  \quad & \psi_k - \sigma_{max} \leq \zeta_k \leq \psi_k + \sigma_{max}, \quad & k=1, \dots, (N+1)n. \nonumber \\
\text{negative eigenvalues: }  \quad & -\nu_{k} - \sigma_{max} \leq \zeta_{k+(N+1)n} \leq -\nu_{k} + \sigma_{max}, \quad & k=1, \dots, (N+1)n, \label{eq:bound_neg}
\end{align}
\end{theorem}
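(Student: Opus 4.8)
The plan is to mirror the proof of Theorem~\ref{th:indiv_bounds3x3}: express $\mathcal{A}_2$ as the sum of a block-diagonal symmetric matrix whose spectrum is known explicitly and an off-diagonal symmetric matrix that is a Jordan--Wielandt companion of $\bL$, and then apply Weyl's inequality (Theorem~\ref{th:eig_bounds}).

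First I would split $\mathcal{A}_2 = \bold{S}^{2x2}_D + \bold{S}^{2x2}_L$, where $\bold{S}^{2x2}_D = \left( \begin{array}{cc} \bold{D} & \bold{0} \\ \bold{0} & -\bold{H}^T \bold{R}^{-1} \bold{H} \end{array} \right)$ and $\bold{S}^{2x2}_L = \left( \begin{array}{cc} \bold{0} & \bL \\ \bL^T & \bold{0} \end{array} \right)$, both of which are symmetric. The spectrum of $\bold{S}^{2x2}_D$ is the union of the eigenvalues $\psi_1, \dots, \psi_{(N+1)n}$ of $\bold{D}$ and the eigenvalues $-\nu_1, \dots, -\nu_{(N+1)n}$ of $-\bold{H}^T \bold{R}^{-1} \bold{H}$; since $\bold{D}$ is positive definite and $\bold{H}^T \bold{R}^{-1} \bold{H}$ is positive semidefinite, the $\psi_k$ are precisely the $(N+1)n$ largest eigenvalues of $\bold{S}^{2x2}_D$ and the $-\nu_k$ the $(N+1)n$ smallest, which is also consistent with the inertia of $\mathcal{A}_2$ established earlier via Sylvester's law. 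For $\bold{S}^{2x2}_L$ I would apply Theorem~\ref{th:jordan-wielandt} with $A = \bL$: since $\bL$ is square, its eigenvalues are exactly $\pm \sigma_1, \dots, \pm \sigma_{(N+1)n}$ with no additional zero eigenvalues, so $\lambda_{min}(\bold{S}^{2x2}_L) = -\sigma_{max}$ and $\lambda_{max}(\bold{S}^{2x2}_L) = \sigma_{max}$.

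Theorem~\ref{th:eig_bounds} then gives $\lambda_k(\bold{S}^{2x2}_D) - \sigma_{max} \le \lambda_k(\mathcal{A}_2) \le \lambda_k(\bold{S}^{2x2}_D) + \sigma_{max}$ for all $k$. Taking $k = 1, \dots, (N+1)n$ produces the stated bound for the positive eigenvalues (there $\lambda_k(\bold{S}^{2x2}_D) = \psi_k$), and taking the bottom $(N+1)n$ indices produces the bound for the negative eigenvalues (there $\lambda_k(\bold{S}^{2x2}_D)$ runs through the $-\nu_k$). The only point requiring care, and the one I expect to be the main obstacle, is the index bookkeeping: one must verify, under the ordering convention of Table~\ref{tab:notation_eigv_sv}, that the negative eigenvalue $\zeta_{k+(N+1)n}$ of $\mathcal{A}_2$ is paired with $-\nu_k$ (re-indexing the $\nu_k$ if the decreasing-order convention requires it), and note that the resulting intervals are still valid bounds even though, when $\sigma_{max}$ is large, they may straddle zero and hence fail to be sharp. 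Beyond this accounting I do not anticipate any genuine difficulty, since the argument is a direct transcription of the $3 \times 3$ case.
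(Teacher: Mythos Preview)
Your proposal is correct and matches the paper's proof essentially line for line: the paper also writes $\mathcal{A}_2 = \bold{S}^{2x2}_D + \bold{S}^{2x2}_L$ with the same two summands you define, and then simply says the rest is analogous to Theorem~\ref{th:indiv_bounds3x3}. Your extra remarks about the index bookkeeping and the intervals possibly straddling zero are accurate side observations that the paper does not spell out.
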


\begin{proof}
As in Theorem~\ref{th:indiv_bounds3x3}, we  express $\mathcal{A}_2$ as a sum of two symmetric matrices
\begin{equation*}
\mathcal{A}_2 =  \left( \begin{array}{cc}
\bold{D}  & \bold{0} \\
\bold{0} & -\bold{H}^T  \bold{R}^{-1} \bold{H}
\end{array} \right) +
\left( \begin{array}{cc}
\bold{0}  & \bL \\
\bL^T & \bold{0}
\end{array} \right) = \bold{S}^{2x2}_D + \bold{S}^{2x2}_L.
\end{equation*} 
The rest of the proof is analogous to that of Theorem~\ref{th:indiv_bounds3x3}.
\end{proof}

\begin{cor}\label{cor:distinct_eig}
If there are $p<(N+1)n$ observations, \eqref{eq:bound_neg} in Theorem \ref{th:indiv_bounds2x2} becomes
\begin{align*}
  - \sigma_{max} \leq & \zeta_{k+(N+1)n} \leq 0, \quad &  k=1, \dots, (N+1)n-p, \\
 -\nu_{k} - \sigma_{max} \leq &  \zeta_{k+2(N+1)n-p} <  -\nu_{k} + \sigma_{max}, \quad & k = 1, \dots, p.
\end{align*}
\end{cor}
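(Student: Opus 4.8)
The plan is to obtain Corollary~\ref{cor:distinct_eig} as a direct specialisation of the individual eigenvalue bounds established in Theorem~\ref{th:indiv_bounds2x2}, exploiting the fact that $\bold{H}^T\bold{R}^{-1}\bold{H}$ is rank deficient as soon as $p<(N+1)n$.

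First I would record two structural facts. Since $\bold{R}$ (and hence $\bold{R}^{-1}$) is symmetric positive definite, $\bold{H}^T\bold{R}^{-1}\bold{H}$ is symmetric positive semidefinite with $\operatorname{rank}(\bold{H}^T\bold{R}^{-1}\bold{H})=\operatorname{rank}(\bold{H})\le p$. Therefore, when $p<(N+1)n$, at least $(N+1)n-p$ of its eigenvalues $\nu_i$ are zero, and because the matrix is positive semidefinite these are precisely its $(N+1)n-p$ smallest eigenvalues; in the ordering used for \eqref{eq:bound_neg} this means $\nu_1=\dots=\nu_{(N+1)n-p}=0$, while the (possibly) nonzero eigenvalues occupy the remaining $p$ slots, which I would relabel $\nu_1,\dots,\nu_p$. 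Secondly, the congruence transformation of Section~\ref{sec:2x2bounds} shows that $\mathcal{A}_2$ is non-singular with exactly $(N+1)n$ negative eigenvalues, so every $\zeta$ on the left of \eqref{eq:bound_neg} is strictly negative.

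Then I would substitute into \eqref{eq:bound_neg}. For the $(N+1)n-p$ indices with $\nu_k=0$, the bracket $[-\nu_k-\sigma_{max},\,-\nu_k+\sigma_{max}]$ collapses to $[-\sigma_{max},\sigma_{max}]$; intersecting with $\zeta_{k+(N+1)n}<0$ yields $-\sigma_{max}\le\zeta_{k+(N+1)n}\le 0$, the first line of the corollary. For the remaining $p$ indices --- which, once the $(N+1)n-p$ near-zero eigenvalues have been split off, are the $p$ most negative eigenvalues of $\mathcal{A}_2$ and hence carry the indices $k+2(N+1)n-p$, $k=1,\dots,p$ --- the bound \eqref{eq:bound_neg} applies verbatim with the relabelled nonzero $\nu_k$, giving $-\nu_k-\sigma_{max}\le\zeta_{k+2(N+1)n-p}\le -\nu_k+\sigma_{max}$. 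The upper inequality may be taken strict: when $-\nu_k+\sigma_{max}\ge 0$ this is immediate from non-singularity of $\mathcal{A}_2$, and otherwise one checks that the bound of Theorem~\ref{th:eig_bounds} invoked in the proof of Theorem~\ref{th:indiv_bounds2x2} is not attained, since by Theorem~\ref{th:jordan-wielandt} attainment would force a common eigenvector of the block-diagonal and off-diagonal summands of $\mathcal{A}_2$, and hence a zero singular value of $\bold{L}$, whereas $\bold{L}$ is block lower triangular with identity diagonal blocks and therefore non-singular.

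The only genuinely delicate point is the index book-keeping in the middle step: one must align the decreasing list of eigenvalues of $\mathcal{A}_2$ with the decreasing list of eigenvalues of the block-diagonal part $\operatorname{diag}(\bold{D},-\bold{H}^T\bold{R}^{-1}\bold{H})$ used in the proof of Theorem~\ref{th:indiv_bounds2x2}, and verify that the $(N+1)n-p$ zero eigenvalues of the latter are exactly those whose $\pm\sigma_{max}$ perturbation produces the eigenvalues indexed $(N+1)n+1,\dots,2(N+1)n-p$, while the $p$ eigenvalues $-\nu_k$ produce those indexed $2(N+1)n-p+1,\dots,2(N+1)n$. Everything else is a mechanical substitution into \eqref{eq:bound_neg}.
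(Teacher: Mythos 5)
Your proof is correct and follows the same route as the paper's one-line argument: since $\bold{H}^T\bold{R}^{-1}\bold{H}$ has rank at most $p$, at least $(N+1)n-p$ of the $\nu_i$ vanish; substituting these zeros into \eqref{eq:bound_neg} gives $[-\sigma_{max},\sigma_{max}]$, which you tighten to $[-\sigma_{max},0]$ by invoking the inertia count (all $(N+1)n$ of the $\zeta_{k+(N+1)n}$ are strictly negative), and the remaining $p$ indices carry the nonzero $\nu_k$ after relabelling. The index bookkeeping you flag is real --- the paper is silent about aligning the ordered spectrum of $\operatorname{diag}(\bold{D},-\bold{H}^T\bold{R}^{-1}\bold{H})$ with the ordered $\nu_k$, and it requires exactly the relabelling you describe. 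Where you go beyond the paper is the strict inequality on the second line: the paper's proof does not address it at all. Your argument for it is in fact sound --- equality in the Weyl bound $\lambda_k(A+C)\le\lambda_k(A)+\lambda_{max}(C)$ of Theorem~\ref{th:eig_bounds} does force a common eigenvector of $A$ and $C$ (take a unit vector in the intersection of the span of the top-$k$ eigenvectors of $A+C$ with the span of the bottom-$(n-k+1)$ eigenvectors of $A$; saturation of both inequalities pins it to the relevant eigenspaces of $A$, $C$, and $A+C$ simultaneously). Here such a common eigenvector is impossible because, by Theorem~\ref{th:jordan-wielandt}, every eigenvector of $\bold{S}^{2x2}_L$ for $\pm\sigma_{max}>0$ has both block components nonzero ($\bold{L}$ being unit lower block triangular and hence invertible), while the eigenvectors of $\bold{S}^{2x2}_D$ --- whose two diagonal blocks have disjoint spectra, one positive definite and the other negative semidefinite --- are supported on a single block. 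So your write-up is not merely consistent with the paper's proof; it supplies a justification for the strict inequality that the paper omits.
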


\begin{proof}
The result follows from noticing that $-\bold{H}^T  \bold{R}^{-1} \bold{H}$ has $(N+1)n - p$ zero eigenvalues.
\end{proof}

\bibliographystyle{acm}
\bibliography{wileyNJD-VANCOUVER}

\begin{thebibliography}{10}

\bibitem{Andersson2010}
{\sc Andersson, E., and Th\'{e}paut, J.-N.}
\newblock Assimilation of operational data.
\newblock In {\em Data Assimilation. Making Sense of Observations}, W.~Lahoz,
  B.~Khattatov, and R.~Menard, Eds. Springer-Verlag Berlin Heidelberg, 2010,
  pp.~283 -- 299.

\bibitem{Axelsson2006}
{\sc Axelsson, O., and Neytcheva, M.}
\newblock Eigenvalue estimates for preconditioned saddle point matrices.
\newblock {\em Numerical Linear Algebra with Applications 13}, 4 (2006), 339 --
  360.

\bibitem{Benzi2005}
{\sc Benzi, M., Golub, G.~H., and Liesen, J.}
\newblock Numerical solution of saddle point problems.
\newblock {\em Acta Numerica 14\/} (2005), 1 -- 137.

\bibitem{Bergamaschi07}
{\sc Bergamaschi, L., Gondzio, J., Venturin, M., and Zilli, G.}
\newblock Inexact constraint preconditioners for linear systems arising in
  interior point methods.
\newblock {\em Computational Optimization and Applications 36}, 2 (2007), 137
  -- 147.

\bibitem{Bonavita2017}
{\sc Bonavita, M., Tr{\'e}molet, Y., H{\'o}lm, E., Lang, S. T.~K., Chrust, M.,
  Janiskova, M., Lopez, P., Laloyaux, P., de~Rosnay, P., Fisher, M., Hamrud,
  M., and English, S.}
\newblock A strategy for data assimilation.
\newblock {\em ECMWF Technical Memoranda}, 800 (2017).

\bibitem{Butcher87}
{\sc Butcher, J.~C.}
\newblock {\em The Numerical Analysis of Ordinary Differential Equations:
  Runge-Kutta and General Linear Methods}.
\newblock Wiley-Interscience, 1987.

\bibitem{CHEN2013}
{\sc Chen, H., Yang, D., Hong, Y., Gourley, J.~J., and Zhang, Y.}
\newblock Hydrological data assimilation with the {E}nsemble
  {S}quare-{R}oot-{F}ilter: {U}se of streamflow observations to update model
  states for real-time flash flood forecasting.
\newblock {\em Advances in Water Resources 59\/} (2013), 209 -- 220.

\bibitem{Courtier1994}
{\sc Courtier, P., Th\'{e}paut, J.-N., and Hollingsworth, A.}
\newblock A strategy for operational implementation of {4D-Var}, using an
  incremental approach.
\newblock {\em Quarterly Journal of the Royal Meteorological Society 120}, 519
  (1994), 1367 -- 1387.

\bibitem{Daley91}
{\sc Daley, R.}
\newblock {\em Atmospheric Data Analysis}, vol.~2.
\newblock Cambridge University Press, 1993.

\bibitem{AES_thesis}
{\sc El-Said, A.}
\newblock {\em Conditioning of the weak-constraint variational data
  assimilation problem for numerical weather prediction}.
\newblock PhD thesis, Department of Mathematics and Statistics, University of
  Reading, 2015.
\newblock PhD thesis.

\bibitem{Elbern1997}
{\sc Elbern, H., Schmidt, H., and Ebel, A.}
\newblock Variational data assimilation for tropospheric chemistry modeling.
\newblock {\em Journal of Geophysical Research: Atmospheres 102}, D13 (1997),
  15967 -- 15985.

\bibitem{Fisher98}
{\sc Fisher, M.}
\newblock Minimization algorithms for variational data assimilation.
\newblock In {\em Proceedings of the Seminar on Recent Developments in
  Numerical Methods for Atmospheric Modelling\/} (1998), European Centre for
  Medium Range Weather Forecasts, pp.~364 -- 385.

\bibitem{Fisher17}
{\sc Fisher, M., and G{\"u}rol, S.}
\newblock Parallelisation in the time dimension of four-dimensional variational
  data assimilation.
\newblock {\em Quarterly Journal of the Royal Meteorological Society 143}, 703
  (2017), 1136 -- 1147.

\bibitem{Freitag2018}
{\sc Freitag, M.~A., and Green, D.~L.}
\newblock A low-rank approach to the solution of weak constraint variational
  data assimilation problems.
\newblock {\em Journal of Computational Physics 357\/} (2018), 263 -- 281.

\bibitem{Golub13}
{\sc Golub, G.~H., and {Van Loan}, C.~F.}
\newblock {\em Matrix Computations}, 4th~ed.
\newblock JHU Press, 2013.

\bibitem{Gratton2018}
{\sc Gratton, S., G{\"{u}}rol, S., Simon, E., and Toint, P.~L.}
\newblock Guaranteeing the convergence of the saddle formulation for
  weakly-constrained 4{D}-{V}ar data assimilation.
\newblock {\em Quarterly Journal of the Royal Meteorological Society 144}, 717
  (2018), 2592 -- 2602.

\bibitem{Gratton2007}
{\sc Gratton, S., Lawless, A., and Nichols, N.}
\newblock Approximate gauss-newton methods for nonlinear least squares
  problems.
\newblock {\em SIAM Journal on Optimization 18}, 1 (2007), 106 -- 132.

\bibitem{Greenbaum1996}
{\sc Greenbaum, A., Pt\'{a}k, V., and Strako\v{s}, Z.}
\newblock Any nonincreasing convergence curve is possible for {G}{M}{R}{E}{S}.
\newblock {\em SIAM Journal on Matrix Analysis and Applications 17}, 3 (1996),
  465 -- 469.

\bibitem{Greif2014}
{\sc Greif, C., Moulding, E., and Orban, D.}
\newblock Bounds on eigenvalues of matrices arising from interior-point
  methods.
\newblock {\em SIAM Journal on Optimization 24}, 1 (2014), 49 -- 83.

\bibitem{Griffith2000}
{\sc Griffith, A., and Nichols, N.}
\newblock Adjoint methods in data assimilation for estimating model error.
\newblock {\em Flow, Turbulence and Combustion 65}, 3 - 4 (2000), 469 -- 488.

\bibitem{Hestenes52}
{\sc Hestenes, M.~R., and Stiefel, E.}
\newblock Methods of conjugate gradients for solving linear systems.
\newblock {\em Journal of Research of the National Bureau of Standards 49}, 6
  (1952), 409 -- 436.

\bibitem{Janjic2018}
{\sc Janji\'{c}, T., Bormann, N., Bocquet, M., Carton, J.~A., Cohn, S.~E.,
  Dance, S.~L., Losa, S.~N., Nichols, N.~K., Potthast, R., Waller, J.~A., and
  Weston, P.}
\newblock On the representation error in data assimilation.
\newblock {\em Quarterly Journal of the Royal Meteorological Society 144}, 713
  (2018), 1257 -- 1278.

\bibitem{Johnson2005}
{\sc Johnson, C., Hoskins, B.~J., and Nichols, N.~K.}
\newblock A singular vector perspective of 4{D}-{V}ar: {F}iltering and
  interpolation.
\newblock {\em Quarterly Journal of the Royal Meteorological Society 131}, 605
  (2005), 1 -- 19.

\bibitem{Kalnay2002}
{\sc Kalnay, E.}
\newblock {\em Atmospheric Modeling, Data Assimilation and Predictability}.
\newblock Cambridge University Press, 2002.

\bibitem{Lawless2013}
{\sc Lawless, A.}
\newblock Variational data assimilation for very large environmental problems.
\newblock In {\em Large Scale Inverse Problems: Computational Methods and
  Applications in the Earth Sciences}, M.~Cullen, M.~A. Freitag, S.~Kindermann,
  and R.~Scheichl, Eds., Radon Series on Computational and Applied Mathematics
  13. De Gruyter, 2013, pp.~55 -- 90.

\bibitem{Lorenz96}
{\sc Lorenz, E.}
\newblock Predictability - a problem partly solved.
\newblock In {\em Proceedings of the Seminar on Predictability\/} (1996),
  vol.~1, European Centre for Medium Range Weather Forecasts, pp.~1 -- 18.

\bibitem{Morini2016}
{\sc Morini, B., Simoncini, V., and Tani, M.}
\newblock Spectral estimates for unreduced symmetric {K}{K}{T} systems arising
  from interior point methods.
\newblock {\em Numerical Linear Algebra with Applications 23}, 5 (2016), 776 --
  800.

\bibitem{Morini2017}
{\sc Morini, B., Simoncini, V., and Tani, M.}
\newblock A comparison of reduced and unreduced {K}{K}{T} systems arising from
  interior point methods.
\newblock {\em Computational Optimization and Applications 68}, 1 (2017), 1 --
  27.

\bibitem{Moye2018}
{\sc Moye, M.~J., and Diekman, C.~O.}
\newblock Data assimilation methods for neuronal state and parameter
  estimation.
\newblock {\em The Journal of Mathematical Neuroscience 8}, 11 (2018).

\bibitem{Nichols2010}
{\sc Nichols, N.~K.}
\newblock Mathematical concepts of data assimilation.
\newblock In {\em Data Assimilation. Making Sense of Observations}, W.~Lahoz,
  B.~Khattatov, and R.~Menard, Eds. Springer-Verlag Berlin Heidelberg, 2010,
  pp.~13 -- 39.

\bibitem{Nocedal06}
{\sc Nocedal, J., and Wright, S.~J.}
\newblock {\em Numerical Optimization}, 2nd~ed.
\newblock World Scientific, 2006.

\bibitem{Paige1975}
{\sc Paige, C.~C., and Saunders, M.~A.}
\newblock Solution of sparse indefinite systems of linear equations.
\newblock {\em SIAM Journal on Numerical Analysis 12}, 4 (1975), 617 -- 629.

\bibitem{Rood2010}
{\sc Rood, R.}
\newblock The role of the model in the data assimilation system.
\newblock In {\em Data Assimilation. Making Sense of Observations}, W.~Lahoz,
  B.~Khattatov, and R.~Menard, Eds. Springer-Verlag Berlin Heidelberg, 2010,
  pp.~351 -- 379.

\bibitem{RustenWinther1992}
{\sc Rusten, T., and Winther, R.}
\newblock A preconditioned iterative method for saddlepoint problems.
\newblock {\em SIAM Journal on Matrix Analysis and Applications 13}, 3 (1992),
  887 -- 904.

\bibitem{SaadBook}
{\sc Saad, Y.}
\newblock {\em Iterative Methods for Sparse Linear Systems}, 2nd~ed.
\newblock Society for Industrial and Applied Mathematics, 2003.

\bibitem{Saad1986}
{\sc Saad, Y., and Schultz, M.~H.}
\newblock {G}{M}{R}{E}{S}: A generalized minimal residual algorithm for solving
  nonsymmetric linear systems.
\newblock {\em SIAM Journal on Scientific and Statistical Computing 7}, 3
  (1986), 856 -- 869.

\bibitem{SilvesterWathen1994}
{\sc Silvester, D., and Wathen, A.}
\newblock Fast iterative solution of stabilised {S}tokes systems. {P}art
  {I}{I}: using general block preconditioners.
\newblock {\em SIAM Journal on Numerical Analysis 31}, 5 (1994), 1352 -- 1367.

\bibitem{Silvester2000}
{\sc Silvester, J.}
\newblock Determinants of block matrices.
\newblock {\em Mathematical Gazette 84}, 501 (2000), 460 -- 467.

\bibitem{SimonciniSzyld2013}
{\sc Simoncini, V., and Szyld, D.~B.}
\newblock On the superlinear sonvergence of {M}{I}{N}{R}{E}{S}.
\newblock In {\em {N}umerical {M}athematics and {A}dvanced {A}pplications 2011.
  Proceedings of ENUMATH 2011\/} (2013), A.~Cangiani, R.~Davidchack,
  E.~Georgoulis, A.~Gorban, J.~Levesley, and M.~Tretyakov, Eds., Springer,
  pp.~733 -- 740.

\bibitem{Stewart1990}
{\sc Stewart, G., and Sun, J.-G.}
\newblock {\em Matrix Perturbation Theory}.
\newblock Computer Science and Scientific Computing. Academic Press, 1990.

\bibitem{Swinbank2010}
{\sc Swinbank, R.}
\newblock Numerical weather prediction.
\newblock In {\em Data Assimilation. Making Sense of Observations}, W.~Lahoz,
  B.~Khattatov, and R.~Menard, Eds. Springer-Verlag Berlin Heidelberg, 2010,
  pp.~381 -- 406.

\bibitem{Talagrand2010}
{\sc Talagrand, O.}
\newblock Variational assimilation.
\newblock In {\em Data Assimilation. Making Sense of Observations}, W.~Lahoz,
  B.~Khattatov, and R.~Menard, Eds. Springer-Verlag Berlin Heidelberg, 2010,
  pp.~41 -- 68.

\bibitem{TrefethenBau}
{\sc Trefethen, L.~N., and {Bau, III}, D.}
\newblock {\em Numerical Linear Algebra}.
\newblock SIAM, 1997.

\bibitem{Tremolet06}
{\sc Tr{\'{e}}molet, Y.}
\newblock Accounting for an imperfect model in 4{D}-{V}ar.
\newblock {\em Quarterly Journal of the Royal Meteorological Society 132}, 621
  (2006), 2483 -- 2504.

\bibitem{Tremolet07}
{\sc Tr{\'{e}}molet, Y.}
\newblock Model-error estimation in 4{D}-{V}ar.
\newblock {\em Quarterly Journal of the Royal Meteorological Society 133}, 626
  (2007), 1267 -- 1280.

\end{thebibliography}
\end{document}